\documentclass[12pt]{amsart}

\usepackage{graphics}
\usepackage{color}
\usepackage[a4paper, margin=3cm]{geometry}

\usepackage{amssymb,enumerate}
\usepackage{amsmath}
\usepackage{bbm}           
\usepackage{bm}
\usepackage{eso-pic,graphicx}
\usepackage{tikz}
\usepackage{cite}
\usepackage{esint}
\usepackage[colorlinks=true, pdfstartview=FitV, linkcolor=blue, citecolor=blue, urlcolor=blue]{hyperref}
\usepackage{booktabs}
\usepackage{graphicx}
\usepackage{amsmath,amsfonts}
\usepackage{rotating}
\usepackage{amssymb}
\usepackage{verbatim}
\usepackage{rotating}
\usepackage{mathrsfs}

\makeatletter
\def\Ddots{\mathinner{\mkern1mu\raise\p@
		\vbox{\kern7\p@\hbox{.}}\mkern2mu
		\raise4\p@\hbox{.}\mkern2mu\raise7\p@\hbox{.}\mkern1mu}}
\makeatother

\def\XXint#1#2#3{{\setbox0=\hbox{$#1{#2#3}{\int}$}
		\vcenter{\hbox{$#2#3$}}\kern-.5\wd0}}

\begin{document}
	\newtheorem{theorem}{Theorem}
	\newtheorem{proposition}[theorem]{Proposition}
	\newtheorem{conjecture}[theorem]{Conjecture}
	\def\theconjecture{\unskip}
	\newtheorem{corollary}[theorem]{Corollary}
	\newtheorem{lemma}[theorem]{Lemma}
	\newtheorem{claim}[theorem]{Claim}
	\newtheorem{sublemma}[theorem]{Sublemma}
	\newtheorem{observation}[theorem]{Observation}
	\theoremstyle{definition}
	\newtheorem{definition}{Definition}
	\newtheorem{notation}[definition]{Notation}
	\newtheorem{remark}[definition]{Remark}
	\newtheorem{question}[definition]{Question}
	\newtheorem{questions}[definition]{Questions}
	\newtheorem{example}[definition]{Example}
	\newtheorem{problem}[definition]{Problem}
	\newtheorem{exercise}[definition]{Exercise}
	\newtheorem{thm}{Theorem}
	\newtheorem{cor}[thm]{Corollary}
	\newtheorem{lem}{Lemma}[section]
	\newtheorem{prop}[thm]{Proposition}
	\theoremstyle{definition}
	\newtheorem{dfn}[thm]{Definition}
	\theoremstyle{remark}
	\newtheorem{rem}{Remark}
	\newtheorem{ex}{Example}
	\numberwithin{equation}{section}
	\def\C{\mathbb{C}}
	\def\R{\mathbb{R}}
	\def\M{\mathbb{M}}
	\def\N{\mathbb{N}}
	\def\Q{{\mathbb{Q}}}
	\def\Z{\mathbb{Z}}
	\def\F{\mathcal{F}}
	\def\L{\mathcal{L}}
	\def\S{\mathcal{S}}
	\def\supp{\operatorname{supp}}
	\def\essi{\operatornamewithlimits{ess\,inf}}
	\def\esss{\operatornamewithlimits{ess\,sup}}
	
	\numberwithin{equation}{section}
	\numberwithin{thm}{section}
	\numberwithin{theorem}{section}
	\numberwithin{definition}{section}
	\numberwithin{equation}{section}
	
	\def\earrow{{\mathbf e}}
	\def\rarrow{{\mathbf r}}
	\def\uarrow{{\mathbf u}}
	\def\varrow{{\mathbf V}}
	\def\tpar{T_{\rm par}}
	\def\apar{A_{\rm par}}
	
	\def\reals{{\mathbb R}}
	\def\torus{{\mathbb T}}
	\def\scriptm{{\mathcal T}}
	\def\heis{{\mathbb H}}
	\def\integers{{\mathbb Z}}
	\def\z{{\mathbb Z}}
	\def\naturals{{\mathbb N}}
	\def\complex{{\mathbb C}\/}
	\def\distance{\operatorname{distance}\,}
	\def\support{\operatorname{support}\,}
	\def\dist{\operatorname{dist}\,}
	\def\Span{\operatorname{span}\,}
	\def\degree{\operatorname{degree}\,}
	\def\kernel{\operatorname{kernel}\,}
	\def\dim{\operatorname{dim}\,}
	\def\codim{\operatorname{codim}}
	\def\trace{\operatorname{trace\,}}
	\def\Span{\operatorname{span}\,}
	\def\dimension{\operatorname{dimension}\,}
	\def\codimension{\operatorname{codimension}\,}
	\def\nullspace{\scriptk}
	\def\kernel{\operatorname{Ker}}
	\def\ZZ{ {\mathbb Z} }
	\def\p{\partial}
	\def\rp{{ ^{-1} }}
	\def\Re{\operatorname{Re\,} }
	\def\Im{\operatorname{Im\,} }
	\def\ov{\overline}
	\def\eps{\varepsilon}
	\def\lt{L^2}
	\def\diver{\operatorname{div}}
	\def\curl{\operatorname{curl}}
	\def\etta{\eta}
	\newcommand{\norm}[1]{ \|  #1 \|}
	\def\expect{\mathbb E}
	\def\bull{$\bullet$\ }
	
	\def\blue{\color{blue}}
	\def\red{\color{red}}
	
	\def\xone{x_1}
	\def\xtwo{x_2}
	\def\xq{x_2+x_1^2}
	\newcommand{\abr}[1]{ \langle  #1 \rangle}

	\newcommand{\Norm}[1]{ \left\|  #1 \right\| }
	\newcommand{\set}[1]{ \left\{ #1 \right\} }
	\newcommand{\ifou}{\raisebox{-1ex}{$\check{}$}}
	\def\one{\mathbf 1}
	\def\whole{\mathbf V}
	\newcommand{\modulo}[2]{[#1]_{#2}}
	\def \essinf{\mathop{\rm essinf}}
	\def\scriptf{{\mathcal F}}
	\def\scriptg{{\mathcal G}}
	\def\scriptm{{\mathcal M}}
	\def\scriptb{{\mathcal B}}
	\def\scriptc{{\mathcal C}}
	\def\scriptt{{\mathcal T}}
	\def\scripti{{\mathcal I}}
	\def\scripte{{\mathcal E}}
	\def\scriptv{{\mathcal V}}
	\def\scriptw{{\mathcal W}}
	\def\scriptu{{\mathcal U}}
	\def\scriptS{{\mathcal S}}
	\def\scripta{{\mathcal A}}
	\def\scriptr{{\mathcal R}}
	\def\scripto{{\mathcal O}}
	\def\scripth{{\mathcal H}}
	\def\scriptd{{\mathcal D}}
	\def\scriptl{{\mathcal L}}
	\def\scriptn{{\mathcal N}}
	\def\scriptp{{\mathcal P}}
	\def\scriptk{{\mathcal K}}
	\def\frakv{{\mathfrak V}}
	\def\C{\mathbb{C}}
	\def\D{\mathcal{D}}
	\def\R{\mathbb{R}}
	\def\Rn{{\mathbb{R}^{n}}}
	\def\rn{{\mathbb{R}^{n}}}
	\def\Rm{{\mathbb{R}^{2n}}}
	\def\r2n{{\mathbb{R}^{2n}}}
	\def\Sn{{{S}^{n-1}}}
	\def\M{\mathbb{M}}
	\def\N{\mathbb{N}}
	\def\Q{{\mathcal{Q}}}
	\def\Z{\mathbb{Z}}
	\def\F{\mathcal{F}}
	\def\L{\mathcal{L}}
	\def\G{\mathscr{G}}
	\def\ch{\operatorname{ch}}
	\def\supp{\operatorname{supp}}
	\def\dist{\operatorname{dist}}
	\def\essi{\operatornamewithlimits{ess\,inf}}
	\def\esss{\operatornamewithlimits{ess\,sup}}
	\def\dis{\displaystyle}
	\def\dsum{\displaystyle\sum}
	\def\dint{\displaystyle\int}
	\def\dfrac{\displaystyle\frac}
	\def\dsup{\displaystyle\sup}
	\def\dlim{\displaystyle\lim}
	\def\bom{\Omega}
	\def\om{\omega}
	\author[T. Chen]{Ting Chen}
	\address{Ting Chen:
		School of Mathematical Sciences and LPMC\\
		Nankai University\\
		Tianjin, 300071\\
		People's Republic of China}
	\email{t.chen@nankai.edu.cn}

	\author[F. Liu]{Feng Liu$^{*}$}
	\address{Feng Liu:
		College of Mathematics and System Science\\
		Shandong University of Science and Technology\\
		Qingdao, Shandong 266590\\
		People's Republic of China}
	\email{FLiu@sdust.edu.cn}
	
	\author[Z. Wang]{Zhou Wang}
	\address{Zhou Wang:
		School of Mathematical Sciences and LPMC\\
		Nankai University\\
		Tianjin, 300071\\
		People's Republic of China}
	\email{1120240036@mail.nankai.edu.cn}
	
	\keywords{Weighted weak type $(1,\,1)$ bound, maximal singular integral,
		variation operator, jump operator.\\
		\indent{2020 Mathematics Subject Classification.} Primary 42B20, 42B25.}
	
	\thanks{$^{*}$ Corresponding author, e-mail address: FLiu@sdust.edu.cn}

	\date{\today}
	\title[Weighted Variational and Jump Inequalites]
	{Weighted Endpoint Variational and Jump Inequalites for Rough Singular Integrals}
	\maketitle
	
	\begin{abstract}
		In this present paper, we study the weighted weak type $(1,\,1)$ bounds
		for rough maximal singular integral operators as well as the the
		corresponding variation and jump operators. Our first result is the weighted weak type
		$(1,\,1)$ bound for the maximal singular integral
		$$T_{\Omega}^{*}f(x)=\sup\limits_{\varepsilon>0}|T_{\varepsilon,\Omega}f(x)|=\sup\limits_{\varepsilon>0}\bigg|\int_{|x-y|>\varepsilon}
		\frac{\Omega(x-y)}{|x-y|^{d}}f(y)dy\bigg|,$$
		where $\Omega\in L^\infty(\mathbb{S}^{d-1})$, homogeneous
		of degree zero, and satisfies the cancellation condition. We show that
		$$\|T_{\Omega}^{*}\|_{L^1(w)\rightarrow L^{1,\infty}(w)}\lesssim[w]_{A_1}[w]_{A_\infty}\log([w]_{A_\infty}+1),$$
		where $w$ belongs to the Muckenhoupt class $A_1(\mathbb{R}^d)$. This
		represents an essential improvement of a result (Honz\'{\i}k, Inter.
		Math. Res. Not. 2020) and a result (Bhojak and Mohanty, J. Funct. Anal.
		2023). Our second one is the weighted weak type $(1,\,1)$ bounds for
		variation and jump operators corresponding to $\{T_{\varepsilon,
			\Omega}\}_{\varepsilon\in2^{\mathbb{Z}}}$ and $\{T_{\varepsilon,
			\Omega}^{\phi}\}_{\varepsilon\in\mathbb{R}^{+}}$ under the condition
		$\Omega\in L^\infty(\mathbb{S}^{d-1})$, where $T_{\epsilon,\Omega}^{\phi}$
		represents a smooth truncation of rough singular integral operator.
		These results of this part are the {\it first} weighted weak type $(1,\,1)$
		variation inequalities and jump inequalities for rough singular integrals.
	\end{abstract}
	
	\bigskip
	
	\section{Introduction}\label{S1}
	
	\medskip
	
	The theory of singular integral operators is one of the core components in
	harmonic analysis. It has been successfully used in study partial differential
	equations, complex analysis and other fields. This topic was initiated in the
	seminal work of Calder\'{o}n and Zygmund \cite{CZ1}. The study of boundedness
	of rough singular integrals of convolution type has been an active area of
	research since the middle of the twentieth century. Let $\Omega$ be a homogenerous function of degree zero, Calder\'{o}n and Zygmund
	\cite{CZ2} first studied the rough singular integral
	$$T_{\Omega}f(x)=\lim\limits_{\varepsilon\rightarrow 0^{+}}T_{\varepsilon,\Omega}f(x)=\lim\limits_{\varepsilon\rightarrow 0^{+}}\int_{|x-y|>\varepsilon}\frac{\Omega(x-y)}{|x-y|^{d}}f(y)dy,$$
	and concluded that $T_\Omega$ is bounded on $L^p(\mathbb{R}^{d})$ for $1<p<\infty$,
	provided that $\Omega\in L\log^+ L(\mathbb{S}^{d-1})$ and satisfies the cancellation condition
	$$\int_{\mathbb{S}^{d-1}}\Omega(\theta)d\sigma(\theta)=0.\eqno(1.1)$$
	The above conclusion was later improved independently by Ricci and Weiss
	\cite{RW} and Connett \cite{Con} to the case $\Omega\in H^1(\mathbb{S}^{d-1})$
	(the Hardy space). At the endpoint $p=1$, the $L^1$ behavior of $T_\Omega$
	becomes a challenging issue. The weak type $(1,\,1)$ bound for $T_\Omega$
	was proved independently by Christ \cite{Ch} and Hofmann \cite{Ho1} under
	the condition that $\Omega\in L^q(\mathbb{S}^{d-1})$ for some $q>1$ and
	$d=2$. Meanwhile, Christ and Rubio de Francia \cite{CR} used $TT^{*}$ methods
	to improve the above results to the case $\Omega\in L\log^+ L(\mathbb{S}^{1})$.
	The above authors also mentioned that the above endpoint bound also holds for
	$d\leq 5$ in an unpublished paper. It is exciting that the weak type $(1,\,1)$
	bound for $T_\Omega$ with $\Omega\in L\log^+L(\mathbb{S}^{d-1})$ was
	affirmatively answered by Seeger \cite{See1} via a microlocal decomposition
	method in all dimensions $d\geq 2$ (also see \cite{DL,Tao}). A Stein's
	conjecture (see \cite{St}) focusing on the weak type $(1,\,1)$ bound for
	$T_\Omega$ with $\Omega\in H^1(\mathbb{S}^{d-1})$ is still an open problem.
	Another important direction is to investigate the $L^p$ theory of singular
	integral operators in weighted setting. We can consult \cite{Duo,Wat,CCDO,HRT,LPRR}
	for weighted $L^p$ estimates of $T_\Omega$ with
	$\Omega\in L^\infty(\mathbb{S}^{d-1})$. Particularly, Li et al. \cite{LPRR}
	proved that
	$$\|T_\Omega\|_{L^1(w)\rightarrow L^{1,\infty}(w)}\lesssim[w]_{A_1}[w]_{A_\infty}\log([w]_{A_\infty}+1),\ \ \ w\in A_1(\mathbb{R}^{d}).\eqno(1.2)$$
	
	It is well known that the following relations are valid.
	$$L^\infty(\mathbb{S}^{d-1})\subsetneq L^r(\mathbb{S}^{d-1})\,(1<r<\infty)\subsetneq L\log^+L(\mathbb{S}^{d-1})\subsetneq H^1(\mathbb{S}^{d-1})\subsetneq L^1(\mathbb{S}^{d-1}).$$
	
	One of the primary aims of this paper is to establish some new weighted
	endpoint bound for the rough maximal singular integral
	$$T_{\Omega}^{*}f(x)=\sup\limits_{\varepsilon>0}|T_{\varepsilon,\Omega}f(x)|,\ \ x\in\mathbb{R}^{d}.$$
	Calder\'{o}n and Zygmund \cite{CZ2} used the rotation method to establish
	the $L^p(\mathbb{R}^{d})\,(1<p<\infty)$ boundness for $T_{\Omega}^{*}$ when
	$\Omega\in L\log^{+}L(\mathbb{S}^{d-1})$. Compared with $T_\Omega$, the
	endpoint boundedness for $T_\Omega^{*}$ is more difficult. Honz\'{\i}k
	\cite{Hon} firstly concluded that $T_{\Omega}^{*}$ is bounded from
	$L(\log\log L)^{2+\epsilon}(\mathbb{R}^{d})$ to $L^{1,\infty}(\mathbb{R}^{d})$
	locally when $\Omega\in L^\infty(\mathbb{S}^{d-1})$ for any $\epsilon>0$. Here
	$L(\log\log L)^\alpha(\mathbb{R}^{d})\,(\alpha>0)$ is the set of all
	measurable functions $f:\mathbb{R}^{d}\rightarrow\mathbb{R}$ satisfying
	$$\|f\|_{L(\log\log L)^\alpha(\mathbb{R}^{d})}:=
	\inf\Big\{\lambda>0:\int_{\mathbb{R}^{d}}|f(x)|\Big(\log\log\Big({\rm e}^2+\frac{|f(x)|}{\lambda}\Big)\Big)^\alpha dx\leq\lambda\Big\}<\infty.$$
	When $\alpha=1$, we denote $L(\log\log L)^\alpha(\mathbb{R}^{d})=
	L\log\log L(\mathbb{R}^{d})$. Later on, Bhojak and Mohanty \cite{BM} improved
	the result of \cite{Hon} by establishing that $T_\Omega^{*}$ is bounded
	from $L\log\log L(\mathbb{R}^{d})$ to $L^{1,\infty}(\mathbb{R}^{d})$ locally under
	the condition $\Omega\in L\log^{+}L(\mathbb{S}^{d-1})$. Lai
	\cite{Lai} proved the weak type $(1,\,1)$ bound for $T_\Omega^{*}$ under
	the above rough kernel.  The $(1,p)$-sparse bound for $T^{*}_{\Omega}$ under
	the condition $\Omega\in L^{\infty}(\mathbb{S}^{d-1})$ was established in the very recent preprint of Wu \cite{Wu}  and $1<p<\infty$, which improves the previous $(1+\varepsilon,1+\varepsilon)$ and $(L\log\log L,L^{p})$ sparse bound established in \cite{HLP} and \cite{TH}.
	
	 The weighted $L^p\,(1<p<\infty)$ bound for
	$T_{\Omega}^{*}$ can be found in \cite{DHL}. For the weighted endpoint
	bound of $T_{\Omega}^{*}$, Bhojak and Mohanty \cite{BM} proved that the
	following result.
	
	\medskip
	
	\quad\hspace{-20pt}{\bf Theorem A} {\rm (\cite{BM})}. {\it Let $\Omega\in
		L^\infty(\mathbb{S}^{d-1})$ satisfy $(1.1)$. Then for $w\in A_1(\mathbb{R}^{d})$,
		$$\|T_{\Omega}^{*}f\|_{L\log\log L(w)\rightarrow L^{1,\infty}(w)}\lesssim[w]_{A_1}[w]_{A_\infty}(\log[w]_{A_\infty}+1).$$}
	\medskip
	
	Since $L\log\log L(w)\subsetneq L^1(w)$, which is a proper inclusion. A
	question which arises naturally in light of Theorem A and (1.1) is the
	following:
	
	\begin{question}\label{que1.1}
		Let $w\in A_1(\mathbb{R}^{d})$. Is $T_\Omega^{*}: L^1(w)\rightarrow
		L^{1,\infty}(w)$ bounded under the same condition of Theorem A?
	\end{question}
	
	This is one of the primary motivations of this paper. We shall provide
	an affirmative answer to the above question.
	
	\begin{theorem}\label{thm1.1}
		Let $\Omega\in L^\infty(\mathbb{S}^{d-1})$ satisfy $(1.1)$. Then
		for $w\in A_1(\mathbb{R}^{d})$,
		$$\|T_{\Omega}^{*}f\|_{L^1(w)\rightarrow L^{1,\infty}(w)}\lesssim [w]_{A_1}[w]_{A_\infty}(\log[w]_{A_\infty}+1).\eqno(1.3)$$
	\end{theorem}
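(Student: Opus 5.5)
We plan to follow the scheme Li, P\'erez, Rivera-R\'ios and Roncal used for $(1.2)$, with the single-scale pieces of $T_\Omega$ replaced by maximal truncations. First we reduce to dyadic truncations. Write $T_{\varepsilon,\Omega}f=\sum_{j:2^j\ge\varepsilon}K_j*f$ up to a boundary piece, where $K_j(x)=\frac{\Omega(x)}{|x|^d}\chi_{\{2^{j}<|x|\le 2^{j+1}\}}(x)$. The truncation error at non-dyadic $\varepsilon$ is pointwise bounded by $\|\Omega\|_\infty Mf(x)$, and $\|M\|_{L^1(w)\to L^{1,\infty}(w)}\lesssim[w]_{A_1}$. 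It therefore suffices to treat the dyadic maximal operator $\sup_{k}|\sum_{j\ge k}K_j*f|$.

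Next we perform a Calder\'on--Zygmund decomposition of $f$ at height $\lambda$ with respect to Lebesgue measure, $f=g+\sum_Q b_Q$. The good part is handled by the weighted $L^2(w)$ bound of $T_\Omega^*$ (from \cite{DHL}, with sharp constant via sparse domination) together with $w\in A_1$. This gives $\lambda^{-2}\|g\|_{L^2(w)}^2\lesssim [w]_{A_1}\lambda^{-1}\|f\|_{L^1(w)}$, and more precisely a constant of the form $[w]_{A_1}[w]_{A_\infty}\log$ after tracking. We remove $E^*=\bigcup 2^{10}Q$, whose $w$-measure is $\lesssim[w]_{A_1}\lambda^{-1}\|f\|_{L^1(w)}$.

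For the bad part, group cubes by side length $2^{s}$, $B_s=\sum_{\ell(Q)=2^s}b_Q$. We then estimate
$$\sup_k\Big|\sum_{j\ge k}\sum_{s} K_j*B_{j-m}\Big|\le \sum_{m}\sup_k\Big|\sum_{j\ge k}K_j*B_{j-m}\Big|.$$
Following Seeger's microlocal decomposition, we split each $K_j$ into the pieces $K_j^{m}$ that are nonsmooth in angular sectors of aperture $2^{-m\gamma}$, and a smooth remainder whose contribution is controlled by Lebesgue-space regularity. The key new ingredient for the supremum in $k$ is to avoid the $L\log\log L$ loss of \cite{BM}, which comes from counting scales $m$ up to $\log$ of the $L^\infty$ norm of the averages. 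Instead, we use Lai's \cite{Lai} argument for the unweighted weak $(1,1)$ bound of $T_\Omega^*$, which controls $\sup_k|\sum_{j\ge k}K^m_j*B_{j-m}|$ in $L^2$ with decay $2^{-\epsilon m}$ through an $L^2$ square-function/maximal estimate in which the supremum over $k$ costs nothing beyond a $TT^*$ orthogonality.

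We then weight this estimate. On $(E^*)^c$ we use the $A_1$ property to replace $w$ by $\inf_Q w$ on each cube. We interpolate between an $L^1(w)$ bound $\lesssim m[w]_{A_1}\|b\|_{L^1(w)}$, obtained trivially by the triangle inequality and support considerations for the maximal sum, and the unweighted $L^2$ decay estimate transferred to $L^2(w^{1+\delta})$ via the reverse H\"older inequality with $\delta\sim 1/[w]_{A_\infty}$. This yields a bound of the form
$$\lesssim [w]_{A_1}\min\{m,\,[w]_{A_\infty}2^{-\epsilon m/[w]_{A_\infty}}\}.$$
Summing in $m$ gives $[w]_{A_1}[w]_{A_\infty}\log([w]_{A_\infty}+1)$, since we split at $m\approx [w]_{A_\infty}\log[w]_{A_\infty}$.

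The main obstacle is the weighted $L^2$ decay estimate for the maximal piece $\sup_k|\sum_{j\ge k}K^m_j*B_{j-m}|$ uniformly in the truncation, with the exponential decay $2^{-\epsilon m}$ retained when the weight is $w^{1+\delta}$. Our first attempt is to dominate the supremum by an $\ell^2$ sum plus a variational-type expression over $k$, each controlled by almost-orthogonality in $j$. The weight would then enter only through the averages of $b_Q$, as in \cite{LPRR}.
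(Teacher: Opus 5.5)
Your outline (dyadic reduction, Calder\'on--Zygmund decomposition, splitting the bad part by the scale gap $s$, combining a Lebesgue-measure decay estimate with a trivial weighted $L^1$ bound, and cutting the $s$-sum at $s_0\approx[w]_{A_\infty}\log([w]_{A_\infty}+1)$) is the paper's. However, both load-bearing steps are either missing or misdirected.

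The first is the unweighted decay estimate (3.4), which comes from Proposition~\ref{pro2.1}. Your plan for it cannot work: an ``$\ell^2$ sum plus a variational expression, each controlled by almost-orthogonality in $j$,'' with the supremum over $k$ ``costing nothing beyond $TT^*$''. Indeed $\sup_k|\sum_{j\ge k}F_j|\le V_2(\{\sum_{j\ge k}F_j\}_k)$. For a sequence with only uniform random-sign $L^2$ bounds, Rademacher--Menshov (Lemma~\ref{lem2.4}) loses $\log N$, where $N$ is the number of scales $j$. Here $N$ is unbounded, and this loss is sharp for general orthogonal systems. What removes it is Lai's stopping time on the shifted grids $\mathscr{D}^u$ of Lemma~\ref{lem2.2}. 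The sets $F^n_{s,u}$ and families $\mathcal{I}^n_{s,u}$ are built so that the cubes of $\{D(J):J\in\mathcal{I}^n_{s,u}\}$ containing any given point form a chain of length at most $C_02^{ds}$ (recall $\supp(H_{L(J)}*B_{L(J)-s,J})\subset 4J\subset D(J)$). Thus at each $x$ the variation in $k$ becomes a variation over at most $C_02^{ds}$ layer partial sums, costing only $\log(C_02^{ds})\approx s$. The random-sign bound comes from (2.19) applied to $\epsilon_{J(Q)}b_Q$, all of whose cubes lie in $F^{n-1}_{s,u}$. Finally, $|F^n_{s,u}|\le\frac14|F^{n-1}_{s,u}|$ supplies the factor $2^{-n}$ needed to sum over $n$.

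The second is the passage to weights. You put $w^{1+\delta}$ on the decay endpoint and call a weighted $L^2(w^{1+\delta})$ decay estimate the main obstacle. Nothing in your outline produces one, since reverse H\"older does not turn a Lebesgue $L^2$ bound into a weighted one, and none is needed. The paper puts $v=w^{1/\theta}$, $1/\theta=r_w$, on the \emph{trivial} endpoint. For every weight $v$ one has $v(E^s_{\epsilon\alpha})\lesssim\epsilon^{-1}\sum_Q|Q|\inf_Q Mv$. Using this on the cubes with $\inf_QMv\le 2^{-\delta s}u$ and (3.4) on the rest gives (3.5). Integrating (3.5) against $u^{\theta-2}\,du$ via (3.6) gives (3.7), namely $w(E^s_{\epsilon\alpha})\lesssim\epsilon^{-2}2^{-\delta(1-\theta)s}\alpha^{-1}\|f\|_{L^1(M_{1/\theta}w)}$. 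Reverse H\"older is used only to get $M_{1/\theta}w\le 2Mw\le 2[w]_{A_1}w$.

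Two bookkeeping points also need fixing. For fixed $s$ the trivial bound carries no factor $s$; your per-$m$ bound $\min\{m,\dots\}$ would sum to $s_0^2$. Summing weak-type bounds over $s>s_0$ requires thresholds $\epsilon_s$ with $\sum_s\epsilon_s=1/2$, and their $\epsilon_s^{-2}$ produces $(1-\theta)^{-3}2^{-\delta(1-\theta)s_0}\approx[w]_{A_\infty}$.

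Finally, treating the good part in $L^2(w)$ does not give (1.3): the Fefferman--Stein bound at $p=2$ already costs $(r_w')^3\approx[w]_{A_\infty}^3$. The paper applies it at $p_w=1+1/\log([w]_{A_\infty}+1)$, where $p_w'(r_w')^{2p_w-1}\approx[w]_{A_\infty}\log([w]_{A_\infty}+1)$.
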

	
	\begin{remark}\label{rem1.1}
		Theorem \ref{thm1.1} not only extend essentially (1.2), but also represents an essential
		improvement of Theorem A.
	\end{remark}

	Our another motivation of this paper is to study the weighted weak type
	$(1,\,1)$ bound for variation operator and jump operator of
	$\{T_{\varepsilon,\Omega}\}_{\varepsilon>0}$, which are larger essentially
	than rough maximal singular integral operator $T_{\Omega}^{*}$. The
	investigation on the boundedness of variational operators and jump
	operators has been an active topic of current research in harmonic
	analysis and ergodic theory. This topic began with L\'{e}pingle \cite{Le}
	who established the variational inequality for general martingales (see
	also \cite{PX} for a simple proof). Later, Bourgain \cite{Bou} proved
	similar variation estimates for the ergodic averages of a dynamic system.
	Since then, Bourgain's work \cite{Bou} stimulated lots of excellent works
	related to this topic. For examples, see \cite{JKRW,JRW} for the ergodic
	averages, \cite{MTX1,MTX2} for the differential operators, \cite{CJRW1,GT}
	for the Hilbert transform and Riesz transform, \cite{CJRW2,CDHL,DHL,JSW}
	for the singular integrals with rough kernels and \cite{MST,MTZ} for the
	discrete singular integral operators.

	Before stating our next result, let us recall some definitions, which
	followed from \cite{MSZ}. Let $\mathbb{I}$ be a totally ordered set.
	Given $\varrho\geq1$ and a family of complex numbers
	$\mathfrak{a}:=\{a_t\}_{t\in\mathbb{I}}$, the $\varrho$-variation norm
	of the family $\mathfrak{a}$ is defined by
	$$\|\mathfrak{a}\|_{V_\varrho}=\sup\limits_{t_1<t_2<\ldots<t_N,\,t_i\in\mathbb{I}}\Big(\sum\limits_{i=1}^{N-1}|a_{t_{i+1}}-a_{t_i}|^{\varrho}\Big)^{1/\varrho},\ \ 1\leq\varrho<\infty,\eqno(1.4)$$
	where the sup is taken over all finite increasing sequences $\{t_1<\cdots<t_N\}$
	with $t_i\in\mathbb{I}$. When $\varrho=\infty$, we define
	$$\|\mathfrak{a}\|_{V_\infty}=\sup\limits_{t_1<t_2,\,t_i\in\mathbb{I}}|a_{t_1}-a_{t_2}|.$$
	For each $\lambda>0$, the $\lambda$-jump counting function $\mathcal{N}_{\lambda}$
	of $\mathfrak{a}:t\in\mathbb{I}\mapsto a_{t}$ is defined by
	$$\begin{array}{ll}
	&\mathcal{N}_{\lambda}(\mathfrak{a}):=\sup \big\{N\in\mathbb{N}:\ \mathrm{There\ exists\ an\ increasing\ sequence}\ \\ &\qquad\qquad t_{1}<t_{2}<\cdots<t_{N+1},\ \mathrm{s. t.}\ \min_{1\leq i\leq N} |a_{t_{i}}-a_{t_{i+1}}| \geq \lambda\big\}.
	\end{array}$$
	For a measure space $(X,\mathscr{A},\mu)$, we consider the family of
	jump quasi-seminorms $J_{\varrho}^{p,q}$ on functions $F :(x,t)\in X\times
	\mathbb{I}\mapsto F_{t}(x)$ defined by
	$$J_{\varrho}^{p,q}((x,t)\mapsto F_{t}(x)):=\sup_{\lambda>0}\big\|\lambda(\mathcal{N}_{\lambda}(t \mapsto F_{t}(\cdot)))^{{1}/{\varrho}}\big\|_{L^{p,q}(X)}$$
	for $0<p<\infty$, $0<q<\infty$ and $0<\varrho<\infty$, where $L^{p,q}(X)$
	denotes Lorentz space. For $w\in A_{\infty}(\mathbb{R}^{d})$, let
	$J^{p,q}_{\varrho}(w,\varepsilon\in \mathbb{I})$ denote the jump quasi-seminorm
	for measure space $(\mathbb{R}^{d},\mathscr{M},wdx)$ and increasing sequences
	in $\mathbb{I}$.
	
	Precisely, let $\mathbb{R}^{+}:=(0,\infty)$ and $\mathcal{F}:=\{F_t\}_{t\in
		\mathbb{R}^{+}}$ be a family of Lebesgue measurable functions defined on
	$\mathbb{R}^{d}$. In view of the definition (1.4), one may define the strong
	$\varrho$-variation function $V_{\varrho}(\mathcal{F})(x)$ of a family
	$\mathcal{F}$ of functions. For any fixed $x\in\mathbb{R}^{d}$, the value of
	the strong $\varrho$-variation function $V_{\varrho}(\mathcal{F})$ of the
	family $\mathcal{F}$ at $x$ is defined by
	$$V_{\varrho}(\mathcal{F})(x)=\|\{F_t(x)\}_{t\in\mathbb{R}^{+}}\|_{V_\varrho},\ \ \varrho\geq1.$$
	The corresponding short $\varrho$-variation $\mathcal{S}_\varrho$ of the
	family $\mathcal{F}$ at $x$ is defined by
	$$\mathcal{S}_\varrho(\mathcal{F})(x)=\Big(\sum\limits_{k\in\mathbb{Z}}|V_{\varrho,k}(\mathcal{F})(x)|^{\varrho}\Big)^{1/\varrho},\eqno(1.5)$$
	where
	$$V_{\varrho,k}(\mathcal{F})(x)=\Big(\sup_{\substack{\varepsilon_{1}<\cdots<\varepsilon_{N}\atop
			[\varepsilon_{\ell},\varepsilon_{\ell+1}]\subset[2^{k},2^{k+1}]}}\sum_{\ell=1}^{N-1}|F_{\varepsilon_{\ell}}-F_{\varepsilon_{\ell+1}}|^{\varrho}\Big)^{1/\varrho}.\eqno(1.6)$$
	Moreover, for each $\lambda>0$, the $\lambda$-jump $N_{\lambda}$ corresponding
	to the family of functions $\mathcal{F}$ is defined by
	$$\begin{array}{ll}
		&N_\lambda(\mathcal{F})(x)=\sup\{N\in\mathbb{N}:\mathrm{There\ exists\ an\ increasing\ sequence}\\
		&\qquad\quad s_1<t_1\leq s_2<t_{2}\leq\ldots\leq s_N<t_N\ \ \mathrm{s. t.}\ \min_{1\leq \ell\leq N}|F_{s_\ell}(x)-F_{t_\ell}(x)|>\lambda\},
	\end{array}$$
	The dyadic $\lambda$-jump corresponding to $\mathcal{F}$ is defined as follows
	$$\begin{array}{ll}
		&N_\lambda^{{\rm dyad}}(\mathcal{F})(x)=\sup\{N\in\mathbb{N}:\mathrm{There\ exists\ an\ sequence\ of\ integers} \\
		&\quad j_1<k_1\leq j_2<k_2\leq\ldots\leq j_N<k_N\ \ \mathrm{s. t.}\ \min_{1\leq \ell\leq N}|F_{2^{j_\ell}}(x)-F_{2^{k_\ell}}(x)|>\lambda\}.
	\end{array}$$
	It was pointed out in \cite[p.6712, p.6716]{JSW} that for each $\lambda>0$,
	we have
	$$\mathcal{N}_{(1+\epsilon)\lambda}(\mathcal{F})(x)\leq N_\lambda(\mathcal{F})(x)\leq 2 \mathcal{N}_{\lambda/2}(\mathcal{F})(x),\eqno(1.7)$$
	where $\epsilon>0$,
	$$\lambda(\mathcal{N}_\lambda(\mathcal{F})(x))^{1/\varrho}\leq V_{\varrho}(\mathcal{F})(x),\ \lambda(N_\lambda(\mathcal{F})(x))^{1/\varrho}\leq 2^{1+1/\varrho}V_{\varrho}(\mathcal{F})(x),\ \ \varrho\geq1\eqno(1.8)$$
	and
	$$\lambda(N_\lambda(\mathcal{F})(x))^{1/\varrho}\leq C(\mathcal{S}_\varrho(\mathcal{F})(x)+\lambda(N_{\lambda/3}^{{\rm dyad}}(\mathcal{F})(x))^{1/\varrho}),\ \ \ \varrho\geq1.\eqno(1.9)$$
	In view of (1.7), one can pass from one definition of $J_{\varrho}^{p,q}$
	involving $\mathcal{N}_{\lambda}$ to the other definition involving
	$N_{\lambda}$ without difficulty. More details related to the relationship
	between variation operators and jump operators can be found in \cite{JSW}
	and \cite{MSZ}.
	
	In this paper we are concerned with the variation operators and jump
	operators for rough singular integrals. For convenience, we denote
	$\mathcal{T}_\Omega f=\{T_{\varepsilon,\Omega}f\}_{\varepsilon\in
		\mathbb{R}^{+}}$. In \cite{CJRW1,CJRW2}, the authors proved that
	$\Omega\in L\log^{+}L(\mathbb{S}^{d-1})$ is a sufficient condition
	for the valid of the following inequalities:
	$$\|V_\varrho(\mathcal{T}_{\Omega}f)\|_{L^p(\mathbb{R}^{d})}\lesssim \|f\|_{L^p(\mathbb{R}^{d})},\ \ \varrho>2,$$
	$$\|\lambda(N_\lambda(\mathcal{T}_{\Omega}f))^{1/\varrho}\|_{L^p(\mathbb{R}^{d})}\lesssim \|f\|_{L^p(\mathbb{R}^{d})},\ \ \varrho>2.\eqno(1.10)$$
	The inequality (1.10) with $\varrho=2$ was later proved by Jones et al.
	\cite{JSW} under the strong condition $\Omega\in L^q(\mathbb{S}^{d-1})$
	for some $q>1$. Subsequently, Ding et al. \cite{DHL} proved (1.10) also
	holds for $\varrho=2$ and $\Omega\in L\log^{+}L(\mathbb{S}^{d-1})$. For
	the $p=1$, Campbell et al. \cite{CJRW1,CJRW2} proved that
	$$\sup\limits_{\alpha>0}\alpha|\{x\in\mathbb{R}^{d}:V_\varrho(\mathcal{T}_{\Omega}f)(x)>\alpha\}|\lesssim\|f\|_{L^1(\mathbb{R}^{d})},\ \ \varrho>2,\eqno(1.11)$$
	$$\sup\limits_{\alpha>0}\alpha|\{x\in\mathbb{R}^{d}:\lambda(N_\lambda(\mathcal{T}_{\Omega}f))^{1/\varrho}(x)>\alpha\}|\lesssim \|f\|_{L^1(\mathbb{R}^{d})},\ \ \varrho>2,\eqno(1.12)$$
	under the condition that $\Omega$ belongs to the Lipschitz space
	${\rm Lip}_\alpha(\mathbb{S}^{d-1})$ for $\alpha>0$. In \cite{JSW}, Jones
	et al. pointed out that (1.12) with $\varrho=2$ also holds. The above
	authors also posed whether (1.11) and (1.12) also hold without the
	additional regularity assumption on $\Omega$. This question was recently
	solved by Bhojak and Shrivastava \cite{BS} who showed that (1.11) and (1.12)
	hold under the condition $\Omega\in L\log^{+}L(\mathbb{S}^{d-1})$. On the
	other hand, the weighted estimates for variation operators and jump
	operators of rough singular integrals has also been studied by many authors
	(see \cite{CDHL,MTX2}). Particularly, Chen et al. \cite{CDHL} established
	the following result.
	
	\medskip
	
	\quad\hspace{-20pt}{\bf Theorem B} {\rm (\cite{CDHL})} {\it Let
		$\Omega\in L^{q}(\mathbb{S}^{d-1})$ for some $q>1$ satisfying $(1.1)$. Then
		the following $\lambda$-jump inequality holds
		$$\Big\|\sup\limits_{\lambda>0}\lambda\sqrt{N_{\lambda}(\mathcal{T}_{\Omega}f)}\Big\|_{L^p(w)}\lesssim_{p,w}\|f\|_{L^{p}(w)},$$
		if $w$ and $p$ satisfy one of the following conditions:
		\begin{enumerate}[{\rm (i)}]
			\item $q'\leq p<\infty$, $p\neq 1$ and $w\in A_{p/q'}(\mathbb{R}^{d})$;
			\item $1<p\leq q$, $p\neq \infty$ and $w^{-\frac{1}{p-1}}\in A_{p'/q'}(\mathbb{R}^{d})$.
		\end{enumerate}
		Whence, for $\varrho>2$, the following variation inequality holds
		$$\|V_\varrho(\mathcal{T}_{\Omega}f)\|_{L^p(w)}\lesssim_{p,\varrho,w}\|f\|_{L^{p}(w)},$$
		if $w$ and $p$ satisfy one of the conditions $(i)$ or $(ii)$.}
	
	When $p=1$, Ma, Torrea and Xu \cite{MTX2} proved the following result
	(see \cite[Corollary 4]{MTX2}).
	
	\quad\hspace{-20pt}{\bf Theorem C} {\rm (\cite{MTX2})} {\it Let $\varrho>2$
		and $\Omega\in {\rm Lip}_\alpha(\mathbb{S}^{d-1})$ for some $\alpha>0$
		satisfying $(1.1)$. Then for $w\in A_1(\mathbb{R}^{d})$,
		$$\big\|V_\varrho(\mathcal{T}_{\Omega}f)\big\|_{L^{1,\infty}(w)}\lesssim\|f\|_{L^1(w)}.\eqno(1.13)$$}
	
	To our best knowledge, up to now, no proof has been known for the weighted
	weak type $(1,\,1)$ bound of variation operator and jump operator of
	singular integrals without the additional regularity assumption on $\Omega$.
	
	Based on Theorems B and C, it is natural to ask the following question.
	
	\begin{question}\label{que1.2}
		Let $w\in A_1(\mathbb{R}^{d})$. Whether the inequality (1.13) holds without
		the additional regularity assumption on $\Omega$.
	\end{question}
	
	This is another one of main motivations of this paper. In fact, we
	can't obtain the above answer to the above question. However, if we
	consider a class of special variation operators in dyadic setting,
	we shall provide an affirmative answer.
	
	\begin{theorem}\label{thm1.2}
		Let $\Omega\in L^\infty(\mathbb{S}^{d-1})$ satisfy $(1.1)$. Let
		$\mathcal{T}_{\Omega}^{{\rm dyad}}f=\{T_{\varepsilon,
			\Omega}f\}_{\varepsilon\in2^{\mathbb{Z}}}$. Then for $w\in A_1(\mathbb{R}^{d})$,
		$$\sup_{\lambda>0}\Big\|\lambda\sqrt{\mathcal{N}_\lambda(\mathcal{T}_{\Omega}^{{\rm dyad}}f)}\Big\|_{L^{1,\infty}(w)}\lesssim_{d,w}\|f\|_{L^1(w)}.\eqno(1.14)$$
		As a consequence, we have
		$$\big\|V_\varrho(\mathcal{T}_{\Omega}^{{\rm dyad}}f)\big\|_{L^{1,\infty}(w)}\lesssim_{d,w,\varrho}\|f\|_{L^1(w)}.\eqno(1.15)$$
	\end{theorem}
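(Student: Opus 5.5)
Since $\lambda\mathcal{N}_\lambda^{1/\varrho}\le V_\varrho$ for $\varrho\ge1$ and the $\varrho$-variation is controlled by jump estimates (via the standard layer-cake argument of \cite{JSW,MSZ} lifting the $\varrho=2$ jump inequality to $V_\varrho$, $\varrho>2$), (1.15) follows from (1.14). The plan is therefore to prove (1.14), and we do it through a Calder\'on--Zygmund decomposition adapted to the weight, following the scheme of Seeger \cite{See1} and the weighted version used in \cite{LPRR} for (1.2).

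We write $T_{2^j,\Omega}f=\sum_{k> j}K_k*f$, where $K_k(x)=\Omega(x)|x|^{-d}\one_{\{2^{k-1}<|x|\le 2^{k}\}}$ (indices shifted as needed). Then the dyadic family is a family of partial sums of $\sum_k K_k*f$, and the jump/variation of partial sums in $k$ is what we must control.

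Fix $\alpha>0$ and make the Calder\'on--Zygmund decomposition of $f$ at height $\alpha$ with respect to Lebesgue measure, $f=g+\sum_Q b_Q$. The exceptional set $E=\bigcup 2Q$ satisfies $w(E)\lesssim [w]_{A_1}\alpha^{-1}\|f\|_{L^1(w)}$, because $w(2Q)\lesssim [w]_{A_1}|Q|\inf_Q w$. For the good part we have $\|g\|_{L^2(w)}^2\lesssim \alpha[w]_{A_1}\|f\|_{L^1(w)}$. The weighted strong $L^2(w)$ jump inequality for $A_1\subset A_2$ weights (Theorem B with $q=\infty$, $p=2$) then gives the bound for $g$ by Chebyshev.

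For the bad part, off $E$ we split $b=\sum_s B_s$ with $B_s=\sum_{|Q|=2^{sd}}b_Q$. Since $\lambda\mathcal N_\lambda^{1/2}(\sum_k a_k)\lesssim(\sum_k|a_k|^2)^{1/2}$ is false in general, we instead use the crude bound
$$\lambda\sqrt{\mathcal N_\lambda}\le V_2\le \sum_k |K_k* \cdot|$$
only where it is affordable. Outside $E$ we have $K_k*B_s=0$ for $k\le s$. Writing $k=s+n$ with $n\ge1$, we decompose, as in Seeger's microlocal method, $K_{s+n}*B_s=\Gamma_{n,s}+(K_{s+n}-\Gamma\ldots)$, where $\Gamma_{n,s}$ is built from directional smoothing at scale $2^{-n\epsilon}$.

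\begin{enumerate}[(a)]
\item The error part has $L^1(w)$ norm $\lesssim 2^{-n\epsilon}[w]_{A_1}\|b\|_{L^1(w)}$, using the $A_1$ estimate $\sup_{Q}\int_{2^{n}\text{-annulus}}|K\ast \ldots|w\lesssim \inf_Q w$. This part is summable in $n$ and controlled trivially via the $\ell^1$ sum bound.
\item The main part is handled in $L^2(w)$. We must establish
$$\Big\|\Big(\sum_{s}|\Gamma_{n,s}|^2\Big)^{1/2}\Big\|_{L^2(w)}^2\lesssim 2^{-n\delta}\alpha\,\|f\|_{L^1(w)}$$
(with constants depending on $w$), and then pass from jumps of partial sums to this square function via a Rademacher--Menshov-type or Littlewood--Paley decoupling.
\end{enumerate}

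\textbf{Main obstacle.} The hard step is (b). The unweighted $TT^*$ orthogonality of Seeger does not directly carry a weight, and the jump functional is not a square function. My first attempt would be to reduce jumps of partial sums to a square function plus a maximal term via (1.9)-type splitting (long jumps controlled by a lacunary square function). Then I would use $w\in A_1$ to replace $w$ by $Mw$ locally constant on cubes of size $2^{s+n}$, applying the reverse H\"older property to absorb a $2^{n\delta'}$ loss smaller than the $2^{-n\delta}$ gain, as in \cite{LPRR}.
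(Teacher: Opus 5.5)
Your reduction of (1.15) to (1.14), the exceptional set, and the good part via Theorem B with $p=2$ match the paper. The bad part has two genuine gaps.

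\textbf{The long jumps of the bad part are not handled.} For a family indexed by $2^{\mathbb Z}$, the whole difficulty is the variation in $k$ of the partial sums $\sum_{j\ge k}H_j*B_{j-s}$. In your notation these are the sums over $k'\ge k$ of $K_{k'}$ against the bad pieces, and your $n$ is the paper's scale gap $s$. Your step (b) only produces a square function $(\sum_s|\Gamma_{n,s}|^2)^{1/2}$. As you note yourself, that does not dominate jumps of partial sums, and neither tool you invoke closes the gap:
\begin{itemize}
\item (1.9) is vacuous for a dyadic family. There $\mathcal S_2$ is just the square function of increments and $N^{\rm dyad}_\lambda$ is the original jump, so ``long jumps controlled by a lacunary square function'' is not available.
\item The variational Rademacher--Menshov lemma (Lemma~\ref{lem2.4}) turns sign-uniform $L^2$ bounds into $\mathcal V_2$ bounds only at a cost of $1+\log N$, and your number of summands $N$ is not controlled.
\end{itemize}
The missing idea is the localization in Proposition~\ref{pro2.1}. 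Write $H_j*B_{j-s}=\sum_J H_j*B_{j-s,J}$ over dyadic $J$ with $\ell(J)=2^j$. Attach to each $J$ a cube $D(J)\supset 4J$ in one of the shifted grids $\mathscr D^u$ (Lemma~\ref{lem2.2}), and run Lai's stopping construction (Definition~\ref{def2.2}). The sets $F^n_{s,u}$, where more than $C_02^{ds}$ cubes $D(J)$ overlap, decay geometrically in $n$ by (2.8). Within each generation $\mathcal I^n_{s,u}$, every point lies in a chain of at most $C_02^{ds}$ nested cubes $D(J)$. Layering by maximal cubes then turns the partial sums in $k$ at each point into partial sums over at most $C_02^{ds}$ layers, as in (2.14). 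Rademacher--Menshov then costs only a factor $\lesssim s$, which the decay $2^{-2\delta s}$ in (2.16) absorbs, and the factor $|F^{n-1}_{s,u}|^{1/2}$ sums the generations. Without this device, or a substitute, you have no bound with decay in the scale gap for the long-jump part, which is the core of the theorem.

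\textbf{The weighted step is also unjustified.} You try to get the decay directly in $L^2(w)$, and in (a) in $L^1(w)$ for Seeger's error term. None of these steps is available:
\begin{itemize}
\item Seeger's $L^2$ bound is a Plancherel/$TT^*$ argument, which does not survive a weight.
\item The kernels $P^s_\nu*H^s_{j,\nu}$ are not compactly supported, since $P^s_\nu$ has a degree-$0$ homogeneous multiplier. So a weighted $L^1$ error bound is not available off the shelf.
\item $A_1$ weights are not locally constant on cubes of side $2^{s+n}$ (take $|x|^{-a}$), so ``replacing $w$ by $Mw$'' is not a valid step.
\end{itemize}
The paper never proves a weighted estimate with decay. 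It combines Lebesgue-measure distributional bounds with decay, (3.4), (4.3), (4.4), with decay-free bounds $\|\cdot\|_{L^1(v)}\lesssim\sum_Q\alpha|Q|\inf_QMv$ valid for every $v\ge0$, (4.9), (4.10). The combination is Vargas's change-of-measure interpolation:
\begin{enumerate}
\item Sorting the cubes by whether $\inf_QMv\le 2^{-\delta s}u$ gives $\int_E\min\{v,u\}\lesssim\epsilon^{-2}\sum_Q|Q|\min\{u2^{-\delta s},\inf_QMv\}$.
\item Integrating against $u^{\theta-1}\,du/u$ gives $\int_Ev^\theta\lesssim\epsilon^{-2}2^{-\delta(1-\theta)s}\sum_Q|Q|(\inf_QMv)^\theta$.
\item Taking $v=w^{1/\theta}$ with $1/\theta=r_w$, the sharp reverse H\"older inequality (Lemma~\ref{lem2.1}) gives $M_{1/\theta}w\le 2Mw\le 2[w]_{A_1}w$.
\item The sum over $s$ is closed by the trivial bound for $s\le s_0\approx\frac{1}{\delta(1-\theta)}\log([w]_{A_\infty}+1)$, and by thresholds $\epsilon_s\alpha$ with $\sum_{s>s_0}\epsilon_s=\frac12$.
\end{enumerate}

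A smaller issue: your sharp annular pieces $K_k$ have no radial regularity, so the Seeger error estimate, which needs $\mathfrak M_{N_1}$ with $N_1\ge d+1$, does not apply to them. The paper runs the partial sums with the smooth $H_j$. It treats the sharp-truncation remainder $(H_k\one_{A_k})*B_{k-s}$ separately, as a square function after radial mollification at scale $2^{k-\iota s}$.
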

	\begin{remark}\label{rem1.2}
		From the montone convergence theorem and \cite[Lemma 2.3]{MSZ}, saying that the weak $L^{p}$ jump inequallity implies a weak $L^{p}$ estimate for variation seminorm for $p\in [1,\infty]$, 
		$$L^{1,\infty}(w;V^\varrho_{\varepsilon\in\mathbb{I}})\lesssim \varrho(\varrho-2)^{-1}J_{2}^{1,\infty}(w,\varepsilon\in\mathbb{I}),\eqno(1.16)$$
		where $\mathbb{I}$ is a totally ordered set. Then (1.15) follows from
		(1.14) and (1.16). On the other hand, by a standard limiting argument,
		(1.14) yields the boundedness of $T_\Omega^{*}:L^1(w)\rightarrow
		L^{1,\infty}(w)$ when $w\in A_1(\mathbb{R}^{d})$ and $\Omega\in L^\infty
		(\mathbb{S}^{d-1})$.
	\end{remark}
	
	It is worth mentioning that one can obtain a positive answer to Question
	\ref{que1.2} by consider a smooth truncation of rough singular integral
	operator. In 2013, Hyt\"{o}nen et al. \cite{HLP} firstly studied the
	weighted bounds for $\varrho$-variation operators of a smooth truncation
	of Calder\'{o}n--Zygmund singular integral operators. Precisely, let $\phi$
	be a smooth cut-off function satisfying
	$\one_{B_{1}^{c}}\leq\phi\leq\one_{B_{1/2}^{c}}$, where $B_{\varepsilon}
	=\{x\in\mathbb{R}^{d}:|x|\leq\varepsilon\}$. Clearly,
	$\supp(\phi)\subset[1/2,1]$. Let $K$ be a standard Calder\'{o}n--Zygmund
	kernel. We define the truncated operator
	$$T_{\varepsilon}^{\phi}f(x)=\int_{\mathbb{R}^{d}}K_{\varepsilon}^{\phi}(x,y)f(y)dy,\ \ \ \varepsilon>0,$$
	where
	$$K_{\varepsilon,\Omega}^{\phi}(x,y)=K(x,y)\phi\Big(\frac{x-y}{\varepsilon}\Big).\eqno(1.17)$$
	Let $\mathcal{T}_{\phi}f=\{T_{\varepsilon}^{\phi}f\}_{\varepsilon\in\mathbb{R}^{+}}$.
	Hyt\"{o}nen et al. \cite{HLP} studied the sharp weighted bounds for
	$V_\varrho(\mathcal{T}_{\phi})$ by assuming that $V_\varrho(\mathcal{T}_{\phi})$
	satisfies a prior weak type $(1,\,1)$ bound. Here we focus on the weighted
	weak type $(1,\,1)$ bound for $\varrho$-variation operator and
	$\lambda$-jump operator of a smooth truncation of rough singular integral
	operator
	$$T_{\varepsilon,\Omega}^{\phi}f(x)=\int_{\mathbb{R}^{d}}\frac{\Omega(x-y)}{|x-y|^{d}}\phi\Big(\frac{x-y}{\varepsilon}\Big)f(y)dy,\ \ \ \varepsilon>0.$$
	We have the following result.
	
	\begin{theorem}\label{thm1.3}
		Let $\Omega\in L^\infty(\mathbb{S}^{d-1})$ satisfy $(1.1)$. Let
		$\mathcal{T}_{\Omega}^{\phi}=\{T_{\varepsilon,\Omega}^{\phi}f\}_{
			\varepsilon\in\mathbb{R}^{+}}$. Then for $w\in A_1(\mathbb{R}^{d})$,
		$$\sup_{\lambda>0}\Big\|\lambda\sqrt{\mathcal{N}_\lambda(\mathcal{T}_{\Omega}^{\phi}f)}\Big\|_{L^{1,\infty}(w)}\lesssim_{d,w}\|f\|_{L^1(w)}.\eqno(1.18)$$
		As a consequence,
		$$\big\|V_\varrho(\mathcal{T}_{\Omega}^{\phi}f)\big\|_{L^{1,\infty}(w)}\lesssim_{d,w,\varrho}\|f\|_{L^1(w)}.\eqno(1.19)$$
	\end{theorem}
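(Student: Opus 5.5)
The plan is to reduce the continuous-parameter family $\mathcal{T}_{\Omega}^{\phi}$ to the dyadic family, which Theorem \ref{thm1.2} (more precisely, its proof) controls, plus a short-variation term. By (1.7) it suffices to bound $\lambda\sqrt{N_\lambda(\mathcal{T}_\Omega^\phi f)}$, and by (1.9) with $\varrho=2$ this is at most
$$C\Big(\mathcal{S}_2(\mathcal{T}_\Omega^\phi f)+\lambda\big(N^{\rm dyad}_{\lambda/3}(\mathcal{T}_\Omega^\phi f)\big)^{1/2}\Big).$$
So I will prove two weighted weak $(1,1)$ estimates for $w\in A_1$:
\begin{enumerate}[(a)]
\item the dyadic jump estimate $\sup_\lambda\|\lambda\sqrt{N^{\rm dyad}_\lambda(\{T^\phi_{2^k,\Omega}f\}_k)}\|_{L^{1,\infty}(w)}\lesssim\|f\|_{L^1(w)}$;
\item the short variation estimate $\|\mathcal{S}_2(\mathcal{T}^\phi_\Omega f)\|_{L^{1,\infty}(w)}\lesssim\|f\|_{L^1(w)}$.
\end{enumerate}
Then (1.19) follows from (1.18) via (1.16), exactly as in Remark \ref{rem1.2}.

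For (a), I will run the proof of Theorem \ref{thm1.2} with the sharp truncation replaced by $\phi$. Write $T^\phi_{2^k,\Omega}f=\sum_{j\ge k}K_j*f$ with $K_j(x)=\frac{\Omega(x)}{|x|^d}\big(\phi(x/2^{j})-\phi(x/2^{j+1})\big)$. Each $K_j$ is supported in $|x|\sim2^j$, has mean zero and is bounded by $2^{-jd}\|\Omega\|_\infty$, which are the only features the dyadic argument uses.

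The skeleton of that argument is as follows. Perform a weighted Calder\'on--Zygmund decomposition of $f$ at height $\alpha$ (for $w\in A_1$ the bad set has $w$-measure $\lesssim[w]_{A_1}\alpha^{-1}\|f\|_{L^1(w)}$), and treat the good part by the weighted $L^2$ jump bound, i.e.\ Theorem B with $q=\infty$, $p=2$, $w\in A_1\subset A_2$. For the bad part $b=\sum_Q b_Q$, remove the expanded cubes and use $\mathcal{N}_\lambda^{1/2}\lambda\le V_2\le(\sum_k|\sum_{j\ge k}\cdots|^2)^{1/2}$-type reductions. Split $b$ by scale as $B_{k-s}=\sum_{\ell(Q)=2^{k-s}}b_Q$, and apply the Seeger microlocal decomposition of $K_j$ in the angular variable, giving an $L^1(w)$ piece with geometric decay. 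The remaining $L^2(w)$ piece needs weighted orthogonality, with decay $2^{-\epsilon s}$ coming from the mean zero of $b_Q$ and the $A_1$ condition $Mw\lesssim w$.

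For (b), note that for $t\in[2^k,2^{k+1}]$ the derivative $\partial_tT^\phi_{t,\Omega}f$ has kernel $-\frac{\Omega(x)}{|x|^d}\nabla\phi(x/t)\cdot x/t^2$, which is smooth in $t$. Hence
$$V_{2,k}\le\int_{2^k}^{2^{k+1}}|\partial_tT^\phi_{t,\Omega}f|\,dt,$$
and by Cauchy--Schwarz $\mathcal{S}_2f\le\big(\sum_k\int_1^2|\,t\partial_tT^\phi_{2^kt,\Omega}f|^2\,\frac{dt}{t}\big)^{1/2}$. This is a vector-valued (square function) rough operator whose kernels $\psi_{t,k}$ are bounded by $2^{-kd}$, supported in $|x|\sim2^k$, and have mean zero by (1.1) and the radial structure. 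Its weighted weak $(1,1)$ bound then follows from the same Calder\'on--Zygmund and microlocal scheme as in (a), now $\ell^2$-valued, with the good part handled by weighted $L^2$ bounds for rough square functions.

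The main obstacle is the bad part in (a): obtaining the $L^2(w)$ orthogonality estimate for the microlocalized pieces with both summable decay in $s$ and an $A_1$ weight. Without weights this uses Plancherel, which is unavailable here. My first attempt will be to interpolate with change of measure: prove an unweighted $L^2$ estimate with decay $2^{-\epsilon s}$, and a weighted $L^2(w^{1+\delta})$ estimate with no decay, using $w^{1+\delta}\in A_1$ by reverse H\"older. Stein--Weiss interpolation then gives $L^2(w)$ decay $2^{-\epsilon' s}$, with $\epsilon'$ depending on $[w]_{A_1}$; this is why the constant is only written as $\lesssim_{d,w}$.
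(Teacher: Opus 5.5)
Your outer architecture matches the paper's. You split via (1.7) and (1.9) into $\mathcal{S}_2$ plus a dyadic jump, use weighted $L^2$ bounds from Theorem B for the good part, decompose the bad part by scale into $B_{j-s}$ with $s\ge200$ off $E$, and apply Seeger-type estimates. Bounding $V_{2,k}$ by $\int_{2^k}^{2^{k+1}}|\partial_tT^\phi_{t,\Omega}f|\,dt$ is a reasonable substitute for the paper's inequality $\|a\|_{V_2}\lesssim\|a\|_2^{1/2}\|a'\|_2^{1/2}$. It does require a decaying $L^2$ bound for the derivative kernels, which are of the same type $\Omega(x)|x|^{-d}\eta(|x|/t)$ with $\eta$ a smooth bump on $[1/2,1]$; the paper needs only a trivial bound for them. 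However, the two steps that carry the real difficulty are missing.

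\emph{First gap: the long part.} The reduction $\lambda\mathcal{N}_\lambda^{1/2}\le V_2\le(\sum_k|\sum_{j\ge k}K_j*B_{j-s}|^2)^{1/2}$ is useless. The right side is infinite in general: if $B_{j-s}\neq0$ only for $j=j_0$, every $k\le j_0$ contributes the same term $K_{j_0}*B_{j_0-s}$. The square function of increments $(\sum_k|K_k*B_{k-s}|^2)^{1/2}$ bounds $V_2$ of the partial sums only from below. An $L^2$ bound for $V_2(\{\sum_{j\ge k}K_j*B_{j-s}\}_k)$ with decay in $s$ is exactly Proposition~\ref{pro2.1}, and it needs a genuinely new mechanism. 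Using shifted dyadic grids and Lai's stopping families $F^n_{s,u}$, $\mathcal{I}^n_{s,u}$, at each point only $\le C_02^{ds}$ nested layers of cubes are active. Hence the variational Rademacher--Menshov lemma (Lemma~\ref{lem2.4}) loses only $\log(C_02^{ds})\sim s$, which is beaten by the factor $2^{-n}2^{-2\delta s}$ from the randomized Seeger bound (2.16). Nothing in your sketch replaces this. Alternatively, you could reduce (a) to the \emph{statement} of Theorem~\ref{thm1.2}, since $T^\phi_{2^k,\Omega}-T_{2^k,\Omega}$ is a single-scale mean-zero operator; that leaves a square function of the type in (4.4).

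\emph{Second gap: the weighted step, for both (a) and (b).} Your Stein--Weiss plan does not work as stated.
\begin{enumerate}
\item Seeger's estimate $\|\sum_jT_jB_{j-s}\|_2^2\lesssim2^{-\epsilon s}\alpha\|b\|_1$ is not an $L^2\to L^2$ operator bound, since it uses $\|b_Q\|_1\lesssim\alpha|Q|$. So it does not fit the framework of one linear operator bounded between two pairs of weighted $L^2$ spaces.
\item Your no-decay $L^2(w^{1+\delta})$ endpoint is itself unproved: without Plancherel, the scales $j$ and the angular pieces $\nu$ no longer decouple.
\item A weighted $L^1$ bound with decay for the non-orthogonal Seeger piece is not automatic either, because those kernels are anisotropic.
\end{enumerate}
The paper never proves any weighted microlocal estimate. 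It combines \emph{unweighted} level-set bounds with decay ((3.4), (4.3), (4.4), (5.4)) with \emph{trivial} bounds $\|\cdot\|_{L^1(v)}\lesssim\sum_Q\|b_Q\|_1\inf_QMv$, which come from $|H_j|\lesssim2^{-jd}\mathbf{1}_{\{|x|\sim2^j\}}$ ((4.9), (4.10), (5.10)). The two are combined through Vargas' interpolation:
\begin{enumerate}
\item Splitting the cubes according to whether $\inf_QMv\le2^{-\delta s}u$ gives $\int_{E^s_{\epsilon\alpha}}\min\{v,u\}\lesssim\epsilon^{-2}\sum_Q|Q|\min\{u2^{-\delta s},\inf_QMv\}$.
\item Integrating in $u$ via (3.6) with $v=w^{1/\theta}$ yields $w(E^s_{\epsilon\alpha})\lesssim2^{-\delta(1-\theta)s}\epsilon^{-2}\alpha^{-1}\|f\|_{L^1(M_{1/\theta}w)}$.
\item Reverse H\"older (Lemma~\ref{lem2.1}) with $\theta^{-1}=r_w$ gives $M_{1/\theta}w\lesssim[w]_{A_1}w$.
\item The $s$-sum is closed by splitting at $s_0$ and using the pigeonhole with weights $\epsilon_s$.
\end{enumerate}
This is the idea you need in place of weighted orthogonality.
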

	
	\begin{remark}\label{rem1.3}
		Clearly, (1.19) follows from (1.16) and (1.18).
	\end{remark}
	
	\begin{remark}\label{rem1.4}
		Let $q>1$ and $w^{\frac{q}{q-1}}\in A_1(\mathbb{R}^{d})$. Repeating the same
		interpolation arguments between crucial Lebesgue measure estimate and trivial
		weighted estimate in the proofs of Theorems \ref{thm1.1}--\ref{thm1.3},
		combining the decomposition $\Omega=\Omega\one_{D_{s}}+\Omega\one_{D^{c}_{s}}$
		in \cite{See1}, we can deduce that Theorems \ref{thm1.1}--\ref{thm1.3}
		also hold under the condition that $\Omega\in L^q(\mathbb{S}^{d-1})$ and
		$w^{\frac{q}{q-1}}\in A_1(\mathbb{R}^{d})$. It should be noting that if
		$w^{\frac{q}{q-1}}\in A_1(\mathbb{R}^{d})$, then $w\in A_1(\mathbb{R}^{d})$
		since $M(w)^{\frac{q}{q-1}}(x)\leq M(w^{\frac{q}{q-1}})(x)\leq
		w^{\frac{q}{q-1}}(x)$ a.e. $x\in\mathbb{R}^{d}$.
	\end{remark}
	
	The main novelites of methods used to proving our main results are as
	follows.
	
	$\bullet$ Firstly, to get the better Lebesgue distributional estimate for
	long jump than the one in \cite[p. 9, line 3]{BS}, we obtain the strong
	$L^2$ estimate (2.13) for long jump via the recursion construction in \cite{Lai}. The  proof is simplified compared with that in \cite{Lai} by adopting the three shifted dyadic system in \cite{BS},  Our proof
	differs from the clever sublevel set arguments in \cite{BS},
	the latter obtains the desired Lebesgue distributional estimates by increasing the extra but harmless loss in the application of variational Rademacher--Menshov theorem and hence the collection of sets $\{F_{s,u}^{n}\}_{n\geq 1}$ arising in the section 2.4 
	is not constructed in \cite{BS}. However, compared with our $L^2$ estimate (2.13),
	the use of pigenhole principle in \cite{BS} leads to an additional
	loss in Lebesgue distributional estimates for long jump. In our proof, our
	use of pigenhole principle is employed in the interpolation step, rather
	than in the crucial Lebesgue distributional estimate step.
	
	$\bullet$ Secondly, to get the short jump estimate, we use the embedding
	arguments in \cite{JSW}, which are different from the arguments in
	\cite[Section 2.1]{BS}.

	This paper is organized as follows. In Section \ref{S2} we introduce
	some notation and establish some preliminary lemmas. In Section
	\ref{S3} we prove Theorem \ref{thm1.1}. The proof of Theorem \ref{thm1.2}
	will be given in Section \ref{S4}. Finally, we present the proof of
	Theorem \ref{thm1.3} in Section \ref{S5}.

	\medskip
	
	{\bf Notation:} In what follows, the notation $X\lesssim Y$ means
	$X\le C Y$ for some constant $C>0$ which is independent of the
	essential variables depending on $X$ and $Y$; and $X\simeq Y$ means
	$X\lesssim Y\lesssim X$. For each $E\subset\mathbb{R}^{d}$ we denote
	$E^c=\mathbb{R}^{d}\setminus E$. Denote by $\one_E$ the characteristic
	function on $E$. For any $p\in[1,\infty]$ we denote by $p'$ the dual
	exponent to $p$, i.e. $1/p+1/p'=1$. We set $p'=\infty$ when $p=1$
	and $p'=1$ when $p=\infty$. For $a\in\mathbb{R}$ we denote
	$[a]=\max\{N\in\mathbb{Z}:N\leq a\}$. Given $f\in L^p(\mathbb{R}^{d})$
	for $1\leq p\leq\infty$, we set $\|f\|_p=\|f\|_{L^p(\mathbb{R}^{d})}$.
	Let $\mathbb{N}=\{0,1,\ldots\}$. For $x=(x_1,\ldots,x_{d})\in\mathbb{R}^{d}$
	and $y=(y_1,\ldots,y_{d})\in\mathbb{R}^{d}$, we denote $\langle x,y\rangle
	=\sum_{i=1}^{d}x_iy_i$. For a cube $Q\subset\mathbb{R}^{d}$ we use the
	notation $\ell(Q)$ to denote the sidelength of $Q$.
	
	\bigskip
	
	\section{Preliminary}\label{S2}
	
	\subsection{Muckenhoupt weights}
	
	Let us start with $A_p(\mathbb{R}^{d})$ weights for $1\leq p\leq\infty$.

	\begin{definition}\label{def2.1} {\bf ($A_p$ weight)} (\cite{Gr}).
		A weight is a nonnegative, locally integrable function on $\mathbb{R}^{d}$
		that takes values in $(0,\infty)$ almost everywhere. For $1<p<\infty$, a
		weight $w$ is said to be in the Muckenhoupt weight class $A_p(\mathbb{R}^{d})$
		if
		$$[w]_{A_p}:=\sup\limits_{Q}\Big(\frac{1}{|Q|}\int_Qw(x)dx\Big)\Big(\frac{1}{|Q|}\int_Qw(x)^{1-p'}dx\Big)^{p-1}<\infty,$$
        where the supremum is over all cubes $Q\subset \mathbb{R}^{d}$. We call $[w]_{A_{p}}$ the $A_{p}$ constant. If $p=1$, 
		we say that $w\in A_{1}(\mathbb{R}^{d})$ if there exists a finite constant $C$ such that 
        $$ Mw(x)\leq Cw(x),\ a.e.,$$
        where $M$ is the Hardy--Littlewood maximal
		operator, then we define the $A_{1}$ constant as the infimum of all $C$ such that $ Mw(x)\leq Cw(x),\ a.e.$
		
		Now we define $A_{\infty}(\mathbb{R}^{d})=\bigcup_{p\geq 1}A_{p}(\mathbb{R}^{d})$,
		and the $A_{\infty}$ constant $[w]_{A_{\infty}}$ is defined by
		$$[w]_{A_{\infty}}:=\sup\limits_{Q}\frac{1}{w(Q)}\int_{Q}M(w\one_{Q})(x)dx,$$
		where $w(Q)=\int_Qw(x)dx$ and the supremum is over all cubes $Q\subset \mathbb{R}^{d}$.
	\end{definition}
	
	For $1\leq r<\infty$, let $M_r$ be a variant of the Hardy--Littlewood maximal
	operator, which is defined by
	$$M_rf(x)=\sup\limits_{Q}\Big(\frac{1}{|Q|}\int_{Q}|f(y)|^rdy\Big)^{1/r},$$
    where the supremum is over all cubes $Q\subset \mathbb{R}^{d}$.
	
	The following Lemma is the sharp reverse H\"{o}lder inequality for
	$A_{\infty}(\mathbb{R}^{d})$ weight.
	
	\begin{lemma}\label{lem2.1} {\rm{(\cite{HPR})}} Let
		$w\in A_{\infty}(\mathbb{R}^{d})$. There exists a dimensional constant
		$\tau_{d}$ such that
		$$\Big(\frac{1}{|Q|}\int_{Q}w^{r_{w}}(x)dx\Big)^{\frac{1}{r_{w}}}\leq\frac{2}{|Q|}\int_{Q}w(x)dx,$$
		where $r_{w}=1+\frac{1}{\tau_{d}[w]_{A_{\infty}}}$.
	\end{lemma}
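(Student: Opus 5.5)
The plan is to follow the dyadic Calderón--Zygmund argument of Hytönen--Pérez--Rela: control the local dyadic maximal function of $w$ in $L^{1+\delta}(Q)$ with $\delta\simeq 1/[w]_{A_\infty}$, then read off the reverse Hölder inequality for $w$ itself. Fix a cube $Q$ and write $w_Q=|Q|^{-1}w(Q)$. Let $M=M^{d}_Q$ be the dyadic maximal operator relative to $Q$, applied to $w\one_Q$. Since $M^d_Q(w\one_{Q'})\le M(w\one_{Q'})$ on dyadic subcubes $Q'\subset Q$, the definition of $[w]_{A_\infty}$ in Definition \ref{def2.1} gives
$$\int_{Q'}M(w\one_{Q'})\le [w]_{A_\infty}\,w(Q')\quad\text{for every dyadic } Q'\subset Q.$$
To avoid a priori finiteness issues, I will run the argument with $\min(M w,N)$ in place of $Mw$ and let $N\to\infty$ at the end by monotone convergence.

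\textbf{Step 1 (level sets).} For $\lambda\ge w_Q$, the set $\Omega_\lambda=\{x\in Q: Mw(x)>\lambda\}$ is the disjoint union of the maximal dyadic subcubes $Q_j$ of $Q$ with average exceeding $\lambda$. Two facts follow. First, by maximality of the parent cube, $w(\Omega_\lambda)\le 2^d\lambda|\Omega_\lambda|$. Second, on each $Q_j$ we have $Mw=M(w\one_{Q_j})$, since the larger dyadic cubes containing a point of $Q_j$ have averages at most $\lambda$, which is less than $Mw$ there. Hence
$$\int_{\Omega_\lambda}Mw\le [w]_{A_\infty}\,w(\Omega_\lambda).$$

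\textbf{Step 2 (key inequality).} Use the layer-cake formula $\int_Q(Mw)^{1+\delta}=\int_0^\infty\delta\lambda^{\delta-1}\int_{\{Mw>\lambda\}}Mw\,d\lambda$, and split at $\lambda=w_Q$. The piece $\lambda<w_Q$ contributes at most $w_Q^\delta\int_QMw\le [w]_{A_\infty}w_Q^\delta w(Q)$. The piece $\lambda\ge w_Q$, by both facts in Step 1, contributes at most
$$[w]_{A_\infty}2^d\delta\int_0^\infty\lambda^{\delta}|\Omega_\lambda|\,d\lambda\le [w]_{A_\infty}2^d\frac{\delta}{1+\delta}\int_Q(Mw)^{1+\delta}.$$
Choose $\delta=1/(2^{d+1}[w]_{A_\infty})$. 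Then the last term can be absorbed into the left-hand side (this is where the truncation at level $N$ is used), giving
$$\int_Q(Mw)^{1+\delta}\le 2[w]_{A_\infty}w_Q^\delta w(Q).$$

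\textbf{Step 3 (conclusion).} Since $w\le Mw$ a.e. on $Q$ by the dyadic Lebesgue differentiation theorem, we have $\int_Qw^{1+\delta}\le\int_Q(Mw)^\delta w$. Apply the layer-cake formula again, with the same split at $w_Q$ and the bound $w(\Omega_\lambda)\le2^d\lambda|\Omega_\lambda|$:
$$\int_Q(Mw)^\delta w\le w_Q^\delta w(Q)+2^d\frac{\delta}{1+\delta}\int_Q(Mw)^{1+\delta}\le w_Q^\delta w(Q)\big(1+2^{d+1}\delta[w]_{A_\infty}\big)\le 2\,w_Q^{1+\delta}|Q|.$$
Taking the $(1+\delta)$-th root, and using $2^{1/(1+\delta)}\le2$, yields the claim with $r_w=1+\delta$, i.e.\ $\tau_d=2^{d+1}$ (a slightly smaller $\delta$ only helps).

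The main obstacle is Step 2. It requires the localization $Mw=M(w\one_{Q_j})$ on the stopping cubes, so that the $A_\infty$ (Fujii--Wilson) constant can be applied cube by cube. It also requires handling the absorption rigorously, which I would do through the truncation $\min(Mw,N)$; for that I need to check that the layer-cake identities survive the truncation, with the integrals restricted to $\lambda<N$.
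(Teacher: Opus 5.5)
Your argument is correct and is essentially the standard Hyt\"onen--P\'erez--Rela proof that the paper cites without reproducing. It has the same three ingredients: localizing the dyadic maximal function on the stopping cubes so the Fujii--Wilson constant applies cube by cube, absorbing via the truncation at level $N$ (which you justify properly), and a second layer-cake estimate against $w\,dx$, which yields the explicit value $\tau_d=2^{d+1}$.
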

	
	\subsection{Calder\'{o}n--Zygmund decomposition} Let $f\in L^1(w)$ and
	$\alpha>0$. By the Calder\'{o}n--Zygmund decomposition to $f$ at height
	$\alpha$, there exists a collection of standard dyadic cubes with disjoint
	interiors $\mathcal{Q}_{\alpha}=\{Q\in\mathscr{D}\}$, where
	$\mathscr{D}=\{2^{k}([0,1)^{d}+m):k\in\mathbb{Z},\,m\in\mathbb{Z}^d\}$
	and each $Q$ has sidelength $2^{L(Q)}$. There exist functions $g$ and
	$b$ such that
	\begin{enumerate}[{\rm (i)}]
		\item $f=g+b$;
		\item $\alpha<\int_Q |f|(x)dx\leq 2^{d}\alpha$;
		\item $w(\cup Q) \lesssim [w]_{A_{1}}\alpha^{-1}\|f\|_{L^1(w)}$ for all $w\in A_{1}$;
		\item $\|g\|_{\infty} \leq 2^{d}\alpha$;
		\item $\|g\|_{1,w} \lesssim [w]_{A_{1}}\|f\|_{L^1(w)}$;
		\item $b=\sum_{Q\in \mathcal{Q}_{\alpha}} b_{Q}$, $\supp(b_{Q})\subset Q$;
		\item Each $b_{Q}$ satisfies $\int_{Q}b_{Q}(x)dx=0$, $\|b_{Q}\|_{1}\lesssim 2^{d}\alpha|Q|$, $\|b_{Q}\|_{L^1(w)}\lesssim [w]_{A_{1}}\|f\|_{L^1(w)}$.
	\end{enumerate}
	Let $E=\cup_{Q\in\mathcal{Q}_{\alpha}}\tilde{Q}$, where $\tilde{Q}$
	denotes the cube that has the same center with $Q$ and satisfies
	$L(\tilde{Q})=L(Q)+300$. Clearly, $w(E)\lesssim_{d}[w]_{A_{1}}
	\alpha^{-1}\|f\|_{L^1(w)}$. Define $B_{j}$ as
	$$B_{j}:=\sum_{\substack{Q\in \mathcal{Q}^{\alpha}_{j}}}b_{Q}:=\sum_{\substack{Q\in \mathcal{Q}_{\alpha}\\ L(Q)=j}}b_{Q},\eqno(2.1)$$
	then $\mathcal{Q}^{\alpha}_{j-s}=\{Q\in \mathcal{Q}_{\alpha}:\ell(Q)=2^{j-s}\}$.
	
	\subsection{Microlocal decomposition of rough kernel}
	
	Let $\gamma\in(0,1)$ and $\Theta^s=\{e_\nu^s\}_{\nu\in\Lambda_s}$ be a
	collection of unit vectors on $\mathbb{S}^{d-1}$ such that
	\begin{enumerate}[{\rm (i)}]
		\item $|e_\nu^s-e_{\nu'}^s|>2^{-s\gamma-4}$ for any pair $(\nu,\nu')$
		with $\nu'\neq \nu$;
		\item For each $\theta\in\mathbb{S}^{d-1}$, there exists an $e_\nu^s$
		such that $|e_\nu^s-\theta|\leq 2^{-s\gamma-4}$.
	\end{enumerate}
	The set $\Theta^s$ can be constructed as in \cite{See1}. It was pointed
	out in \cite{See1} that $|\Theta^s|\lesssim 2^{s\gamma(d-1)}$. Let $\zeta$
	be a smooth, nonnegative, radial function, such that ${\rm supp}(\zeta)
	\subset B(0,1)$ and $\zeta(t)=1$ for $|t|\leq1/2$. Set
	$$\widetilde{\Gamma}_\nu^s(\xi)=\zeta(2^{s\gamma}({\xi}/{|\xi|}-e_\nu^s))$$
	and
	$$\Gamma_\nu^s(\xi)=\widetilde{\Gamma}_\nu^s(\xi)\Big(\sum_{\nu\in\Lambda_s}\widetilde{\Gamma}_\nu^s(\xi)\Big)^{-1}.\eqno(2.2)$$
	Clearly, $\Gamma_\nu^s$ is homogeneous of degree zero, and
	$\sum_{\nu\in\Lambda_s}\Gamma_\nu^s(\xi)=1$ for each $\xi\in\mathbb{R}^{d}
	\setminus\{0\}$ and $s$. Let $\varphi\in C_c^\infty(\mathbb{R}^{d})$ such
	that $0\leq\varphi\leq1$, $\supp(\varphi)\subset[-4,4]$ and
	$\varphi(t)\equiv1$ for $t\in[-2,2]$. We also define the following
	function, which is useful in our proof.
	$$\widehat{P_{\nu}^{s}}(\xi)=\varphi(2^{s\gamma}\langle\xi,e_\nu^s\rangle/|\xi|).\eqno(2.3)$$

	\subsection{A key estimate for variation operators}
	This subsection is devoted to presenting a precise estimate of variation
	operators. Let $\mathbb{I}$ be a totally ordered set and
	$\mathcal{F}:=\{F_t\}_{t\in \mathbb{I}}$ be a family of Lebesgue measurable
	functions defined on $\mathbb{R}^{d}$. We introduce the following variation
	operator
	$$\mathcal{V}_{2}(\mathcal{F})(x)=\sup\limits_{t\in  \mathbb{I}}|F_t(x)|+V_{2}(\mathcal{F})(x).\eqno(2.4)$$
	Let $\gamma$ be a radial function with $\gamma(x)=\phi(2|x|)-\phi(|x|)$,
	where $\phi$ is given in (1.17). Thus, $\gamma$ is supported on the
	annulus $\{x\in\mathbb{R}^{d}: 1/4\leq|x|\leq1\}$ and
	$\sum_{j\in\mathbb{Z}}\gamma_j(x)=1$ for any $x\neq0$, where
	$\gamma_j(x)=\gamma(2^{-j}x)$. Define the smooth truncated kernel $H_{j}$
	as
	$$H_{j}(x)=\frac{\Omega(x)}{|x|^{d}}\gamma_{j}(x)=K_{2^{j-1},\Omega}^{\phi}(x)-K_{2^{j},\Omega}^{\phi}(x),\eqno(2.5)$$
	where $K_{\varepsilon,\Omega}^{\phi}$ is defined in (1.17).
	
	Before presenting our main result of this subsection, let us introduce
	some notation and lemmas. Let $\mathscr{D}$ be the standard dyadic
	system, i.e. $\mathscr{D}=\{2^{j}[0,1)^{d}+m: j\in\mathbb{Z},m\in
	\mathbb{Z}^{d}\}$. Let $s\geq 200$, then for each $j\in\mathbb{Z}$ and $J\in \mathscr{D}$ with $L(J)=j$, define
	$B_{j-s,J}$ by
	$$B_{j-s,J}=\sum_{\substack{Q:Q\in \mathcal{Q}_{j-s}^{\alpha}\\Q\subset J }}b_{Q}.$$
	Therefore, we obtain
	$$H_{j}*B_{j-s}=\sum_{\substack{J:J\in \mathscr{D}\\ L(J)=j}}H_{j}*B_{j-s,J}:=\sum_{\substack{J:J\in \mathscr{D}\\ L(J)=j}}H_{j}*\sum_{\substack{Q:Q\in \mathcal{Q}_{j-s}^{\alpha}\\Q\subset J }}b_{Q}.$$
	Observe that $\supp(H_{j}*B_{j-s,J})\subset 4J$, where $4J$ denotes the
	cube that has the same center with cube $J$ and satisfies $\ell(4J)=4\ell(J)$.
	Now we introduce the shifted dyadic system, for each $u\in \{0,1/3,2/3\}^{d}$,
	the shifted dyadic system $\mathscr{D}^{u}$ is the collection of cubes
	$$\mathscr{D}^{u}:=\{2^{j}[0,1)^{d}+m+(-1)^{j}u: j\in\mathbb{Z},m\in\mathbb{Z}^{d}\}.$$
	
	Next we recall the following covering lemma in \cite{HLP}.
	
	\begin{lemma}{\rm(\cite[Lemma 2.5]{HLP})}\label{lem2.2}
		For any cube $J$ and $k\in\mathbb{N}$, there exists a shifted dyadic cube
		$R\in\mathscr{D}^{u}$ for some $u\in\{0,1/3,2/3\}^{d}$ such that $J\subset R$,
		$2^{k}J\subset R^{(k)}$, and $3\ell(J)<\ell(R)\leq6\ell(J)$, where $R^{(k)}$
		denotes the $k$ generation older dyadic ancestor of $R$ with respect to the
		system $\mathscr{D}^{u}$.
	\end{lemma}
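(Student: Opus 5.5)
The plan is to prove the statement by the standard ``one-third trick'' of Hyt\"onen et al., working one coordinate at a time. Throughout I read the shifted systems in the usual (scale-invariant) normalization $\mathscr{D}^{u}=\{2^{j}([0,1)^{d}+m+(-1)^{j}u)\}$. With this normalization each $\mathscr{D}^u$ is a genuine nested dyadic system, because the level-$j$ and level-$(j+1)$ offsets differ by
$$2^{j}(-1)^{j}u-2^{j+1}(-1)^{j+1}u=3\cdot 2^{j}(-1)^{j}u\in 2^{j}\mathbb{Z}$$
for $u_i\in\{0,1/3,2/3\}$. Each $\mathscr{D}^u$ is also a product of one-dimensional systems $\mathscr{D}^{u_i}$. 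So it suffices to choose each $u_i$ separately, using the same scale $j$ in every coordinate, and then take $R$ to be the product of the chosen intervals.

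\textbf{Choice of scale.} Fix $J$ with side $\ell=\ell(J)$ and let $j\in\mathbb{Z}$ be the unique integer with $3\ell<2^{j}\le 6\ell$. Every candidate $R$ will have side $2^j$, which gives $3\ell(J)<\ell(R)\le 6\ell(J)$. Then fix a coordinate, and let $I$ be the projection of $J$ and $2^kI$ the projection of $2^kJ$; both are concentric intervals, of lengths $\ell$ and $2^k\ell$.

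\textbf{Key one-dimensional observation.} At level $i$, the endpoints of the intervals of $\mathscr{D}^{u_i}$ form the coset $2^{i}(\mathbb{Z}+(-1)^iu_i)$. As $u_i$ ranges over $\{0,1/3,2/3\}$, these three cosets are disjoint and their union is $2^{i}\mathbb{Z}/3$, a set with spacing $2^{i}/3$.

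\begin{itemize}
\item At level $i=j$: since $|I|=\ell<2^{j}/3$, the interior of $I$ contains at most one point of $2^{j}\mathbb{Z}/3$. Hence at most one choice of $u_i$ is ``bad'' at level $j$, in the sense that an endpoint of that grid splits $I$.
\item At level $i=j+k$: since $|2^kI|=2^k\ell<2^{j+k}/3$, at most one choice of $u_i$ is bad at level $j+k$.
\end{itemize}

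There are three choices, so at least one $u_i$ is good at both levels. For that $u_i$, the interval $I$ lies in a single level-$j$ interval $R_i$ of $\mathscr{D}^{u_i}$, and $2^kI$ lies in a single level-$(j+k)$ interval $S_i$. The level-$(j+k)$ ancestor $R_i^{(k)}$ contains $R_i\supset I$. Since $I\subset 2^kI\subset S_i$, the intervals $R_i^{(k)}$ and $S_i$ are level-$(j+k)$ intervals of the same grid that both contain $I$, so they coincide. Therefore $2^kI\subset R_i^{(k)}$.

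\textbf{Assembling $R$.} Set $u=(u_1,\dots,u_d)$ and $R=\prod_i R_i\in\mathscr{D}^u$. Then $J\subset R$, and because ancestors in a product system are products of ancestors, $R^{(k)}=\prod_i R_i^{(k)}\supset 2^kJ$. The side-length bound was already secured by the choice of $j$.

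The only delicate point, which I expect to be the main (minor) obstacle, is the boundary convention. Cubes in $\mathscr{D}^u$ are half-open, while $J$ may be closed, so an endpoint of the grid might touch $\partial I$ rather than its interior. I would handle this by running the counting argument with the closed intervals $\overline{I}$ and $\overline{2^kI}$. Their lengths are strictly less than the spacing $2^{i}/3$, so each closed interval still meets at most one point of $2^{i}\mathbb{Z}/3$. The exclusion argument then goes through unchanged and gives $\overline{I}\subset R_i$ and $\overline{2^kI}\subset R_i^{(k)}$.
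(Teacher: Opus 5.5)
Your proof is correct, and as far as I can tell it is the standard ``one-third trick'' argument of the cited source [HLP, Lemma 2.5]; the paper itself only quotes the lemma and gives no proof of its own. The steps match that argument: you reduce to one coordinate at a common scale $2^j\in(3\ell(J),6\ell(J)]$, note that the three shifted grids at any level $i$ partition $2^i\cdot\frac13\mathbb{Z}$, and pigeonhole over the two levels $j$ and $j+k$ to find a shift that works at both. Your reading of $\mathscr{D}^u$ as $\{2^j([0,1)^d+m+(-1)^ju)\}$ is the intended one (the paper's displayed formula is a misprint, and only this normalization makes $\mathscr{D}^u$ nested), and running the count with closed intervals correctly handles the half-open boundary issue.
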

	
	In view of Lemma \ref{lem2.2}, then for each cube $J$ with $\ell(J)=2^{j}$,
	there exist some $u\in\{0,1/3,2/3\}^{d}$ and a shifted cube
	$R\in\mathscr{D}^{u}$ such that $\ell(R)=2^{j+2}$, $J\subset R$,
	$2^{2}J\subset R^{(2)}\in\mathscr{D}^{u}$. Now let $u(J)=u$, $D(J)=R^{(2)}$,
	thus $D(J)\in\mathscr{D}^{u(J)}$ and $\ell(D(J))=2^{j+4}$. It is worth noting
	that there may exist $u'\neq u$ and $R'\in\mathscr{D}^{u'}$ satisfying
	$\ell(R')=2^{j+2}$, $J\subset R'$, $2^{2}J\subset R'^{(2)}$, but we only take
	one of such $(u,R\in\mathscr{D}^{u})$ as the value of $(u(J),D(J))$, hence
	$u(J)$ and $D(J)$ are well-defined. By the definition of $(u(J),D(J))$, we
	write
	$$H_{j}*B_{j-s}=\sum\limits_{\substack{J:J\in \mathscr{D}\\ L(J)=j}}H_{j}*B_{j-s,J}=\sum\limits_{u\in\{0,\frac{1}{3},\frac{2}{3}\}^{d}}
	\sum\limits_{\substack{J\in \mathscr{D}:u(J)=u\\ L(J)=j}}H_{j}*B_{j-s,J}.\eqno(2.6)$$
	
	In what follows, we set $u\in \{0,1/3,2/3\}^{d}$ and $s\geq 200$. We now
	define two important families of sets, which followed from \cite{Lai}.
	
	\begin{definition}\label{def2.2}
		\begin{enumerate}[{\rm (i)}]
			\item ($\{F_{s,u}^{n}\}_{n\geq 1}$). First of all, we set
			$$\mathcal{I}^{0}_{s,u}=\big\{J\in \mathscr{D}: u(J)=u, \exists\ j\in\mathbb{Z}, Q\in \mathcal{Q}^{\alpha}_{j-s}\ \mathrm{s.t.\ } L(J)=j, Q\cap J\neq \emptyset\big\}.$$
			then we define the set $F_{s,u}^{1}$ as
			$$F_{s,u}^{1}=\Big\{x\in\mathbb{R}^{d}: \sum\limits_{J\in \mathcal{I}^{0}_{s,u}}\one_{D(J)}>C_{0}2^{ds}\Big\},$$
			where $C_{0}$ is a constant to be chosen later. For $n\geq 2$, we define
			the set $F_{s,u}^{n}$ successively as follows:
			$$F_{s,u}^{n}=\Big\{x\in\mathbb{R}^{d}: \sum\limits_{\substack{J\in \mathcal{I}^{0}_{s,u}\\D(J)\subset F_{s,u}^{n-1}} }\one_{D(J)}>C_{0}2^{ds}\Big\}.$$
			By the definition of $F_{s,u}^{n}$, there exists a chain $F_{s,u}^{0}\supset
			F_{s,u}^{1}\supset \cdots\supset F_{s,u}^{n}\supset \cdots$.
			
			\item ($\{\mathcal{I}^{n}_{s,u}\}_{n\geq 1}$). Define the sets
			$\{\mathcal{I}^{n}_{s,u}\}_{n\geq 1}$ as follows:
			$$\mathcal{I}^{1}_{s,u}=\{J\in \mathcal{I}^{0}_{s,u}:\, D(J)\not\subset F_{s,u}^{1}\},$$
			$$\mathcal{I}^{n}_{s,u}=\{J\in \mathcal{I}^{0}_{s,u}:\, D(J)\not\subset F_{s,u}^{n},\, D(J)\subset F_{s,u}^{n-1}\},\ \ n\geq2.$$
		\end{enumerate}
	\end{definition}
	In view of the above definitions, we write
	$$H_{j}*B_{j-s}=\sum\limits_{u\in\{0,\frac{1}{3},\frac{2}{3}\}^{d}}\sum\limits_{\substack{J\in \mathscr{D}:u(J)=u\\ L(J)=j}}H_{j}*B_{j-s,J}
	=\sum\limits_{u\in\{0,\frac{1}{3},\frac{2}{3}\}^{d}}\sum\limits_{\substack{J\in \mathcal{I}_{s,u}^{0}\\ L(J)=j}}H_{j}*B_{j-s,J}.\eqno(2.7)$$
	
	We shall establish the following lemma, which is analogous to Lemma
	3.3 and Lemma 3.6 in \cite{Lai}.
	
	\begin{lemma}\label{lem2.3}
		\begin{enumerate}[{\rm (i)}]
			\item For any measurable set $F$, we have
			$$\sum\limits_{j\in\mathbb{Z}}\sum\limits_{\substack{J\in \mathcal{I}^{0}_{s,u},\,D(J)\subset F\\ L(J)=j}}|D(J)|\lesssim 2^{ds}|F|.\eqno(2.8)$$
			Moreover, the following estimate also holds
			$$\sum\limits_{j\in\mathbb{Z}}\sum\limits_{\substack{J\in \mathcal{I}^{0}_{s,u}\\L(J)=j}}|D(J)|\lesssim 2^{ds}\sum\limits_{j\in\mathbb{Z}}\sum\limits_{Q\in \mathcal{Q}^{\alpha}_{j-s}}|Q|.\eqno(2.9)$$
			\item For each $s\geq 200$ and $n\geq 1$, we have
			$$|F_{s,u}^{n}|\lesssim 2^{-2d}\sum\limits_{j\in\mathbb{Z}}\sum\limits_{Q\in \mathcal{Q}^{\alpha}_{j-s}}|Q|.\eqno(2.10)$$
		\end{enumerate}
	\end{lemma}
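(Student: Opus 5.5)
The argument rests on the dyadic structure of the cubes $D(J)\in\mathscr{D}^{u}$ for fixed $u$, together with the fact that each $J\in\mathcal{I}^0_{s,u}$ meets some Calder\'on--Zygmund cube $Q\in\mathcal{Q}^\alpha_{j-s}$ with $L(J)=j$.

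\textbf{Part (i).} Since $\ell(D(J))=2^{j+4}$ and $J\subset D(J)$, for each $J$ we have $|D(J)|=2^{4d}|J|$. For fixed $j$ the map $J\mapsto D(J)$ is at most $2^{4d}$-to-one, because the $J$'s of side $2^j$ contained in a given cube of side $2^{j+4}$ number at most $2^{4d}$. Hence for fixed $j$,
$$\sum_{\substack{J\in\mathcal{I}^0_{s,u},\,D(J)\subset F\\ L(J)=j}}|D(J)|\le 2^{4d}\Big|\bigcup_{\substack{J\in\mathcal{I}^0_{s,u},\,D(J)\subset F\\ L(J)=j}}D(J)\Big|\le 2^{4d}|F|,$$
using that cubes of one generation in $\mathscr{D}^u$ are disjoint. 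The sum over $j$ is the real issue. Summing naively gives a factor equal to the number of scales, which is unbounded.

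To control this, I will use that each $J\in\mathcal{I}^0_{s,u}$ meets a cube $Q\in\mathcal{Q}^\alpha_{j-s}$. The cube $Q$ is dyadic with $\ell(Q)=2^{j-s}<\ell(J)$, so $Q\subset J\subset D(J)$. Thus each $D(J)$ at level $j$ contains a CZ cube of sidelength exactly $2^{j-s}$. Conversely, each such $Q$ lies in exactly one $J$ of side $2^j$. Since the CZ cubes are disjoint, I obtain
$$\sum_j\sum_{\substack{J\in\mathcal{I}^0_{s,u},\,D(J)\subset F\\ L(J)=j}}|D(J)|\le 2^{4d}2^{ds}\sum_j\sum_{\substack{Q\in\mathcal{Q}^\alpha_{j-s}\\ Q\subset F}}|Q|\le 2^{4d}2^{ds}|F|.$$
Here $2^{ds}=|J|/|Q|$, and each $J$ is charged to one of its $Q$'s, so distinct $J$ use distinct $Q$'s. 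Dropping the restriction $Q\subset F$ gives (2.9) by the same computation. So (i) is essentially a counting argument.

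\textbf{Part (ii).} The case $n=1$ is Chebyshev's inequality combined with (2.9):
$$|F^1_{s,u}|\le \frac{1}{C_02^{ds}}\int\sum_{J\in\mathcal{I}^0_{s,u}}\one_{D(J)}=\frac{1}{C_02^{ds}}\sum_{J}|D(J)|\lesssim C_0^{-1}\sum_j\sum_{Q\in\mathcal{Q}^\alpha_{j-s}}|Q|.$$
Choosing $C_0$ large, say $C_0\ge 2^{2d}$ times the implicit constant, gives (2.10) for $n=1$.

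For $n\ge2$, I will use the nesting $F^n_{s,u}\subset F^{n-1}_{s,u}$, which holds because the defining sum for $F^n_{s,u}$ is a subsum of that for $F^{n-1}_{s,u}$. Hence $|F^n_{s,u}|\le|F^1_{s,u}|$ and the same bound holds for every $n$. If a genuinely decaying bound is wanted, Chebyshev together with (2.8) applied to $F=F^{n-1}_{s,u}$ gives
$$|F^n_{s,u}|\le \frac{1}{C_02^{ds}}\sum_{\substack{J\in\mathcal{I}^0_{s,u}\\ D(J)\subset F^{n-1}_{s,u}}}|D(J)|\lesssim C_0^{-1}|F^{n-1}_{s,u}|,$$
so that $|F^n_{s,u}|\lesssim C_0^{-n}\sum_j\sum_{Q}|Q|$, which is stronger than (2.10).

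\textbf{Main obstacle.} The delicate step is the multiscale summation in (i). It depends on the containment $Q\subset J$: the cube $Q$ is standard dyadic and $J\in\mathscr{D}$ is standard dyadic of larger side, so $Q\cap J\ne\emptyset$ forces $Q\subset J$. I must also check $Q\subset D(J)\subset F$ in the restricted sum; this holds because $J\subset R\subset D(J)$ by Lemma \ref{lem2.2}.
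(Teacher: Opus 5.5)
Your proof is correct and follows the paper's argument. For (i) you charge each $J\in\mathcal{I}^0_{s,u}$ with $L(J)=j$ to a Calder\'on--Zygmund cube $Q\subset J\subset D(J)$ of side $2^{j-s}$, so that $|D(J)|=2^{4d}2^{ds}|Q|$, and then use disjointness of the cubes in $\mathcal{Q}_\alpha$. For (ii) you apply Chebyshev with (2.9) when $n=1$, and with (2.8) for $F=F^{n-1}_{s,u}$ to get the recursion, exactly as the paper does (the paper takes $C_0=10\cdot 2^{4d}$).

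One point to tidy: iterating actually gives $|F^n_{s,u}|\le (2^{4d}/C_0)^n\sum_j\sum_{Q\in\mathcal{Q}^\alpha_{j-s}}|Q|$ rather than ``$\lesssim C_0^{-n}$'', since the implicit constant compounds at each step. This still decays geometrically because $C_0>2^{4d}$. That decaying form, and not just the $n$-uniform bound you get from nesting, is what the later summation over $n$ in (2.16) requires.
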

	\begin{proof}
		Let $J\in \mathcal{I}^{0}_{s,u}$ and $L(J)=j$. There exists some
		$Q\in\mathcal{Q}_{j-s}^{\alpha}$ such that $Q\subset J$. Therefore,
		we obtain
		$$\begin{array}{ll}
			&\displaystyle\sum\limits_{j\in\mathbb{Z}}\sum\limits_{\substack{J\in \mathcal{I}^{0}_{s,u},\,D(J)\subset F\\ L(J)=j}}|D(J)|
			=\displaystyle\sum\limits_{j\in\mathbb{Z}}\sum\limits_{\substack{J:D(J)\in \mathscr{D}^{u},\,D(J)\subset F\\(\cup_{Q\in \mathcal{Q}_{j-s}}Q)\cap J\neq \emptyset,\\ L(J)=j}}|D(J)|\\
			&\qquad\qquad\qquad\qquad\leq\displaystyle2^{4d} 2^{sd} \sum\limits_{j\in \mathbb{Z}}\sum\limits_{\substack{J:D(J)\in \mathscr{D}^{u}:\,D(J)\subset F\\(\cup_{Q\in \mathcal{Q}_{j-s}}Q) \cap J\neq \emptyset,\\ L(J)=j}}\sum_{\substack{Q\in \mathcal{Q}^{\alpha}_{j-s}\\Q\subset J}}|Q|\\
			&\qquad\qquad\qquad\qquad\lesssim\displaystyle 2^{sd}\sum\limits_{j\in\mathbb{Z}}\sum\limits_{\substack{Q\in \mathcal{Q}^{\alpha}_{j-s}\\Q\subset F}}|Q|\\
			&\qquad\qquad\qquad\qquad\lesssim\displaystyle 2^{sd}\min\Big\{|F|,\,\sum\limits_{j\in\mathbb{Z}}\sum_{Q\in \mathcal{Q}^{\alpha}_{j-s}}|Q|\Big\}.
		\end{array}$$
		This proves (2.8) and (2.9).
		
		On the other hand, we set $C_{0}=10\cdot 2^{4d}$. When $n=1$, we get
		from (2.9) that
		$$|F_{s,u}^{1}|\leq\frac{1}{C_{0}2^{ds}}\sum\limits_{j\in\mathbb{Z}}\sum\limits_{\substack{J\in \mathcal{I}^{0}_{s,u}\\L(J)=j}}|D(J)|
		\lesssim\sum\limits_{j\in\mathbb{Z}}\sum\limits_{Q\in \mathcal{Q}^{\alpha}_{j-s}}|Q|,$$
		which leads to (2.10) for $n=1$. When $n\geq 2$, by (2.8), we obtain the recursion estimate $|F_{s,u}^{n}|\leq 2^{-2}|F_{s,u}^{n-1}|$. This together with the
		estimate for $|F_{s,u}^{1}|$ implies (2.10) for $n\geq 2$.
	\end{proof}
	
	The following result is the variational Rademacher--Menshov theorem from
	\cite[Lemma 7.2]{DOP}.
	
	\begin{lemma}\label{lem2.4} {\rm (\cite[Lemma 7.2]{DOP})}. Let $\{F_i\}_{i=1}^N$
		be a sequence of functions on a measure space $X$ such that for every
		sequence of signs $\epsilon_1,\cdots, \epsilon_{N}$, there exists a
		constant $B>0$ such that
		$$\Big\|\sum\limits_{i=1}^N\epsilon_iF_i\Big\|_2\leq B,$$
		Then,
		$$\Big\|\mathcal{V}_2\Big(\Big\{\sum\limits_{i=1}^{n}F_i\Big\}_{1\leq n\leq N}\Big)\Big\|_2\lesssim(1+\log N) B.$$
	\end{lemma}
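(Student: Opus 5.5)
We argue by dyadic chaining; this gives exactly the $(1+\log N)$ loss. First we reduce to $N=2^m$ by padding the sequence with $F_i\equiv 0$ for $N<i\le 2^m$, where $2^m<2N$. Padding changes neither the hypothesis (with the same $B$) nor the family of partial sums up to index $N$. We also set $S_0=0$ and $S_n=\sum_{i=1}^n F_i$. Then $\sup_n|S_n|=\sup_n|S_n-S_0|$ is dominated by the same chaining bound as the variation part of $\mathcal{V}_2$, so we only need to control differences $S_b-S_a$ with $0\le a<b\le 2^m$.

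For each level $l=0,1,\dots,m$, let $\mathcal{D}_l$ be the family of dyadic intervals of integers $I=(k2^l,(k+1)2^l]\cap\mathbb{Z}$, and put $F_I=\sum_{i\in I}F_i$. The standard dyadic fact is that every interval $(a,b]$ is a disjoint union of dyadic intervals using at most two from each level. Hence $S_b-S_a=\sum_{l=0}^m G^{(l)}_{(a,b]}$, where $G^{(l)}_{(a,b]}$ is a sum of at most two $F_I$ with $I\in\mathcal{D}_l$, $I\subset(a,b]$.

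Now fix $x$ and an increasing sequence $n_1<\dots<n_K$. By Minkowski's inequality in $\ell^2$,
$$\Big(\sum_k|S_{n_{k+1}}(x)-S_{n_k}(x)|^2\Big)^{1/2}\le\sum_{l=0}^m\Big(\sum_k|G^{(l)}_{(n_k,n_{k+1}]}(x)|^2\Big)^{1/2}.$$
The intervals $(n_k,n_{k+1}]$ are disjoint, so the dyadic blocks used for different $k$ are distinct. Using $(u+v)^2\le 2u^2+2v^2$, the right-hand side is at most
$$\sqrt2\sum_{l=0}^m\Big(\sum_{I\in\mathcal{D}_l}|F_I(x)|^2\Big)^{1/2}.$$
This bound is independent of the chosen sequence. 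Combining it with the same bound for $\sup_n|S_n|$ gives
$$\mathcal{V}_2\lesssim\sum_{l=0}^m\Big(\sum_{I\in\mathcal{D}_l}|F_I|^2\Big)^{1/2}.$$
Taking $L^2$ norms and applying the triangle inequality reduces the lemma to showing, for each fixed $l$,
$$\Big\|\Big(\sum_{I\in\mathcal{D}_l}|F_I|^2\Big)^{1/2}\Big\|_2\le B,$$
since $m+1\lesssim 1+\log N$.

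This square-function bound is the only place where the sign hypothesis enters, and it is the main point of the proof. Let $(r_I)_{I\in\mathcal{D}_l}$ be independent Rademacher variables. By orthogonality,
$$\sum_{I}\|F_I\|_2^2=\mathbb{E}\Big\|\sum_I r_IF_I\Big\|_2^2.$$
For each realization, $\sum_I r_IF_I=\sum_i\epsilon_iF_i$, where $\epsilon_i=r_I$ for $i\in I$; this is a legitimate choice of signs because the intervals of $\mathcal{D}_l$ partition $\{1,\dots,2^m\}$. The hypothesis therefore bounds each realization by $B^2$, which completes the proof.
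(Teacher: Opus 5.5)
Your proof is correct. It is the standard dyadic-chaining proof of the variational Rademacher--Menshov inequality: at most two dyadic blocks per scale, Minkowski's inequality over the $m+1\lesssim 1+\log N$ scales, and Rademacher averaging to get $\sum_{I\in\mathcal{D}_l}\|F_I\|_2^2\le B^2$ at each scale. The paper gives no proof of its own and simply quotes the result from [DOP, Lemma 7.2], whose argument is, as far as I recall, this same dyadic chaining.
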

	Let $H_j$ be given in (2.5). We split ${H}_{j}$ in two parts by the following,
	$${H}_{j}=\sum_{\nu\in \Lambda_{s}}P_{\nu}^{s}*H^{s}_{j,\nu}+\Big({H}_{j}-\sum_{\nu\in \Lambda_{s}}P_{\nu}^{s}*{H}^{s}_{j,\nu}\Big),\eqno(2.11)$$
	where ${H}_{j,\nu}^s(x)={H}_{j}(x)\Gamma_{\nu}^{s}(x)$, $P_{\nu}^{s}$,
	$\Lambda_{s}$ and $\Gamma_{\nu}^{s}$ are defined in subsection 2.3. Set
	$$\mathfrak{M}_{N}=\sup\limits_{r>0}\sup\limits_{\theta\in\mathbb{S}^{d-1}}\sup\limits_{0\leq l\leq N}\sup\limits_{j\in\mathbb{Z}} r^{d+l}\big|\partial_{r}^{l}{H}_{j}(r\theta)\big|.\eqno(2.12)$$
	
	The following results are slightly modified version of \cite[Lemma 2.1-2.2]{See1}, which are stated and briefly proved in \cite{Lai}
	
	\begin{lemma}\label{lem2.5} Let $\mathscr{Q}$ be a collection of cubes
		$Q\in \mathscr{D}$ with disjoint interiors. Let
		$\mathscr{Q}_{j} =\{Q \in \mathscr{Q}: L(Q) = j\}$. Let
		$\mathfrak{b}_Q$ be integrable and supported on $Q$, satisfying
		$$\int_{Q}|\mathfrak{b}_{Q}(y)|dy\lesssim_d\alpha |Q|,$$
		then for each $\gamma\in(0,1)$, $s\geq 200$,
		$$\Big\|\sum\limits_{j\in\mathbb{Z}}\Big(\sum\limits_{\nu\in \Lambda_{s}}P_{\nu}^{s}*H^{s}_{j,\nu}\Big)*\Big(\sum\limits_{Q\in\mathscr{Q}_{j-s}}\mathfrak{b}_Q\Big)\Big\|^{2}_{2}\lesssim_d\mathfrak{M}_{0}^{2}2^{-s\gamma}\alpha\sum\limits_{j\in\mathbb{Z}}\sum\limits_{Q\in \mathscr{Q}_{j-s}}\|\mathfrak{b}_Q\|_{1}.$$
	\end{lemma}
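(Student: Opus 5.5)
This is Seeger's $L^2$ estimate for the microlocalized part, and I would prove it with his $TT^*$ argument. Write
$$G=\sum_{j}\sum_{\nu}P_\nu^s*H^s_{j,\nu}*\mathfrak{B}_{j-s},\qquad \mathfrak{B}_{j-s}=\sum_{Q\in\mathscr{Q}_{j-s}}\mathfrak{b}_Q .$$

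\textbf{Step 1: almost orthogonality in $\nu$.} By Plancherel, and because the multipliers $\widehat{P_\nu^s}$ have bounded overlap (the cones $|\langle\xi/|\xi|,e_\nu^s\rangle|\le 4\cdot2^{-s\gamma}$, modulo the standard counting), I would aim for
$$\|G\|_2^2\lesssim \sum_{\nu}\Big\|\sum_j H^s_{j,\nu}*\mathfrak{B}_{j-s}\Big\|_2^2,$$
reducing the problem to a fixed direction $\nu$. I am not sure the overlap is literally bounded, so this step needs checking.

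\textbf{Step 2: expand the square.} For fixed $\nu$, expand into
$$\sum_{j,j'}\sum_{Q\in\mathscr{Q}_{j-s}}\sum_{Q'\in\mathscr{Q}_{j'-s}}\Big|\big\langle \widetilde{H^s_{j',\nu}}*H^s_{j,\nu}*\mathfrak b_Q,\ \mathfrak b_{Q'}\big\rangle\Big| .$$
By symmetry it suffices to take $j'\le j$. The kernel $H^s_{j,\nu}$ is supported in a $2^j\times(2^{j-s\gamma})^{d-1}$ tube along $e_\nu^s$, with sup norm at most $\mathfrak M_0 2^{-jd}$. So $\widetilde{H^s_{j',\nu}}*H^s_{j,\nu}$ is bounded by $\mathfrak M_0^2 2^{-jd}2^{-s\gamma(d-1)}$ and is supported in a tube of length about $2^j$ and width about $2^{j-s\gamma}$.

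\textbf{Step 3: use disjointness of the cubes.} Bound the pairing by
$$\|\mathfrak b_Q\|_1\,\big\|\,|\widetilde{H}|*|H|*|\mathfrak b_{Q'}|\,\big\|_\infty\text{-type quantities.}$$
Summing over $Q'$ uses that the $Q'$ have disjoint interiors and satisfy $\|\mathfrak b_{Q'}\|_1\lesssim\alpha|Q'|$. Hence $\sum_{Q'}|\mathfrak b_{Q'}|$ is controlled by $\alpha$ times the indicator of disjoint cubes of side $2^{j'-s}\le 2^{j-s}$, which is much smaller than the tube width $2^{j-s\gamma}$ since $\gamma<1$. Integrating the convolved kernel over such a union in a tube of volume $2^{jd}2^{-s\gamma(d-1)}$ gives, for each fixed $j'$,
$$\lesssim \alpha\,\mathfrak M_0^2\, 2^{-s\gamma(d-1)}\,\|\mathfrak b_Q\|_1 .$$

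\textbf{Step 4: sum over scales and directions.} Summing over $j'\le j$ naively loses a factor in the number of scales. To fix this, note that the support of $H_{j,\nu}^s$ in frequency forces $|j-j'|\lesssim 1$ after inserting the Littlewood--Paley localization implicit in $P^s_\nu$, or alternatively gives decay like $2^{-(j-j')}$ from the smaller kernel volume. I would use the kernel-volume decay: the $j'$ kernel has $L^1$ norm about $\mathfrak M_0 2^{-s\gamma(d-1)}$, while its sup norm is $2^{-j'd}$ times that, and the geometric sum is then controlled. Summing over the roughly $2^{s\gamma(d-1)}$ values of $\nu$ cancels one factor $2^{-s\gamma(d-1)}$, leaving $\alpha\mathfrak M_0^2\sum\|\mathfrak b_Q\|_1$ times $2^{-s\gamma(d-2)}$. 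That is not yet the stated $2^{-s\gamma}$.

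\textbf{Main obstacle.} The improvement to $2^{-s\gamma}$ must come from the mean-zero-free geometric gain: a thin tube of width $2^{j-s\gamma}$ meets the union of cubes of side $2^{j-s}$ in only a fraction of its volume, or, more precisely, Seeger uses the $L^\infty$ bound of $\widehat{H_{j,\nu}^s}$ in the direction transversal to $e_\nu^s$. This is the step I expect to be delicate, and I would follow Seeger's Lemma 2.1 computation verbatim to get the exact power $2^{-s\gamma}$.
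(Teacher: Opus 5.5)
Your strategy (Plancherel in $\nu$, then a $TT^*$ expansion using the size and support of $H^s_{j,\nu}$ and the disjointness of the cubes) is exactly Seeger's proof of his Lemma 2.1, which the paper cites via Lai rather than reproving. Your execution, however, has three concrete errors, and the ``main obstacle'' you name does not exist.

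\textbf{Step 1 is false for $d\ge 3$.} For fixed $\xi$, $\widehat{P^s_\nu}(\xi)\neq0$ forces $e^s_\nu$ to lie within $4\cdot2^{-s\gamma}$ of the great $(d-2)$-sphere $\xi^\perp\cap\mathbb{S}^{d-1}$. Since the $e^s_\nu$ are $2^{-s\gamma-4}$-separated, there are $\approx 2^{s\gamma(d-2)}$ such $\nu$. Cauchy--Schwarz in $\nu$, pointwise in $\xi$, gives only $\|G\|_2^2\lesssim 2^{s\gamma(d-2)}\sum_\nu\|\sum_j H^s_{j,\nu}*\mathfrak B_{j-s}\|_2^2$. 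This loss is unavoidable and must be paid for.

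\textbf{Step 3 is off by a factor $2^{-s\gamma(d-1)}$.} For $j'\le j$ the convolved kernel is bounded by $\|H^s_{j,\nu}\|_\infty\|H^s_{j',\nu}\|_1\lesssim\mathfrak M_0^2 2^{-jd}2^{-s\gamma(d-1)}$. It lives on a tube $T_j$ of volume $\approx 2^{jd}2^{-s\gamma(d-1)}$. Integrating $\alpha$ over the double of that tube gives $\alpha\mathfrak M_0^2 2^{-2s\gamma(d-1)}$, not $\alpha\mathfrak M_0^2 2^{-s\gamma(d-1)}$.

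\textbf{Step 4 needs neither device you propose.} $\widehat{P^s_\nu}$ is homogeneous of degree zero, so it carries no Littlewood--Paley localization. The size bounds also give no decay in $j-j'$. None is needed. All cubes of $\mathscr Q$, at every scale, have pairwise disjoint interiors. Every $Q'\in\mathscr Q_{j'-s}$ with $j'\le j$ has side $\le 2^{j-s}<2^{j-s\gamma}$, so those meeting $x-T_j$ lie in a fixed dilate $x-T_j^*$. Hence, treating all $j'\le j$ at once,
\[
\sup_x\sum_{j'\le j}\big(|H^s_{j,\nu}|*|\widetilde{H^s_{j',\nu}}|\big)*|\mathfrak B_{j'-s}|(x)\lesssim \mathfrak M_0^2\,2^{-jd}2^{-s\gamma(d-1)}\,\alpha|T_j^*|\lesssim \alpha\,\mathfrak M_0^2\,2^{-2s\gamma(d-1)} .
\]
Pair this with $|\mathfrak B_{j-s}|$, sum over $j$, and handle $j'>j$ by symmetry. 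This gives, for each $\nu$,
\[
\Big\|\sum_j H^s_{j,\nu}*\mathfrak B_{j-s}\Big\|_2^2\lesssim\alpha\,\mathfrak M_0^2\,2^{-2s\gamma(d-1)}\sum_j\|\mathfrak B_{j-s}\|_1 .
\]

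\textbf{The gain $2^{-s\gamma}$ is pure bookkeeping.} Multiply the overlap factor $2^{s\gamma(d-2)}$, the number of directions $|\Lambda_s|\lesssim 2^{s\gamma(d-1)}$, and the per-direction factor $2^{-2s\gamma(d-1)}$:
\[
2^{s\gamma(d-2)}\cdot 2^{s\gamma(d-1)}\cdot 2^{-2s\gamma(d-1)}=2^{-s\gamma}.
\]
No bound on $\widehat{H^s_{j,\nu}}$ and no extra geometric gain is involved. With the numbers you actually wrote (bounded overlap, $2^{-s\gamma(d-1)}$ per direction, $2^{s\gamma(d-1)}$ directions) you get no decay in $s$ at all. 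Deferring the last step to ``Seeger's computation verbatim'' therefore leaves the lemma unproved.
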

	
	\begin{lemma}\label{lem2.6} Let $\mathscr{Q}$ be a collection of cubes
		$Q\in \mathscr{D}$ with disjoint interiors. Let
		$\mathscr{Q}_{j}=\{Q \in \mathscr{Q}: L(Q) = j\}$. Let $\mathfrak{b}_Q$
		be integrable and supported on $Q$; moreover, suppose that $\mathfrak{b}_Q$
		has zero average, then for each $\varepsilon_0,\,\gamma\in(0,1)$,
		$N_{1}\geq d+1$, $s\geq 200$,
		$$\Big\|\sum\limits_{\nu\in \Lambda_{s}}\sum\limits_{j\in \mathbb{Z}}\big(H^{s}_{j,\nu}-P_{\nu}^{s}*H^{s}_{j,\nu}\big)*\Big(\sum\limits_{Q\in\mathscr{Q}_{j-s}}\mathfrak{b}_Q\Big)\Big\|_{1}\lesssim_dC_{s,\gamma,\varepsilon_{0},N_{1}}\sum\limits_{j\in\mathbb{Z}}\sum\limits_{Q\in \mathscr{Q}_{j-s}}\|\mathfrak{b}_Q\|_{1},$$
		where $C_{s,\gamma}=\mathfrak{M}_{0}2^{-s\eta_{1}}+\mathfrak{M}_{N_{1}}
		2^{-s\eta_{2}}$, $\eta_{1}=1-\varepsilon_{0}$, $\eta_{2}=(\varepsilon_{0}
		-\gamma)N_{1}-d\varepsilon_{0}-d\gamma$.
	\end{lemma}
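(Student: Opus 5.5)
The plan is to prove Lemma~\ref{lem2.6} with a purely kernel-by-kernel $L^1$ estimate, following Seeger's argument \cite{See1} in the form written out in \cite{Lai}. By the triangle inequality it suffices to show, for each $j\in\mathbb{Z}$,
$$\sum_{\nu\in\Lambda_s}\big\|H^{s}_{j,\nu}-P^{s}_{\nu}*H^{s}_{j,\nu}\big\|_{\ast}\lesssim_d \mathfrak{M}_{0}2^{-s\eta_1}+\mathfrak{M}_{N_1}2^{-s\eta_2},$$
where $\|\cdot\|_\ast$ is the operator norm on $L^1$-functions supported in a cube of side $2^{j-s}$. Fix $j,\nu$ and write $K_{j,\nu}=H^{s}_{j,\nu}-P^{s}_{\nu}*H^{s}_{j,\nu}$. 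On the Fourier side
$$\widehat{K_{j,\nu}}(\xi)=\big(1-\varphi(2^{s\gamma}\langle\xi,e^s_\nu\rangle/|\xi|)\big)\widehat{H^{s}_{j,\nu}}(\xi),$$
which is supported where $|\langle\xi,e^s_\nu\rangle|\geq 2\cdot2^{-s\gamma}|\xi|$. Introduce a smooth Littlewood--Paley cutoff $\chi(2^{j-s\varepsilon_0}\xi)$ and split $K_{j,\nu}=K^{\rm low}_{j,\nu}+K^{\rm high}_{j,\nu}$ accordingly.

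\emph{Low frequency part} ($|\xi|\lesssim 2^{-j+s\varepsilon_0}$). Here I would use only the mean-zero property of $\mathfrak{b}_Q$. Since $\mathfrak{b}_Q$ has zero average and $\mathrm{diam}(Q)\lesssim 2^{j-s}$, we get
$$\|K^{\rm low}_{j,\nu}*\mathfrak{b}_Q\|_1\leq 2^{j-s}\|\nabla K^{\rm low}_{j,\nu}\|_1\|\mathfrak{b}_Q\|_1.$$
The gradient costs a factor $2^{-j+s\varepsilon_0}$. The convolution with $P^s_\nu$ and with the frequency cutoff preserves $L^1$ bounds up to constants, because both kernels are $L^1$-normalized after the natural anisotropic rescaling. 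Hence
$$\|\nabla K^{\rm low}_{j,\nu}\|_1\lesssim 2^{-j+s\varepsilon_0}\|H^{s}_{j,\nu}\|_1\lesssim 2^{-j+s\varepsilon_0}\mathfrak{M}_0 2^{-s\gamma(d-1)}.$$
The last bound holds because $H^s_{j,\nu}$ lives on an annulus of radius $2^j$ intersected with a cone of aperture $2^{-s\gamma}$. Summing over the $\lesssim 2^{s\gamma(d-1)}$ values of $\nu$ gives $\mathfrak{M}_0 2^{-s(1-\varepsilon_0)}=\mathfrak{M}_02^{-s\eta_1}$.

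\emph{High frequency part} ($|\xi|\gtrsim 2^{-j+s\varepsilon_0}$). Here cancellation is not used. I would prove the bound $\|K^{\rm high}_{j,\nu}\|_1\lesssim \mathfrak{M}_{N_1}2^{-s\eta_2}2^{-s\gamma(d-1)}$ and then sum over $\nu$.

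The key input is repeated integration by parts along the radial direction. On $\supp H^s_{j,\nu}$ the radial direction deviates from $e^s_\nu$ by at most $2^{-s\gamma}$. Writing $\widehat{H^s_{j,\nu}}(\xi)$ in polar coordinates and integrating by parts $N_1$ times in $r$ gives, by the definition (2.12) of $\mathfrak{M}_{N_1}$, decay of the form $(2^j|\langle\xi,\theta\rangle|)^{-N_1}$ with $\theta$ within $2^{-s\gamma}$ of $e^s_\nu$. On the support of $1-\varphi$ we have $|\langle\xi,\theta\rangle|\gtrsim 2^{-s\gamma}|\xi|\gtrsim 2^{-j+s(\varepsilon_0-\gamma)}$. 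This yields the gain $2^{-s(\varepsilon_0-\gamma)N_1}$, further summable over dyadic shells $|\xi|\sim 2^{-j+s\varepsilon_0+m}$, $m\geq 0$.

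To pass from these Fourier bounds to an $L^1$ bound I would use Cauchy--Schwarz with a weight adapted to a slab. The slab has length $\sim 2^{j}$ in direction $e^s_\nu$ and width $\sim 2^{j-s\varepsilon_0}$ or larger transversally. The weight is controlled by $\partial_\xi$-derivatives of $\widehat{K^{\rm high}_{j,\nu}}$ (Plancherel plus $|x|^{2N}$ weights). The volume and derivative factors lose a power of at most $2^{sd(\varepsilon_0+\gamma)}$, which produces exactly $\eta_2=(\varepsilon_0-\gamma)N_1-d\varepsilon_0-d\gamma$. The condition $N_1\geq d+1$ guarantees that the shell sum converges.

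I expect this last conversion to be the main obstacle. The bookkeeping requires that each $\xi$-derivative landing on the angular cutoff $1-\varphi(2^{s\gamma}\langle\xi,e\rangle/|\xi|)$ or on $\Gamma^s_\nu$ costs at most $2^{s\gamma}/|\xi|$, and that the resulting losses are the ones absorbed into $-d\gamma-d\varepsilon_0$ in $\eta_2$. My first attempt would be the standard Sobolev-embedding route: estimate $\|(1+|2^{-j}x|^2)^{N}K\|_2$ for $N>d/2$ on each frequency shell, in coordinates adapted to $e^s_\nu$.
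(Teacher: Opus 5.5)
Your strategy is the one behind the cited result. The paper gives no proof of Lemma~\ref{lem2.6}; it defers to Seeger \cite{See1}, in the form written out by Lai \cite{Lai}. That argument also reduces by the triangle inequality to single triples $(j,\nu,Q)$, splits frequencies at $|\xi|\sim 2^{-j+s\varepsilon_0}$, uses the cancellation of $\mathfrak b_Q$ below the split and radial integration by parts above it. Your high-frequency part is fine. There the symbol vanishes near $\xi=0$, and each $\xi$-derivative costs at most $2^{j}$: through $x^\alpha H^s_{j,\nu}$, through $2^{s\gamma}/|\xi|\le 2^{j-s(\varepsilon_0-\gamma)}$ on $1-\varphi$, and through the shell cutoffs. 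So an isotropic weight $(1+2^{-j}|x|)^{M}$ with $M>d/2$ already gives an exponent no worse than $\eta_2$. Note that no $\xi$-derivative ever lands on $\Gamma^s_\nu$, since it multiplies the kernel in $x$-space.

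The low-frequency step, however, rests on a false claim: the operator $\chi(2^{j-s\varepsilon_0}D)P^s_\nu$ is \emph{not} bounded on $L^1$. Its symbol $\chi(2^{j-s\varepsilon_0}\xi)\varphi(2^{s\gamma}\langle\xi,e^s_\nu\rangle/|\xi|)$ is homogeneous of degree zero near $\xi=0$. For $2^{s\gamma}>4$ it equals $1$ when $\xi\perp e^s_\nu$ and $0$ when $\xi\parallel e^s_\nu$, so it is discontinuous at the origin. It therefore cannot be the Fourier transform of an integrable function or a finite measure. No single anisotropic rescaling normalizes it, because $\chi$ keeps all frequencies down to $0$.

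The same problem affects your kernel. As $\xi\to0$,
$\widehat{K^{\mathrm{low}}_{j,\nu}}(\xi)=\bigl(1-\varphi(2^{s\gamma}\langle\xi/|\xi|,e^s_\nu\rangle)\bigr)\int H^s_{j,\nu}+o(1)$.
In general $\int H^s_{j,\nu}\neq0$; only $\sum_\nu\int H^s_{j,\nu}=\int H_j$ vanishes, by (1.1). So $K^{\mathrm{low}}_{j,\nu}$ and $K_{j,\nu}$ are not integrable. Your $\|\cdot\|_\ast$ is then infinite unless you take it over mean-zero functions only: the cancellation of $\mathfrak b_Q$ is needed for finiteness, not just for the gain.

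The bound $\|\nabla K^{\mathrm{low}}_{j,\nu}\|_1\lesssim 2^{-j+s\varepsilon_0}\|H^s_{j,\nu}\|_1$ is nonetheless true, but you must prove it after a dyadic decomposition. Write $\chi(2^{j-s\varepsilon_0}\xi)=\sum_{k\le -j+s\varepsilon_0+O(1)}\beta_k(\xi)$, with each $\beta_k$ supported where $|\xi|\sim 2^k$. On each shell, $\beta_k(\xi)\varphi(2^{s\gamma}\langle\xi,e^s_\nu\rangle/|\xi|)$ is a smooth bump adapted to a box of size $2^{k-s\gamma}\times 2^k\times\cdots\times 2^k$, with first axis along $e^s_\nu$. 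Hence $\nabla\beta_k(D)(I-P^s_\nu)$ has an integrable kernel of norm $\lesssim 2^k$, and the geometric series over $k$ gives $2^{-j+s\varepsilon_0}$. Since $K^{\mathrm{low}}_{j,\nu}$ is a bounded smooth function with $\nabla K^{\mathrm{low}}_{j,\nu}\in L^1$, your mean-value estimate then applies. The rest of your low-frequency computation goes through unchanged and gives $\mathfrak M_0 2^{-s\eta_1}$ after summing over $\nu$.
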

	It is worth noting that the cancellation of $\mathfrak{b}_Q$ is important 
	for the valid of Lemma \ref{lem2.6}. 
	
	We now end this section by establishing the following proposition.
	
	\begin{proposition}\label{pro2.1}
		Let $B_j$ be defined in $(2.1)$ and $H_j$ be defined in $(2.5)$.
		There exists a constant $\delta>0$ such that for each $s\geq 200$,
		$$\Big\|\mathcal{V}_{2}\Big(\Big\{\sum\limits_{j\geq k}H_{j}*B_{j-s}\Big\}_{k\in\mathbb{Z}}\Big)\Big\|_{2}
		\lesssim_{d}s2^{-2\delta s}\alpha\Big(\sum\limits_{j\in\mathbb{Z}}\sum\limits_{Q\in \mathcal{Q}^{\alpha}_{j-s}}|Q|\Big)^{1/2}.\eqno(2.13)$$
	\end{proposition}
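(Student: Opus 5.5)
The plan follows Lai's recursion scheme, with the shifted dyadic systems of Lemma \ref{lem2.2}. By (2.7) I split
$$H_j*B_{j-s}=\sum_{u}\sum_{n\geq1}\sum_{J\in\mathcal{I}^n_{s,u},\,L(J)=j}H_j*B_{j-s,J}=:\sum_u\sum_{n\ge1}G^{n}_{j,u}.$$
This is a partition of $\mathcal{I}^0_{s,u}$: by Lemma \ref{lem2.3}(ii) the sets $F^n_{s,u}$ have measure tending to zero, so every $J$ eventually has $D(J)\not\subset F^n_{s,u}$. There are $3^d$ values of $u$, so it suffices to fix $u$. Since $\mathcal{V}_2$ is sublinear, the triangle inequality in $n$ reduces (2.13) to a bound of the form
$$\Big\|\mathcal{V}_2\Big(\Big\{\sum_{j\ge k}G^n_{j,u}\Big\}_k\Big)\Big\|_2\lesssim s\,2^{-2\delta s}c_n\,\alpha\Big(\sum_j\sum_{Q\in\mathcal{Q}^\alpha_{j-s}}|Q|\Big)^{1/2}$$
with $\sum_n c_n<\infty$, say $c_n=2^{-n/2}$ for $n\ge2$.

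Fix $n$ and write $\mathfrak{b}^{n}_{j}=\sum_{J\in\mathcal{I}^n_{s,u},L(J)=j}B_{j-s,J}$, so that $G^n_{j,u}=H_j*\mathfrak{b}^n_j$. Each $J$ contributes to at most one $j$, and the underlying $b_Q$ have disjoint supports, zero mean and $\|b_Q\|_1\lesssim\alpha|Q|$. The sparseness coming from $\mathcal{I}^n$ is used as follows. For $J\in\mathcal{I}^n_{s,u}$ we have $D(J)\not\subset F^n_{s,u}$. Since $D(J)\in\mathscr{D}^u$ and the overlap count in the definition of $F^n$ is a sum of indicators of dyadic cubes in the same system, I expect the cubes $D(J)$ with $J\in \mathcal{I}^n$ to have bounded overlap, namely at most $C_02^{ds}$ of them at each point of $F^{n-1}_{s,u}$. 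Together with (2.8) this gives
$$\sum_j\sum_{J\in\mathcal{I}^n,L(J)=j}|D(J)|\lesssim 2^{ds}|F^{n-1}_{s,u}|\lesssim 2^{ds}2^{-2d}4^{-n}\sum|Q|$$
by (2.10). The total mass of the $Q$'s involved at level $n$ is therefore geometrically small in $n$.

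Next I decompose $H_j$ as in (2.11). For the main part $\sum_\nu P^s_\nu*H^s_{j,\nu}$, I apply Lemma \ref{lem2.4} to the finitely many nonzero summands; the usual truncation/monotone convergence argument handles infinitely many $j$. For every choice of signs $\epsilon_j$, Lemma \ref{lem2.5} applied with $\mathfrak{b}_Q=\epsilon_j b_Q$ yields $B^2\lesssim 2^{-s\gamma}\alpha^2\sum|Q|$. The $\log N$ loss must be turned into $s$. To do this I group the $j$'s in residue classes mod $s$, or equivalently use the bounded overlap above, so that in each $L^2$ variational estimate the relevant number of overlapping scales at a point is $O(2^{ds})$-controlled. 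This is the main obstacle: naively $\log N$ is unbounded. The approach I would try first is to localize to cubes $D(J)$. On each point of $F^{n-1}\setminus F^n$ only $\lesssim C_0 2^{ds}$ of the $D(J)$ are active, so the Rademacher--Menshov lemma is applied with effectively $N\lesssim 2^{ds}$ terms, giving the factor $1+\log N\lesssim s$. The sparse-overlap bound then lets me sum the localized $L^2$ pieces. The remainder part (the second term of (2.11)) I control by the trivial estimate $\mathcal{V}_2\le 2\sum_j|\cdot|$. In $L^1$, Lemma \ref{lem2.6} gives decay $2^{-s\eta}$, and $L^\infty$ is bounded by $\alpha$ times the overlap; interpolating $\|f\|_2^2\le\|f\|_1\|f\|_\infty$ yields a $2^{-s\eta/2}$ gain times $\alpha(\sum|Q|)^{1/2}$.

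Finally I choose $\gamma$ and $\varepsilon_0$ so that $\eta_1,\eta_2>0$ (with $N_1$ large and $\mathfrak{M}_{N_1}<\infty$ from the smoothness of $\gamma_j$). Set $2\delta=\min(\gamma/2,\eta/2)$ minus the $2^{-2d}$-type losses. Summing over $n$ using the geometric factors from (2.10) and over the $3^d$ shifts $u$ then gives (2.13).
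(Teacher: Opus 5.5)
\textbf{There is a genuine gap.} It is at exactly the step you flag as the main obstacle, and the proposal does not close it.

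\textbf{The reduction from $\log N$ to $s$ is missing.} Your bounded-overlap expectation is correct: take the smallest cube $D(J)\ni x$ with $J\in\mathcal{I}^n_{s,u}$ and a point of it outside $F^n_{s,u}$. But Lemma~\ref{lem2.4} concerns one fixed sequence $F_1,\dots,F_N$, and a pointwise bound on the number of active cubes does not produce such a sequence.
Grouping $j$ modulo $s$ leaves infinitely many scales in each class. Localizing to disjoint cubes does not help either: inside a single cube of $\mathcal{D}^n_{s,u}=\{D(J):J\in\mathcal{I}^n_{s,u}\}$ the smaller cubes may occupy unboundedly many distinct scales, while every point meets at most $C_02^{ds}$ of them.

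The missing idea is to re-index by generations of maximal cubes. Let $\mathcal{M}^{n,1}_{s,u}$ be the maximal cubes of $\mathcal{D}^n_{s,u}$, let $\mathcal{M}^{n,i}_{s,u}$ be the maximal cubes of $\mathcal{D}^n_{s,u}\setminus\bigcup_{\kappa<i}\mathcal{M}^{n,\kappa}_{s,u}$, and set $F_i=\sum_{J:\,D(J)\in\mathcal{M}^{n,i}_{s,u}}H_{L(J)}*B_{L(J)-s,J}$.
The overlap bound gives at most $C_02^{ds}$ generations. Since $\mathscr{D}^u$ is nested and $\supp(H_j*B_{j-s,J})\subset 4J\subset D(J)$, the cubes $D(J)$ containing a given $x$ form a chain with one cube per generation. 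Hence every partial sum $\sum_{j\ge k}$ evaluated at $x$ equals some $\sum_{i\le l}F_i(x)$ with $l\le C_02^{ds}$; this is (2.14).
Lemma~\ref{lem2.4} then costs only $1+\log(C_02^{ds})\lesssim s$. Since $\sum_i\epsilon_iF_i$ is a special case of $\sum_J\epsilon_JH_{L(J)}*B_{L(J)-s,J}$, everything reduces to the single sign-sum bound (2.16).

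\textbf{The order of operations also breaks your argument.} The reduction above relies on the compact support of the pieces, so it must be done with the full kernel $H_j$, \emph{before} the splitting (2.11).
You split first and run the variational argument on $\sum_\nu P^s_\nu*H^s_{j,\nu}*\mathfrak{b}^n_j$. But $\widehat{P^s_\nu}$ is homogeneous of degree zero, so $P^s_\nu$ has a non-local kernel. These functions are therefore not supported in the cubes $D(J)$, and the localization that was supposed to bring $N$ down to $2^{ds}$ is lost.

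Your remainder estimate fails for a related reason. Put $R_j=(H_j-\sum_\nu P^s_\nu*H^s_{j,\nu})*\mathfrak{b}^n_j$. An $L^\infty$ bound of order $\alpha$ on $\sum_j|R_j|$ is not available, because the kernels of $P^s_\nu$ are of singular-integral type.
Even an $L^\infty$ bound of $\alpha$ times the overlap $C_02^{ds}$ does not suffice. Combined via $\|\cdot\|_2^2\le\|\cdot\|_1\|\cdot\|_\infty$ with the $L^1$ bound $2^{-s\eta_1}\alpha\sum|Q|$ from Lemma~\ref{lem2.6}, it gives a factor $2^{(d-\eta_1)s}$, which grows because $\eta_1=1-\varepsilon_0<1$.

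\textbf{How the paper handles these points.} The splitting (2.11) is applied only to the one signed function in (2.16).
\begin{itemize}
\item The main part is controlled by Lemma~\ref{lem2.5}.
\item The remainder is controlled by interpolating Lemma~\ref{lem2.6} with the $L^3$ bound (2.18), whose loss is only of size $2^{O(\gamma s)}$; this yields (2.19).
\item The factor $2^{-n}$ that makes the sum over $n$ converge comes from $\sum_j\sum_{Q\in\mathfrak{Q}^{\alpha,n,s}_{j-s}}|Q|\le|F^{n-1}_{s,u}|$ together with the recursion $|F^n_{s,u}|\le 2^{-2}|F^{n-1}_{s,u}|$.
\end{itemize}
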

	\begin{proof}
		To establish the $L^{2}$ norm estimate (2.13), we adopted the recursion
		arguments. In view of (2.10), we can conclude that
		$\mathcal{I}^{0}_{s,u}=\cup_{n=1}^{\infty}\mathcal{I}^{n}_{s,u}$. Define
		$$\mathcal{D}_{s,u}^{n}=\{D(J)\in \mathscr{D}^{u}:J\in \mathcal{I}_{s,u}^{n}\}.$$
		Let $\mathcal{M}^{n,i}_{s,u}$ be the maximal cubes in $\mathcal{D}_{s,u}^{n}$
		and $\mathcal{M}^{n,2}_{s,u}$ be the collection of maximal cubes in
		$\mathcal{D}_{s,u}^{n}\setminus \mathcal{M}^{n,1}_{s,u}$. Now we construct
		$\mathcal{M}^{n,i}_{s,u}$ inductively, define $\mathcal{M}^{n,i}_{s,u}$ by
		$$\mathcal{M}^{n,i}_{s,u}=\{\mathrm{the\ maximal\ cubes\ of\  }\mathcal{D}_{s,u}^{n}\setminus \cup_{\kappa=1}^{i-1}\mathcal{M}^{n,\kappa}_{s,u}\}.$$
		By the algorithm of $\mathcal{I}_{s,u}^{n}$ above, we have that the
		construction of $\mathcal{M}^{n,i}_{s,u}$ will end with $u\geq C_{0}2^{ds}+1$.
		Since the element in $\mathcal{M}^{n,i}_{s,u}$ are mutually disjoint, then
		for each $x\in\mathbb{R}^{d}$ and $1\leq i\leq C_{0}2^{ds}$, either there is
		no $J$ so that $D(J)\in\mathcal{M}^{n,i}_{s,u}$ containing $x$ or there
		exists exactly one $K\in\mathcal{M}^{n,i}_{s,u}$ containing $x$. In the
		latter case, there exist an integer $i(x)\in [1,C_{0}2^{ds}]$ and a chain
		of cubes $K_{i(x)}\supsetneq\cdots\supsetneq K_{1}\ni x$ such that
		$K_{\kappa}\in\mathcal{M}^{n,\kappa}_{s,u}$, $1\leq\kappa\leq i(x)$.
		Therefore, we obtain that for each for each $x$ (similar to the maximal
		case, see page 14 in \cite{Lai})
		$$\begin{array}{ll}
			&\displaystyle \sup\limits_{k\in\mathbb{Z}}\Big|\sum\limits_{j\geq k}\sum_{\substack{J\in \mathcal{I}^{n}_{s,u}\\L(J)=j}}H_{j}*B_{j-s,J}(x)\Big|\\
			&\qquad\qquad\qquad\qquad\leq\displaystyle\mathcal{V}_{2}\bigg(\Big\{\sum\limits_{i=1}^{l}\sum\limits_{\substack{J:J\in \mathcal{I}^{n}_{s,u},\\D(J)\in\mathcal{M}^{n,i}_{s,u}}}H_{L(J)}*B_{L(J)-s,J}(x)\Big\}_{1\leq l\leq C_02^{ds}}\bigg),\\
			&V_{2}\bigg(\Big\{\sum\limits_{j\geq k}\sum\limits_{\substack{J\in \mathcal{I}^{n}_{s,u}\\L(J)=j}}H_{j}*B_{j-s,J}(x)\Big\}_{k\in\mathbb{Z}}\bigg)\\
			&\qquad\qquad\qquad\qquad\leq\displaystyle\mathcal{V}_{2}\bigg(\Big\{\sum\limits_{i=1}^{l}\sum\limits_{\substack{J:J\in \mathcal{I}^{n}_{s,u},\\D(J)\in\mathcal{M}^{n,i}_{s,u}}}H_{L(J)}*B_{L(J)-s,J}(x)\Big\}_{1\leq l\leq C_02^{ds}}\bigg).
		\end{array}\eqno(2.14)$$
		In view of the definition of $\mathcal{M}_{n,i}^{s,u}$, we can write
		$$\begin{array}{ll}
			&\displaystyle\sum\limits_{i=1}^{C_{0}2^{ds}}\epsilon_{i}\sum\limits_{\substack{J:J\in \mathcal{I}^{n}_{s,u}\\D(J)\in \mathcal{M}^{n,i}_{s,u}}}H_{L(J)}*B_{L(J)-s, J}
			=\displaystyle\sum\limits_{i=1}^{C_{0}2^{ds}}\epsilon_{i}\sum_{j\in\mathbb{Z}}\sum\limits_{\substack{J:J\in \mathcal{I}^{n}_{s,u},\\D(J)\in \mathcal{M}^{n,i}_{s,u}\\L(J)=j}}H_{j}*B_{j-s, J}\\
			&=\displaystyle\sum\limits_{j\in\mathbb{Z}}\sum\limits_{\substack{J:J\in \mathcal{I}^{n}_{s,u},\\D(J)\in\mathcal{M}^{n,i}_{s,u}\\L(J)=j}}\epsilon_{i(J)}H_{j}*B_{j-s, J}=\displaystyle\sum\limits_{\substack{J:J\in \mathcal{I}^{n}_{s,u},\\D(J)\in\mathcal{M}^{n,i}_{s,u}}}\epsilon_{i(J)}H_{L(J)}*B_{L(J)-s, J},
		\end{array}$$
		where $\epsilon_{i}\in\{-1,1\}$ and $\epsilon_{i(J)}=\epsilon_{i}$ if $J\in \mathcal{I}^{n}_{s,u}$, 
		$D(J)\in\mathcal{M}^{n,i}_{s,u}$. By (2.14) and Lemma \ref{lem2.4}, we obtain
		$$\begin{array}{ll}
			&\displaystyle\Big\|\mathcal{V}_2\Big(\Big\{\sum\limits_{j\geq k}\sum\limits_{\substack{J\in \mathcal{I}^{n}_{s,u}\\L(J)=j}}H_{j}*B_{j-s,J}\Big\}_{k\in\mathbb{Z}}\Big)\Big\|_{2}\\
			&\qquad\leq\displaystyle\Big\|\mathcal{V}_2\Big(\Big\{\sum\limits_{i=1}^{l}\sum\limits_{\substack{J:J\in \mathcal{I}^{n}_{s,u},\\D(J)\in\mathcal{M}^{n,i}_{s,u}}}H_{L(J)}*B_{L(J)-s, J}\Big\}_{1\leq l\leq C_02^{ds}}\Big)\Big\|_{2}\\
			&\qquad\qquad\lesssim\displaystyle s\sup\limits_{\epsilon_{i}\in \{-1,1\}}\Big\|\sum\limits_{i=1}^{C_{0}2^{ds}}\epsilon_{i}\sum\limits_{\substack{J:J\in \mathcal{I}^{n}_{s,u},\\D(J)\in\mathcal{M}^{n,i}_{s,u}}}H_{L(J)}*B_{L(J)-s, J}\Big\|_{2}\\
			&\leq\displaystyle s\sup\limits_{\epsilon_{J}\in \{-1,1\}}\Big\|\sum\limits_{J\in \mathcal{I}^{n}_{s,u}}\epsilon_{J}H_{L(J)}*B_{L(J)-s, J}\Big\|_{2}.
		\end{array}$$
		By the definition of $B_{j-s, J}$, we can write
		$$\sum\limits_{\substack{J\in \mathcal{I}^{n}_{s,u}\\L(J)=j}}\epsilon_{J}B_{j-s, J}
		=\sum\limits_{\substack{J\in \mathcal{I}^{n}_{s,u}\\L(J)=j}}\epsilon_{J}\sum_{\substack{Q\in \mathcal{Q}_{j-s}^{\alpha}\\Q\subset J} }b_{Q}.$$
		Further we obtain
		$$\begin{array}{ll}
			&\displaystyle\Big\|\mathcal{V}_2\Big(\Big\{\sum\limits_{j\geq k}\sum\limits_{\substack{J\in \mathcal{I}^{n}_{s,u}\\L(J)=j}}H_{j}*B_{j-s,J}\Big\}_{k\in\mathbb{Z}}\Big)\Big\|_2\\
			&\lesssim\displaystyle s\sup\limits_{\epsilon_{J}\in \{-1,1\}}\Big\|\sum_{j\in\mathbb{Z}}H_{j}*\Big(\sum\limits_{\substack{J\in \mathcal{I}^{n}_{s,u}\\L(J)=j}}\epsilon_{J}
			\sum\limits_{\substack{Q\in \mathcal{Q}_{j-s}^{\alpha}\\Q\subset J} }b_{Q}\Big)\Big\|_{2}.
		\end{array}\eqno(2.15)$$
		Next we prove that for some $\delta>0$,
		$$\sup\limits_{\epsilon_{J}\in \{-1,1\}}\Big\|\sum\limits_{j\in\mathbb{Z}}H_{j}*\Big(\sum\limits_{\substack{J\in \mathcal{I}^{n}_{s,u}\\L(J)=j}}
		\epsilon_{J}\sum\limits_{\substack{Q\in \mathcal{Q}_{j-s}^{\alpha}\\Q\subset J} }b_{Q}\Big)\Big\|_{2}\lesssim_{d}2^{-n}2^{-2\delta s}\alpha\Big(\sum\limits_{j\in\mathbb{Z}}\sum\limits_{Q\in \mathcal{Q}^{\alpha}_{j-s}}|Q|\Big)^{1/2}.\eqno(2.16)$$
		Now for each $n\geq 1$, $s\geq 200$, we define
		$$\mathfrak{Q}_{j-s}^{\alpha,n,s}=\{Q\in \mathcal{Q}_{j-s}^{\alpha}:\exists \ J\in \mathcal{I}^{n}_{s,u}, L(J)=j \ \mathrm{s.t.}\ J\supset Q\}.$$
		Write
		$$\sum\limits_{\substack{J\in \mathcal{I}^{n}_{s,u}\\L(J)=j}}\epsilon_{J}\sum\limits_{\substack{Q\in \mathcal{Q}_{j-s}^{\alpha}\\Q\subset J} }b_{Q}
		=\sum\limits_{Q\in \mathcal{Q}_{j-s}^{\alpha}}\sum\limits_{\substack{J\in \mathcal{I}^{n}_{s,u},\\J\supset Q, L(J)=j}}\epsilon_{J}b_{Q}
		=\sum\limits_{Q\in  \mathfrak{Q}_{j-s}^{\alpha,n,s}}\epsilon_{J(Q)}b_{Q},\eqno(2.17)$$
		where $J(Q)$ denotes the cube $J\in \mathcal{I}^{n}_{s,u}$ with $J\supset Q$ and
		$L(J)=j$, which is determined uniquely by $Q\in\mathfrak{Q}_{j-s}^{\alpha,n,s}$,
		so the value $J(Q)$ is well-defined for each $Q\in \mathfrak{Q}_{j-s}^{\alpha,n,s}$.
		
		Let $H^{s}_{j,\nu}$ be given in (2.11) and $\mathfrak{M}_N$ be given in (2.12).
		Applying \cite[v3, Lemma 4.6]{Lai}, we get the trivial bound
		$$\Big\|\sum\limits_{\nu\in \Lambda_{s}}\sum\limits_{j\in\mathbb{Z}}\big(H^{s}_{j,\nu}-P_{\nu}^{s}*H^{s}_{j,\nu}\big)*\Big(\sum\limits_{Q\in\mathscr{Q}_{j-s}}\mathfrak{b}_Q\Big)\Big\|^{3}_{3}\lesssim_{d} C_{s,2} \alpha^{2}\sum\limits_{j\in\mathbb{Z}}\sum\limits_{Q\in \mathscr{Q}_{j-s}}\|\mathfrak{b}_Q\|_{1},\eqno(2.18)$$
		where $\mathscr{Q}$, $\mathscr{Q}_{j-s}$, and $\mathfrak{b}_Q$ satisfy the
		conditions in Lemma \ref{lem2.5} and $C_{s,2}=2^{2s\gamma(d-1)+3([\frac{d}{2}]+1)}$.
		Note that $\mathfrak{M}_{N}\lesssim_{N} 1$. Applying the interpolation between 
		two inequalities appearing in Lemmas \ref{lem2.5} and \ref{lem2.6} and (2.18), 
		one obtains that for each $s\geq 200$,
		$$\Big\|\sum\limits_{j\in\mathbb{Z}}H_{j}*\Big(\sum\limits_{Q\in\mathscr{Q}_{j-s}}\mathfrak{b}_Q\Big)\Big\|^{2}_{2}\lesssim_d
		(2^{-\gamma s}+2^{-{2\vartheta}_{1}s}+2^{-2\vartheta_{2}s})\alpha\sum\limits_{j\in\mathbb{Z}}\sum_{Q\in \mathscr{Q}_{j-s}}\|\mathfrak{b}_Q\|_{1},\eqno(2.19)$$
		where $\mathscr{Q}$, $\mathscr{Q}_{j-s}$, and $\mathfrak{b}_Q$ satisfy all 
		conditions in Lemmas \ref{lem2.5} and \ref{lem2.6} and
		$$\vartheta_{1}=\frac{1}{4}(1-\varepsilon_{0})-\frac{3}{4}\gamma\Big(\frac{2}{3}(d-1)+[\frac{d}{2}]+1\Big),$$
		$$\vartheta_{2}=\frac{1}{4}((\varepsilon_{0}-\gamma)N_{1}-(\gamma+\varepsilon_{0})n)-\frac{3}{4}\gamma\Big(\frac{2}{3}(d-1)+[\frac{d}{2}]+1\Big).$$
		For fixed $s\geq 200$, applying (2.16) and (2.19) with $\mathscr{Q}_{j-s}$ 
		replaced by $\mathfrak{Q}_{j-s}^{\alpha,n,s}$ and $\mathfrak{b}_{Q}$ replaced 
		by $\epsilon_{J(Q)}b_{Q}$, then for each $\gamma$, $\varepsilon_0\in(0,1)$ 
		and $N_{1}\geq d+1$,
		$$\Big\|\sum\limits_{j\in\mathbb{Z}}H_{j}*\Big(\sum\limits_{\substack{J\in \mathcal{I}^{n}_{s,u}\\L(J)=j}}\epsilon_{J}\sum\limits_{\substack{Q\in \mathcal{Q}_{j-s}^{\alpha}
				\\Q\subset J} }b_{Q}\Big)\Big\|^{2}_{2}\lesssim_{d}(2^{-\gamma s}+2^{-{2\vartheta}_{1}s}+2^{-2\vartheta_{2} s})\alpha^{2}\sum\limits_{j\in\mathbb{Z}}\sum\limits_{Q\in\mathfrak{Q}_{j-s}^{\alpha,n,s}}|Q|.$$
		
		Now we choose $0<\iota\ll\gamma\ll \varepsilon_{0}\ll1$ and $N_{1}\geq d+1$ so 
		that $\vartheta_1,\vartheta_2>0$. Note that $Q\in\mathcal{Q}_{j-s}^{\alpha}$ 
		are mutually disjoint and $J\subset D(J)\subset F_{s,u}^{n-1}$ for each 
		$J\in \mathcal{I}_{s,u}^{n}$. Then we have
		$$\begin{array}{ll}
			&\displaystyle\sum\limits_{j\in\mathbb{Z}}\sum\limits_{\substack{Q\in \mathfrak{Q}_{j-s}^{\alpha,n,s}}}|Q|
			=\displaystyle\sum\limits_{j\in\mathbb{Z}}\sum\limits_{\substack{J:J\in \mathcal{I}_{s,u}^{n}\\L(J)=j}}\sum\limits_{\substack{Q\in \mathfrak{Q}_{j-s}^{\alpha,n,s}\\Q\subset J}}|Q|\\
			&\qquad\lesssim\displaystyle\min\Big\{\Big|\bigcup_{j\in\mathbb{Z}}\bigcup_{\substack{J:J\in \mathcal{I}_{s,u}^{n}\\L(J)=j}}J\Big|,\sum\limits_{j\in\mathbb{Z}}\sum_{Q\in \mathcal{Q}^{\alpha}_{j-s}}|Q|\Big\}\\
			&\qquad\qquad\qquad\lesssim\displaystyle\min\Big\{|F_{s,u}^{n-1}|,\sum\limits_{j\in\mathbb{Z}}\sum\limits_{Q\in \mathcal{Q}^{\alpha}_{j-s}}|Q|\Big\}.
		\end{array}$$
		This together with Lemma \ref{lem2.3} yields (2.16).
		
		Now we get from (2.7), (2.15) and (2.16) that
		$$\begin{array}{ll}
			&\displaystyle\Big\|\mathcal{V}_2\Big(\Big\{\sum\limits_{j\geq k}H_{j}*B_{j-s}\Big\}_{k\in\mathbb{Z}}\Big)\Big\|_2
			=\displaystyle\Big\|\mathcal{V}_2\Big(\Big\{\sum_{j\geq k}\sum_{u\in\{0,\frac{1}{3},\frac{2}{3}\}^{d}}\sum_{\substack{J\in \mathcal{I}^{0}_{s,u}\\l(J)=2^{j}}}H_{j}*B_{j-s, J}\Big\}_{k\in\mathbb{Z}}\Big)\Big\|_2\\
			&\qquad\leq\displaystyle\sum\limits_{u\in\{0,\frac{1}{3},\frac{2}{3}\}^{d}}\Big\|\mathcal{V}_{2}\Big(\Big\{\sum\limits_{j\geq k}\sum_{\substack{J\in\mathcal{I}^{0}_{s,u}\\l(J)=2^{j}}}H_{j}*B_{j-s, J}\Big\}_{k\in\mathbb{Z}}\Big)\Big\|_2\\
			&\qquad\qquad=\displaystyle\sum\limits_{u\in\{0,\frac{1}{3},\frac{2}{3}\}^{d}}\Big\|\mathcal{V}_{2}\Big(\Big\{\sum\limits_{j\geq k}\sum\limits_{n=1}^{\infty}\sum\limits_{\substack{J\in \mathcal{I}^{n}_{s,u} \\l(J)=2^{j}}}H_{j}*B_{j-s,J}\Big\}_{k\in\mathbb{Z}}\Big)\Big\|_{2}\\
			&\qquad\qquad\qquad\leq\displaystyle\sum\limits_{u\in\{0,\frac{1}{3},\frac{2}{3}\}^{d}}\sum\limits_{n=1}^{\infty}\Big\|\mathcal{V}_{2}\Big(\Big\{\sum\limits_{j\geq k}\sum\limits_{\substack{J\in \mathcal{I}^{n}_{s,u} \\l(J)=2^{j}}}H_{j}*B_{j-s, J}\Big\}_{k\in\mathbb{Z}}\Big)\Big\|_{2}\\
			&\qquad\qquad\qquad\qquad\lesssim_{d}\displaystyle s2^{-2\delta s}\alpha\Big(\sum\limits_{j\in\mathbb{Z}}\sum\limits_{Q\in \mathcal{Q}^{\alpha}_{j-s}}|Q|\Big)^{1/2}.
		\end{array}$$
		This proves (2.11) and completes the proof of Proposition \ref{pro2.1}.
	\end{proof}
	
	\medskip
	
	\section{Proof of Theorem \ref{thm1.1}}\label{S3}
	
	In this section we present the proof of Theorem \ref{thm1.1}. Without 
	loss of generality we assume $\|\Omega\|_{L^\infty(\mathbb{S}^{d-1})}=1$. 
	Let $f\in L^1(w)$,  $w\in A_1$. By density, 
	we may assume $f\in C_c^\infty(\mathbb{R}^{d})$.

	Let $M_\Omega$ be the following maximal operator with rough $\Omega$
	$$M_\Omega f(x)=\sup\limits_{r>0}\frac{1}{r^{d}}\int_{|y|\leq r}|\Omega(y)||f(x-y)|dy.$$
	It is well known that $\|M_{\Omega}\|_{L^{1}(w)\rightarrow L^{1,\infty}(w)}
	\lesssim_{d}[w]_{A_{1}}\|\Omega\|_{L^\infty(\mathbb{S}^{d-1})}$. Thus, 
	for (1.3), it is sufficient to prove
	$$w\Big(\Big\{x\in\mathbb{R}^{d}:\sup_{k\in\mathbb{Z}}\Big|\sum_{j\geq k}H_{j}*g\Big|>\alpha\Big\}\Big)\lesssim_{d}C(w)\alpha^{-1}\|f\|_{L^1(w)}\eqno(3.1)$$
	$$w\Big(\Big\{x\notin E:\sup_{k\in\mathbb{Z}}\Big|\sum_{j\geq k}H_{j}*b\Big|>\alpha\Big\}\Big)\lesssim_{d}C(w)\alpha^{-1}\|f\|_{L^1(w)}, \eqno(3.2)$$
	for all $\alpha>0$, where the constant $C(w)$ denotes $[w]_{A_{1}}[w]_{A_{\infty}}\log_{2}([w]_{A_{\infty}}+1)$, 
	$H_j$ is defined in (2.5). Set
	$$r_{w}=1+{\frac{1}{\tau_{d}[w]_{A_{\infty}}}}<p_{w}=1+{\frac{1}{\log([w]_{A_{\infty}}+1)}}.$$
	In view of Fefferman--Stein type inequality (see \cite[Theorem 1.3]{DPHL}) 
	and Lemma \ref{lem2.1}, the inequality (3.1) follows from
	$$\begin{array}{ll}
		\displaystyle w\Big(\Big\{x\in\mathbb{R}^{d}:\sup_{k\in\mathbb{Z}}\Big|\sum_{j\geq k}H_{j}*g\Big|>\alpha\Big\}\Big)&\leq\displaystyle\alpha^{-p_{w}}\Big\|\sup_{k\in\mathbb{Z}}\Big|\sum_{j\geq k}H_{j}*g\Big|\Big\|^{p_{w}}_{L^{p_{w}}(w)}\\
		&\lesssim_{d}\displaystyle\alpha^{-p_{w}}p_{w}^{2p_{w}}p_{w}'(r'_{w})^{p_{w}(1+\frac{1}{p'_{w}})}\|g\|^{p_{w}}_{L^{p_{w}}(M_{r_{w}}w)}\\
		&\lesssim_{d}\alpha^{-p_{w}} p_{w}^{2p_{w}}p_{w}'(r'_{w})^{2p_{w}-1}\|g\|^{p_{w}}_{L^{p_{w}}(M_{r_{w}}w)}\\
		&\lesssim_{d}[w]_{A_{1}}[w]_{A_{\infty}}\log_{2}([w]_{A_{\infty}}+1)\alpha^{-1}\|f\|_{L^1(w)}.
	\end{array}$$
	On the other hand, observe that $H_{j}*B_{j-s}=0$ if $s<200$. Then (3.2) 
	follows from
	$$\begin{array}{ll}
		&\displaystyle w\Big(\Big\{x\in\mathbb{R}^{d}:\sum\limits_{s\geq 200}\sup_{k\in\mathbb{Z}}\Big|\sum_{j\geq k }H_{j}*B_{j-s}(x)\Big|>\alpha\Big\}\Big)\\
		&\lesssim_{d}[w]_{A_1}[w]_{A_\infty}(\log[w]_{A_\infty}+1){\alpha}^{-1}\|f\|_{L^1(w)}.
	\end{array}\eqno(3.3)$$
	
	Now we prove (3.3). First of all, for each 
	non-negative function $v$ and $s\geq 200$, by Fubini theorem and
	$\|b_{Q}\|_{1}\lesssim_{d}\alpha|Q|$, we have the following trivial 
	bound
	$$\begin{array}{ll}
		\displaystyle\Big\|\sup_{k\in\mathbb{Z}}\Big|\sum_{j\geq k}H_{j}*B_{j-s}\Big|\Big\|_{L^{1}(v)}&\lesssim\displaystyle\int_{\mathbb{R}^{d}}\sum_{j\in\mathbb{Z}}\int_{|x-y|\lesssim 2^{j}}|H_{j}(x-y)|v(x)dx|B_{j-s}(y)|dy\\
		&\lesssim_{d}\displaystyle\|\Omega\|_{L^\infty(\mathbb{S}^{d-1})}\sum\limits_{j\in\mathbb{Z}}\sum\limits_{\substack{Q\in \mathcal{Q}_{\alpha},\\L(Q)=j-s}}\int_{Q}|b_{Q}(y)|\inf_{z\in Q}Mv(z)dy\\
		&\lesssim_{d}\displaystyle\sum_{j\in\Z}\sum_{Q\in \mathcal{Q}_{j-s}^{\alpha}}\alpha|Q|\inf_{z\in Q}Mv(z).
	\end{array}$$
	This together with Chebyshev's inequality imply that
	$$v\Big(\Big\{x\in\mathbb{R}^{d}:\sup\limits_{k\in\mathbb{Z}}\Big|\sum_{j\geq k}H_{j}*B_{j-s}(x)\Big|>\epsilon\alpha\Big\}\Big)
	\lesssim_{d}\epsilon^{-1} \sum_{j\in\Z} \sum_{\substack{Q\in \mathcal{Q}_{j-s}^{\alpha}}}|Q|\inf_{z\in Q}Mv(z).$$
	Applying proposition \ref{pro2.1} and Chebyshev's inequality, we obtain  that for each $\epsilon\in(0,1)$ 
	and $\alpha>0$, there exists a constant $\delta>0$ such that for each 
	$s\geq200$, the following estimate holds
	$$\Big|\Big\{x\in\mathbb{R}^{d}:\sup_{k\in\mathbb{Z}}\Big|\sum\limits_{j\geq k}H_{j}*B_{j-s}(x)\Big|>\epsilon\alpha\Big\}\Big|\lesssim_d\epsilon^{-2}2^{-\delta s}\sum\limits_{j\in\mathbb{Z}}\sum\limits_{Q\in \mathcal{Q}^{\alpha}_{j-s}}|Q|.\eqno(3.4)$$
	 Now we shall apply the interpolation arguments 
	of Vargas \cite{Var} to get the desired weighted estimates. Following the 
	strategy of \cite{BM}, which is inspired by the method given in \cite{LPRR}, 
	we can prove that for each $u>0$ and non-negative function $v$, we have
	$$\int_{E_{\epsilon\alpha}^{s}}\min\{v(x),u\} dx\lesssim_{d}\epsilon^{-2}\sum_{Q\in \mathcal{Q}_{\alpha}}|Q|\min\Big\{u2^{-\delta s},\inf_{z\in Q}Mv(z)\Big\},\eqno(3.5)$$
	where
	$$E_{\epsilon\alpha}^{s}=\{x\in\mathbb{R}^{d}:\sup_{k\in\mathbb{Z}}|\sum\limits_{j\geq k}H_{j}*B_{j-s}(x)|>\epsilon\alpha\}.$$
	Indeed, for each $u>0$, we set
	$$\mathcal{Q}^{1,\alpha}_{j-s,u}=\Big\{Q\in \mathcal{Q}_{\alpha}: L(Q)=j-s,\inf_{z\in Q}Mv(z)\leq 2^{-\delta s}u\Big\},$$
	$$\mathcal{Q}^{2,\alpha}_{j-s,u}=\Big\{Q\in\mathcal{Q}_{\alpha}: L(Q)=j-s,\inf_{z\in Q}Mv(z)>2^{-\delta s}u\Big\}.$$
	For each $j$, split $\mathcal{Q}^{\alpha}_{j-s}=\mathcal{Q}^{1,\alpha}_{j-s,u}
	\cup\mathcal{Q}^{2,\alpha}_{j-s,u}$, then we can write $B_{j-s}=B^{1,
		\alpha}_{j-s,u}+B^{2,\alpha}_{j-s,u}$, where
	$$B^{1,\alpha}_{j-s,u}=\sum_{Q\in \mathcal{Q}_{j-s,u}^{1,\alpha}}b_{Q},\ \ \ \ \ B^{2,\alpha}_{j-s,u}=\sum_{Q\in \mathcal{Q}_{j-s,u}^{2,\alpha}}b_{Q}.$$
	Set
	$$E_{\epsilon\alpha/2}^{\imath,u,s}=\Big\{x\in\mathbb{R}^{d}:\sup_{k\in\mathbb{Z}}\Big|\sum_{j\geq k}H_{j}*B^{\imath,u}_{j-s}(x)\Big|>\epsilon\alpha/2\Big\},\ \imath=1,2.$$
	Then we split $E_{\epsilon\alpha}^{s}$ in two parts,
	$$E_{\epsilon\alpha}^{s}\subset E_{\epsilon\alpha/2}^{1,u,s}\cup E_{\epsilon\alpha/2}^{2,u,s}.$$
	Hence we obtain
	$$\begin{array}{ll}
		&\displaystyle\int_{E_{\epsilon\alpha}^{s}}\min\{v(x),u\} dx\lesssim\displaystyle\int_{E_{\epsilon\alpha/2}^{1,u,s}}v(x)dx+u\int_{E_{\epsilon\alpha/2}^{2,u,s}}dx\\
		&\qquad\qquad\qquad\qquad\lesssim\displaystyle\sum\limits_{j\in\mathbb{Z}}\sum\limits_{Q\in \mathcal{Q}_{j-s,u}^{1,\alpha}}|Q|\inf_{z\in Q}Mv(x)
		+\epsilon^{-2}2^{-\delta s}u\sum\limits_{j\in\mathbb{Z}}\sum_{Q\in \mathcal{Q}_{j-s,u}^{2,\alpha}}|Q|.
	\end{array}$$
	This yields (3.5). By Fubini theorem and the following identity
	$$\int_{0}^{\infty}\min\{v(x),u\}u^{-1+\theta}\frac{du}{u}=\theta^{-1}(1-\theta)^{-1}v(x)^{\theta},\eqno(3.6)$$
	we obtain that
	$$\int_{E_{\epsilon\alpha}^{s}}v(x)^{\theta}dx=\theta(1-\theta)\epsilon^{-2}\int_{0}^{\infty}\sum\limits_{Q\in \mathcal{Q}_{\alpha}}|Q|\min\Big\{u2^{-\delta s},\inf_{z\in Q}Mv(z)\Big\}u^{-1+\theta}\frac{du}{u},$$
	which leads to
	$$\frac{1}{\epsilon^{2}}\sum\limits_{Q\in\mathcal{Q}_{\alpha}}|Q|2^{-\delta(1-\theta)s}(\inf_{z\in Q}Mv(z))^{\theta}
	\lesssim_d\frac{2^{-\delta(1-\theta)s}}{\epsilon^{2}\alpha}\sum\limits_{Q\in \mathcal{Q}_{\alpha}}\int_{Q}|b_{Q}(y)|(\inf_{z\in Q}Mv(z))^{\theta}dy.$$
	Taking $v=w^{\frac{1}{\theta}}$ in the above inequality, one gets
	$$w(E_{\epsilon\alpha}^{s})\lesssim_d\frac{2^{-\delta(1-\theta)s}}{\epsilon^{2}\alpha}\sum\limits_{Q\in \mathcal{Q}_{\alpha}}
	\int_{Q}|b_{Q}(y)|M_{\frac{1}{\theta}}w(y)dy\lesssim 2^{-\delta(1-\theta)s}\epsilon^{-2}\alpha^{-1} \|f\|_{L^1(M_{{\frac{1}{\theta}}}w)}.\eqno(3.7)$$
	To get the better bound, we need to split the set $\{s:s\geq 200\}$ in 
	two parts. Since $w\in A_{1}$, then we get by Lemma 
	\ref{lem2.1} that for any cube $Q$ and 
	$\theta^{-1}\in [1,1+(\tau_{d}[w]_{A_{\infty}})^{-1}]$, $M_{\frac{1}{\theta}}w
	\leq 2Mw\leq [w]_{A_{1}}w$, where the constant $\tau_{d}$ is given in 
	Lemma \ref{lem2.1}. Now we set
	$$\theta=\frac{\tau_{d}[w]_{A_{\infty}}}{1+\tau_{d}[w]_{A_{\infty}}},\ \ s_{0}(\delta)=\Big[\frac{2}{\delta(1-\theta)}\log_{2}([w]_{A_{\infty}}+1)\Big],$$
	$$\epsilon_{s}=C\delta(1-\theta)2^{-\frac{\delta(1-\theta)(s-s_{0}(\delta))}{3}},\ \mathrm{\ so\ that }\ \sum\limits_{s=s_{0}(\delta)+1}^\infty\epsilon_{s}=1/2.$$
	Write
	$$\Big\{x\in\mathbb{R}^{d}:\sum\limits_{s=200}^\infty\sup\limits_{k\in\mathbb{Z}}\Big|\sum\limits_{j\geq k}H_{j}*B_{j-s}(x)\Big|>\alpha\Big\}
	\subset G_{\alpha/2}^{s} \cup\bigcup_{s=s_{0}(\delta)+1}^\infty E_{\epsilon_{s}\alpha}^{s},\eqno(3.8)$$
	where
	\begin{align*}
		G_{\alpha/2}^{s}&:=\Big\{x\in\mathbb{R}^{d}:\sum\limits_{s=200}^{s_{0}(\delta)}\sup\limits_{k\in\mathbb{Z}}\Big|\sum\limits_{j\geq k }H_{j}*B_{j-s}(x)\Big|>\alpha/2\Big\},\\
		E_{\epsilon_{s}\alpha}^{s}&:=\Big\{x\in\mathbb{R}^{d}:\sup\limits_{k\in\mathbb{Z}}\Big|\sum\limits_{j\geq k}H_{j}*B_{j-s}(x)\Big|>\epsilon_{s}\alpha\Big\},\ \ s\geq s_0(\delta)+1.
	\end{align*}
	
	Since $w\in A_{1}$, then
	$$w(G_{\alpha/2}^{s})\lesssim\alpha^{-1}\Big\|\sum\limits_{s=200}^{s_0(\delta)}\sup\limits_{k\in\mathbb{Z}}\Big|\sum\limits_{j\geq k}H_{j}*B_{j-s}\Big|\Big\|_{L^1(w)}
	\lesssim_d s_{0}(\delta)[w]_{A_{1}}\alpha^{-1}\|f\|_{L^1(w)},$$
	$$\sum\limits_{s=s_0(\delta)+1}^\infty w(E^{s}_{\epsilon_{s}\alpha})
	\lesssim_d\alpha^{-1}\sum_{s=s_{0}(\delta)+1}^\infty(\epsilon_{s})^{-2}2^{-\delta(1-\theta)s}\|f\|_{L^1(M_{{\frac{1}{\theta}}}w)}.$$
	Thus, one obtains
	$$\begin{array}{ll}
		&\displaystyle w\Big(\Big\{x\in\mathbb{R}^{d}:\sum\limits_{s=200}^\infty\sup\limits_{k\in\mathbb{Z}}\Big|\sum_{j\geq k}H_{j}*B_{j-s}(x)\Big|\Big)>\alpha\Big\}\Big|\\
		&\lesssim_{d}\displaystyle\alpha^{-1} \Big(s_{0}(\delta)+\frac{2^{-s_{0}(\delta)\delta(1-\theta)}}{(1-\theta)^{3}}\Big)\|f\|_{L^1(M_{{\frac{1}{\theta}}}w)}\\
		&\lesssim_d[w]_{A_{1}}[w]_{A_{\infty}} \log_{2}([w]_{A_{\infty}}+1)\alpha^{-1}\|f\|_{L^1(w)}.
	\end{array}$$
	This proves (3.3) and completes the proof of Theorem \ref{thm1.1}. $\hfill\Box$
	
	\bigskip

	\section{Proof of Theorem \ref{thm1.2}}\label{S4}
	
	This section aims to prove Theorem \ref{thm1.2}. In what follows, we assume $\|\Omega\|_{L^\infty(\mathbb{S}^{d-1})}=1$. 
	Let $f\in L^1(w)$, $w\in A_{1}$. By density, 
	we may assume $f\in C_c^\infty(\mathbb{R}^{d})$. For (1.14), it suffices to show that
	$$\sup_{\lambda>0}\Big\|\lambda\mathcal{N}_{\lambda}\Big(\Big\{\sum\limits_{j\in\mathbb{Z}}(H_{j}\one_{B^{c}_{\varepsilon}})*g\Big\}_{\varepsilon\in \mathbb{R}^{+}}\Big)^{1/2}\Big\|_{L^2(w)}\lesssim_{d,w}\|g\|_{L^2(w)},\eqno(4.1)$$
	and for each $\lambda>0$,
	$$w\Big(\Big\{x\notin E:\lambda\mathcal{N}_{\lambda}\Big(\Big\{\sum_{s=200}^\infty\sum\limits_{j\in\mathbb{Z}}(H_{j}\one_{B^{c}_{2^{k}}})*B_{j-s}\Big\}_{k\in\mathbb{Z}}\Big)^{1/2}(x)>\alpha\Big\}\Big)
	\lesssim_{d,w}\alpha^{-1}\|f\|_{L^1(w)},\eqno(4.2)$$
	for all $\alpha>0$.
	The estimate (4.1) follows from Theorem B. Now we prove (4.2). For 
	convenience, we denote
	$$O_{k}=\{x\in\mathbb{R}^{d}:|x|\leq2^{k}\},\ \ \ A_{k}=\{x\in\mathbb{R}^{d}:2^{k-1}<|x|\leq 2^{k}\},\ \ \ k\in\mathbb{Z}.$$
	By the support of $H_{j}$ and (1.7), we write
	$$\begin{array}{ll}
		&\displaystyle \lambda \mathcal{N}_{\lambda}\Big(\Big\{\sum_{s=200}^\infty\sum\limits_{j\in\mathbb{Z}}(H_{j}\one_{B^{c}_{2^{k}}})*B_{j-s}\Big\}_{k\in\mathbb{Z}}\Big)^{1/2}(x)\\
		&\leq\displaystyle \lambda \mathcal{N}_{\lambda/2}\Big(\Big\{\sum\limits_{s\geq 200}\sum\limits_{j\geq k+2}H_{j}*B_{j-s}\Big\}_{k\in\mathbb{Z}}\Big)^{1/2}(x)\\
		&\qquad+\displaystyle\lambda \mathcal{N}_{\lambda/2}\Big(\Big\{\sum\limits_{s\geq 200}(H_{k+1}\one_{A_{k+1}})*B_{k+1-s}\Big\}_{k\in\mathbb{Z}}\Big)^{1/2}(x)\\
		&\qquad\qquad\lesssim\displaystyle\sum\limits_{s=200}^\infty V_2\Big(\Big\{\sum_{j\geq k}H_{j}*B_{j-s}\Big\}_{k\in\mathbb{Z}}\Big)
		+\sum\limits_{s=200}^\infty V_2\Big(\Big\{H_{k}\one_{A_{k}})*B_{k-s}\Big\}_{k\in\mathbb{Z}}\Big)\\
		&\lesssim\displaystyle\sum\limits_{s=200}^\infty V_2\Big(\Big\{\sum_{j\geq k}H_{j}*B_{j-s}\Big\}_{k\in\mathbb{Z}}\Big)
		+\sum\limits_{s=200}^\infty\Big(\sum\limits_{j\in\mathbb{Z}}|(H_{j}\one_{A_{j}})*B_{j-s}|^{2}\Big)^{1/2}.
	\end{array}$$
	By Proposition \ref{pro2.1}, there exists a constant $\delta>0$ such that
	$$\Big|\Big\{x\in\mathbb{R}^{d}:V_2\Big(\Big\{\sum\limits_{j\geq k}H_{j}*B_{j-s}\Big\}_{k\in\mathbb{Z}}\Big)(x)>\epsilon\alpha\Big\}\Big|
	\lesssim_d\frac{2^{-\delta s}}{\epsilon^{2}}\sum\limits_{j\in\mathbb{Z}}\sum\limits_{Q\in \mathcal{Q}^{\alpha}_{j-s}}|Q|.\eqno(4.3)$$
	Next we shall prove that  there exists a constant $\delta_{1}>0$ such that
	$$\Big|\Big\{x\in\mathbb{R}^{d}:\Big(\sum\limits_{j\in\mathbb{Z}}|(H_{j}\one_{A_{j}})*B_{j-s}(x)|^{2}\Big)^{1/2}>\epsilon\alpha\Big\}\Big|
	\lesssim_d\frac{2^{-\delta_{1}s}}{\epsilon^{2}}\sum\limits_{j\in\mathbb{Z}}\sum\limits_{Q\in \mathcal{Q}^{\alpha}_{j-s}}|Q|.\eqno(4.4)$$
	We use Khintchine inequality and Seeger's estimates (see \cite[Lemmas 2.1-2.2]{See1}
	and \cite[Lemma A.1]{BS}) to prove (4.4). Let $\psi\in C_{c}^{\infty}(\mathbb{R})$ 
	be a bump function near the origin, that is, supported on the interval $[-2^{-10},
	2^{-10}]$ and $\int_{\mathbb{R}}\psi(x)dx=1$. Set $\psi_{j-\iota s}(x)=
	2^{-j+\iota s}\psi(2^{-j+\iota s}x)$. Write
	$$(H_{j}\one_{A_{j}})(x)={H}^{\iota}_{j}(x)+S^{\iota}_{j}(x),$$
	where 
	$${H}^{\iota}_{j}(x)={H}^{\iota}_{j}(r,\theta)=(H_{j}\one_{A_{j}})(\cdot,\theta)*\psi_{j-\iota s}(r),$$
	$$S^{\iota}_{j}(x)=S^{\iota}_{j}(r,\theta)=(H_{j}\one_{A_{j}})(r,\theta)-{H}^{\iota}_{j}(r,\theta).$$
	Let $\bar{\gamma}(r)=\phi(2r)-\phi(r)$, where $\phi$ is given in (1.17). Then
	$$H^{\iota}_{j}(r,\theta)=\Omega(\theta)\int_{\mathbb{R}}\rho^{-d}\bar{\gamma}_{j}(\rho)\one_{A_{j}}(\rho\theta)\psi_{j-\iota s}(r-\rho)d\rho,$$
	$$S^{\iota}_{j}(r,\theta)=\Omega(\theta)\Big(r^{-d}\bar{\gamma}_{j}(r)\one_{A_{j}}(r\theta)-\int_{\mathbb{R}}\rho^{-d}\bar{\gamma}_{j}(\rho)\one_{A_{j}}(\rho\theta)\psi_{j-\iota s}(r-\rho)d\rho\Big).$$
	Note that
	$$\mathfrak{M}_{N}:=\sup\limits_{j\in\mathbb{Z}}\sup\limits_{r>0}\sup\limits_{\theta\in\mathbb{S}^{d-1}}
	\sup_{0\leq l\leq N} r^{d+l}\big|\partial_{r}^{l}{H}^{\iota}_{j}(\cdot,\theta)\big|\lesssim_{N}2^{\iota (N+1)s}\|\Omega\|_{L^\infty(\mathbb{S}^{d-1})}.\eqno(4.5)$$
	Let $\Gamma_{\nu}^{s}$ and $P^{s}_{\nu}$ be defined in (2.2) and (2.3). Define 
	${H}^{\iota,s}_{j,\nu}(x)={H}^{\iota}_{j}(x)\Gamma_{\nu}^{s}(x)$. By the 
	arguments similar to those used to derive \cite[Lemmas 2.1-2.2]{See1}, one 
	obtains
	$$\Big\|\sum\limits_{j\in\mathbb{Z}}\epsilon_{j}
	\Big(\sum\limits_{\nu\in \Lambda_{s}}P_{\nu}^{s}*H^{\iota,s}_{j,\nu}\Big)*B_{j-s}\Big\|^{2}_{2}\lesssim_d\mathfrak{M}_{0}^{2}2^{-s\gamma}\alpha\sum\limits_{j\in\mathbb{Z}}\sum\limits_{Q\in \mathcal{Q}^{\alpha}_{j-s}}\|b_{Q}\|_{1},\eqno(4.6)$$
	where $\epsilon_{j}\in\{-1,1\}$ and for each $\varepsilon_0\in(0,1)$, 
	$N_{1}\geq d+1$,
	$$\Big\|\sum\limits_{j\in\mathbb{Z}}\epsilon_{j}\sum\limits_{\nu\in\Lambda_{s}}
	(H^{\iota,s}_{j,\nu}-P_{\nu}^{s}*H^{\iota,s}_{j,\nu})*B_{j-s}\Big\|_{1}\lesssim_d(\mathfrak{M}_{0}2^{-s\eta_{1}}+\mathfrak{M}_{N_{1}}2^{-s\eta_{2}})\sum\limits_{j\in\mathbb{Z}}\sum\limits_{Q\in \mathcal{Q}^{\alpha}_{j-s}}\|b_{Q}\|_{1},\eqno(4.7)$$
	where $\eta_{1}=1-\varepsilon_{0}$, $\eta_{2}=(\varepsilon_{0}-\gamma)N_{1}-
	d\varepsilon_{0}-d\gamma$, $\epsilon_{j}\in\{-1,1\}$. It follows from 
	(4.5)--(4.7) and Khintchine inequality that
	$$\Big|\Big\{x\in\mathbb{R}^{d}:\Big(\sum\limits_{j\in\mathbb{Z}}|{H}^{\iota}_{j}*B_{j-s}(x)|^{2}\Big)^{1/2}>\epsilon\alpha/2\Big\}\Big|
	\lesssim_d\frac{2^{-\delta_{1}s}}{\epsilon^{2}}\sum\limits_{j\in\mathbb{Z}}\sum\limits_{Q\in \mathcal{Q}^{\alpha}_{j-s}}|Q|.\eqno(4.8)$$
	For $S^{\iota}_{j}$, if $s\geq 3\iota^{-1}$, we get by mean value theorem that
	$$\begin{array}{ll}
		&\displaystyle\Big\|\Big(\sum\limits_{j\in\mathbb{Z}}|S^{\iota}_{j}*B_{j-s}|^{2}\Big)^{1/2}\Big\|_{1}\\
		&\leq\displaystyle\sum\limits_{j\in\mathbb{Z}}\int_{\mathbb{S}^{d-1}}|\Omega(\theta)|\int_{0}^{\infty}\int_{\mathbb{R}}|\Delta_{-\varrho}(r^{-d}\bar{\gamma}_{j}(r)\one_{A_{j}}(r\theta))|
		\psi_{j-\iota s}(\rho)d\rho \|B_{j-s}\|_{1}r^{d-1}drd\sigma(\theta)\\
		&\lesssim\displaystyle\sum\limits_{j\in\mathbb{Z}}\|B_{j-s}\|_{1}\int_{\mathbb{R}}\Big(\int_{0}^{\infty}|\Delta_{-\rho}(r^{-d}\bar{\gamma}_{j}(r)\one_{A_{j}}(r\theta))|r^{d-1}dr\Big)\psi_{j-\iota s}(\rho)d\rho\\
		&\lesssim\displaystyle \min\{1,2^{-\iota s+3}\}\sum\limits_{j\in\mathbb{Z}}\|B_{j-s}\|_{1},
	\end{array}$$
	where $\Delta_{-\varrho}h=h(\cdot-\rho)-h$. This together with (4.8) yields (4.4).
	
	In addition, for each non-negative function $v$, we also have the following 
	trivial weighted estimates:
	$$\Big\|V_2\Big(\Big\{\sum\limits_{j\geq k}H_{j}*B_{j-s}\Big\}_{k\in\mathbb{Z}}\Big)\Big\|_{L^{1}(v)}
	\lesssim_d\sum\limits_{j\in \Z}\sum\limits_{\substack{Q\in \mathcal{Q}_{j-s}^{\alpha}}}\|b_{Q}\|_{1}\inf_{z\in Q}Mv(z),\eqno(4.9)$$
	$$\Big\|\Big(\sum\limits_{j\in\mathbb{Z}}|(H_{j}\one_{A_{j}})*B_{j-s}|^{2}\Big)^{1/2}\Big\|_{L^{1}(v)}
	\lesssim_d\sum\limits_{j\in \Z}\sum\limits_{\substack{Q\in \mathcal{Q}_{j-s}^{\alpha}}}\|b_{Q}\|_{1}\inf_{z\in Q}Mv(z).\eqno(4.10)$$
	The following arguments are similar to those used to derive Theorem \ref{thm1.1}. Set $\delta_{2}=\min\{\delta,\delta_{1}\}$. Define
	$$\mathcal{E}_{\epsilon\alpha}^{s,1}=\Big\{x\in\mathbb{R}^{d}:V_2\Big(\Big\{\sum\limits_{j\geq k}H_{j}*B_{j-s}\Big\}_{k\in\mathbb{Z}}\Big)(x)>\epsilon\alpha\Big\},$$
	$$\mathcal{E}_{\epsilon\alpha}^{s,2}=\Big\{x\in\mathbb{R}^{d}:\Big(\sum\limits_{j\in\mathbb{Z}}|(H_{j}\one_{A_{j}})*B_{j-s}(x)|^{2}\Big)^{1/2}>\epsilon\alpha\Big\}.$$
	Then we define
	$$\mathcal{E}_{\epsilon\alpha/2}^{\imath,u,s,1}=\Big\{x\in\mathbb{R}^{d}:V_2\Big(\Big\{\sum\limits_{j\geq k}H_{j}*B^{\imath,\alpha,s}_{j-s,u}\Big\}_{k\in\mathbb{Z}}\Big)(x)>\epsilon\alpha/2\Big\},\ \imath=1,2,$$
	$$\mathcal{E}_{\epsilon\alpha/2}^{\imath,u,s,2}=\Big\{x\in\mathbb{R}^{d}:\Big(\sum\limits_{j\in\mathbb{Z}}|(H_{j}\one_{A_{j}})*B^{\imath,\alpha,s}_{j-s,u}(x)|^{2}\Big)^{1/2}>\epsilon\alpha/2\Big\},\ \imath=1,2,$$
		where $B^{1,\alpha,s}_{j-s,u}$, $B^{2,\alpha,s}_{j-s,u}$ are defined analogously to that in the proof of Theorem \ref{thm1.1}, that is, for each $u>0$ and $j\in\Z$, let $\mathcal{Q}^{1,\alpha,
		s}_{j-s,u}$ and $\mathcal{Q}^{2,\alpha,s}_{j-s,u}$ be given in the 
	proof of Theorem \ref{thm1.1} with $\delta$ replaced by $\delta_2$.  Then we define
	$$B^{1,\alpha,s}_{j-s,u}=\sum\limits_{Q\in \mathcal{Q}^{1,\alpha,s}_{j-s,u}}b_{Q},\ \ \ \ B^{2,\alpha,s}_{j-s,u}=\sum\limits_{Q\in \mathcal{Q}^{2,\alpha,s}_{j-s,u}}b_{Q}.$$
	Clearly, $\mathcal{E}_{\epsilon\alpha}^{s,i}\subset \mathcal{E}_{\epsilon
		\alpha/2}^{1,u,s,i}\cup \mathcal{E}_{\epsilon\alpha/2}^{2,u,s,i}$, $i=1,2$. 
	It follows from (4.3), (4.4), (4.9) and (4.10) that there exists a constant 
	$\delta_2>0$ such that for each $i=1,2$,
	$$\begin{array}{ll}
		&\displaystyle\int_{\mathcal{E}_{\epsilon\alpha}^{s,i}}\min\{v(x),u\} dx\lesssim\displaystyle\int_{\mathcal{E}_{\epsilon\alpha/2}^{1,u,s,i}}v(x)dx+u\int_{\mathcal{E}_{\epsilon\alpha/2}^{2,u,s,i}}dx\\
		&\qquad\qquad\qquad\qquad\lesssim_d\displaystyle\epsilon^{-2}\sum_{Q\in \mathcal{Q}_{\alpha}}|Q|\min\{u2^{-\delta_{2}s},\inf_{z\in Q}Mv(z)\}.
	\end{array}$$
	We get by (3.6) that for each $i=1,2$,
	$$\int_{\mathcal{E}_{\epsilon\alpha}^{s,i}}v(x)^{\theta}dx=\frac{1}{\epsilon^{2}}\sum\limits_{Q\in \mathcal{Q}_{\alpha}}|Q|2^{-\delta_{2}(1-\theta)s}(\inf_{z\in Q}Mv(z))^{\theta}.$$
	By the above inequality (with $v^{\theta}$ replaced by $w$), one gets
	$$\begin{array}{ll}
		&w(\mathcal{E}^{s,1}_{\epsilon\alpha})+w(\mathcal{E}^{s,2}_{\epsilon\alpha})
		\lesssim_d\displaystyle\frac{2^{-\delta_{2}(1-\theta)s}}{\epsilon^{2}}\sum\limits_{\substack{Q\in \mathcal{Q}_{\alpha}}}|Q|\inf_{z\in Q}M_{\frac{1}{\theta}}w(z)\\
		&\qquad\qquad\qquad\quad\lesssim_d\displaystyle\frac{2^{-\delta_{2}(1-\theta)s}}{\epsilon^{2}\alpha}\|f\|_{L^1(M_{\frac{1}{\theta}}w)}.
	\end{array}\eqno(4.11)$$
	Let $\theta$, $s_0(\delta_{2})$, $\epsilon_{s}$ be given in the proof of Theorem \ref{thm1.1} with $\delta$ replaced by $\delta_{2}$. By piegonhole principle, we have
	$$\begin{array}{ll}
		&\displaystyle\Big\{x\in\mathbb{R}^{d}:\lambda\mathcal{N}_{\lambda}\Big(\Big\{\sum\limits_{s=200}^\infty\sum_{j\geq k}H_{j}*B_{j-s}\Big\}_{k\in\mathbb{Z}}\Big)^{1/2}(x)>2\alpha\Big\}\\
		&\qquad\subset\displaystyle\Big\{x\in\mathbb{R}^{d}:\sum\limits_{s=200}^{s_{0}(\delta_2)}V_2\Big(\Big\{\sum\limits_{j\geq k}H_{j}*B_{j-s}\Big\}_{k\in\mathbb{Z}}\Big)(x)>\alpha/2\Big\}\\
		&\qquad\qquad\cup\displaystyle\bigcup\limits_{s\geq s_{0}+1}\Big\{x:V_2\Big(\Big\{\sum\limits_{j\geq k}H_{j}*B_{j-s}\Big\}_{k\in\mathbb{Z}}\Big)(x)>\epsilon_{s}\alpha\Big\}\\
		&=:\displaystyle\mathcal{G}_{\alpha/2}^{s,1}\cup \bigcup_{s\geq s_{0}+1} \mathcal{E}^{s,1}_{\epsilon_{s}\alpha}.
	\end{array}\eqno(4.12)$$
	Moreover, we have
	$$\begin{array}{ll}
		&\displaystyle\Big\{x\in\mathbb{R}^{d}:\lambda \mathcal{N}_{\lambda}\Big(\Big\{\sum\limits_{s\geq 200}(H_{k}\one_{A_{k}})*B_{k-s}\Big\}_{k\in\mathbb{Z}}\Big)^{1/2}(x)>4\alpha\Big\}\\
		&\qquad\subset\displaystyle\Big\{x\in\mathbb{R}^{d}:\sum\limits_{s=200}^{s_{0}(\delta_2)}\Big(\sum\limits_{j\in\mathbb{Z}}|(H_{j}\one_{A_{j}})*B_{j-s}(x)|^{2}\Big)^{1/2}>\alpha/2\Big\}\\
		&\qquad\qquad\cup\displaystyle\bigcup_{s\geq s_0+1}\Big\{x\in\mathbb{R}^{d}:\Big(\sum\limits_{j\in\mathbb{Z}}|(H_{j}\one_{A_{j}})*B_{j-s}(x)|^{2}\Big)^{1/2}>\epsilon_{s}\alpha\Big\}\\
		&=:\displaystyle\mathcal{G}_{\alpha/2}^{s,2}\cup \bigcup_{s\geq s_{0}+1} \mathcal{E}^{s,2}_{\epsilon_{s}\alpha}.
	\end{array}\eqno(4.13)$$
	Therefore, by (4.11)--(4.13) and Chebyshev inequality, we obtain
	$$\begin{array}{ll}
		&\displaystyle w\Big(\Big\{x\notin E:\lambda\mathcal{N}_{\lambda}\Big(\Big\{\sum\limits_{s\geq 200}\sum_{j\in\mathbb{Z}}(H_{j}\one_{O^{c}_{k}})*B_{j-s}\Big\}_{k\in\mathbb{Z}}\Big)^{1/2}(x)>10\alpha\Big\}\Big)\\
		&\qquad\leq\displaystyle w\Big(\Big\{x\notin E:\lambda \mathcal{N}_{{\lambda}/{2}}\Big(\Big\{\sum\limits_{s\geq 200}\sum\limits_{j\geq k}H_{j}*B_{j-s}\Big\}_{k\in\mathbb{Z}}\Big)^{1/2}(x)>2\alpha\Big\}\Big)\\
		&\qquad\qquad+\displaystyle w\Big(\Big\{x\notin E:\lambda \mathcal{N}_{{\lambda}/{2}}\Big(\Big\{\sum\limits_{s\geq 200}(H_{k}\one_{A_{k}})*B_{k-s}\Big\}_{k\in\mathbb{Z}}\Big)^{1/2}(x)>4\alpha\Big\}\Big)\\
		&\qquad\qquad\qquad\leq\displaystyle\sum\limits_{i=1}^{2}w(\mathcal{G}_{\alpha/2}^{s,i})+\sum\limits_{i=1}^{2}\sum\limits_{s\geq s_{0}+1} w(E^{s,i}_{\epsilon_{s}\alpha})\\
		&\lesssim_d\displaystyle\alpha^{-1}\Big(s_{0}(\delta_2)+\sum\limits_{s\geq s_{0}(\delta_2)+1}(\epsilon_{s})^{-2}2^{-\delta_{2}(1-\theta)s}\Big)\|f\|_{L^1(M_{{\frac{1}{\theta}}}w)}.
	\end{array}$$
	Since $w\in A_1$, applying Lemma \ref{lem2.1}, we have
	$$\begin{array}{ll}
		&\displaystyle w\Big(\Big\{x\notin E:\lambda \mathcal{N}_{\lambda}\Big(\Big\{\sum\limits_{s\geq 200}\sum\limits_{j\in\mathbb{Z}}(H_{j}\one_{B^{c}_{2^{k}}})*B_{j-s}\Big\}_{k\in\mathbb{Z}}\Big)^{1/2}(x)>\alpha\Big\}\Big)\\
		&\lesssim_d[w]_{A_{1}}[w]_{A_{\infty}} \log_{2}([w]_{A_{\infty}}+1)\alpha^{-1}\|f\|_{L^1(w)},
	\end{array}$$
	which proves (4.2) and completes the proof of (1.14). $\hfill\Box$
	
	\bigskip
	
	\section{Proof of Theorem \ref{thm1.3}}\label{S5}
	
	In this section we prove Theorem \ref{thm1.3}.  In what follows, we assume $\|\Omega\|_{L^\infty(\mathbb{S}^{d-1})}=1$. 
	Let $f\in L^1(w)$, $w\in A_1$. By density, 
	we may assume $f\in C_c^\infty(\mathbb{R}^{d})$. For (1.18), it is sufficient to prove that
	$$\sup_{\lambda>0}\Big\|\lambda \mathcal{N}_{\lambda}\Big(\Big\{\sum\limits_{j\in\mathbb{Z}}(H_{j}\phi_{\varepsilon})*g\Big\}_{\varepsilon\in\mathbb{R}^{+}}\Big)^{1/2}\Big\|_{L^2(w)}
	\lesssim_{d,w}\|g\|_{L^2(w)},\eqno(5.1)$$
	and for each $\lambda>0$,
	$$w\Big(\Big\{x\notin E:\lambda\mathcal{N}_{1.001\lambda}\Big(\Big\{\sum\limits_{j\in\mathbb{Z}}(H_{j}\phi_{\varepsilon})*b\Big\}_{\varepsilon\in\mathbb{R}^{+}}\Big)^{1/2}(x)>200\alpha\Big\}\Big)
	\lesssim_{d,w}\alpha^{-1}\|f\|_{L^1(w)},\eqno(5.2)$$
	for all $\alpha>0$, where $(H_{j}\phi_{\varepsilon})(x)=H_{j}(x)\phi_{\varepsilon}(|x|)$. 
	The estimate (5.1) follows from (4.1). Indeed, observe that
	$$\sum\limits_{j\in\mathbb{Z}}(H_{j}\phi_{\varepsilon})*g=\int_{0}^{\infty}\phi'(\omega)\Big(\sum\limits_{j\in\mathbb{Z}}(H_{j}\one_{B^{c}_{\varepsilon\omega}})*g\Big)d\omega,$$
	see \cite[Remark 3.2]{HLP}. This together with the fact that $J^{2,2}_{2}(w)$ 
	has an equivalence (semi)norm(see Lemma 2.1 and Corollary 2.2, \cite{MSZ}) 
	and	Minkowski inequality yield (5.1).
	
	Next we prove (5.2). In view of (1.7) and (1.9), for each $x \notin E$, 
	there exists a constant $C>0$ independent of $\lambda$ such that
	$$\begin{array}{ll}
		&\displaystyle\lambda\mathcal{N}_{1.001\lambda}\Big(\Big\{\sum\limits_{j\in\mathbb{Z}}(H_{j}\phi_{\varepsilon})*b\Big\}_{\varepsilon\in\mathbb{R}^{+}}\Big)^{1/2}(x)\\
		&\quad\leq\displaystyle\lambda N_{\lambda}\Big(\Big\{\sum\limits_{j\in\mathbb{Z}}(H_{j}\phi_{\varepsilon})*b\Big\}_{\varepsilon\in\mathbb{R}^{+}}\Big)^{1/2}(x)\\
		&\quad\quad\leq\displaystyle C\Big(\mathcal{S}_{2}\Big(\Big\{\sum\limits_{j\in\mathbb{Z}}(H_{j}\phi_{\varepsilon})*b\Big\}_{\varepsilon\in\mathbb{R}^{+}}\Big)(x)+\displaystyle\lambda N^{\mathrm{dyad}}_{\lambda/3}\Big(\Big\{\sum\limits_{j\in\mathbb{Z}}(H_{j}\phi_{\varepsilon})*b\Big\}_{\varepsilon\in\mathbb{R}^{+}}\Big)^{1/2}(x)\Big)\\
		&\quad\quad\quad\leq\displaystyle C\Big(\mathcal{S}_{2}\Big(\Big\{\sum\limits_{j\in\mathbb{Z}}(H_{j}\phi_{\varepsilon})*\Big(\sum_{s\geq200}B_{j-s}\Big)\Big\}_{\varepsilon\in\mathbb{R}^{+}}\Big)(x)\\
		&\quad\quad\quad\quad\quad+\displaystyle\lambda N_{\lambda/3}\Big(\Big\{\sum\limits_{j\geq k}H_{j}*b\Big\}_{k\in\Z}\Big)^{1/2}(x)\Big)\\
		&\leq\displaystyle C\Big(\sum_{s\geq200}\mathcal{S}_{2}\Big(\Big\{\sum\limits_{j\in\mathbb{Z}}(H_{j}\phi_{\varepsilon})*B_{j-s}\Big\}_{\varepsilon\in\mathbb{R}^{+}}\Big)(x)\\
		&\qquad\qquad\qquad+\displaystyle\lambda N_{\lambda/3}\Big(\Big\{\sum\limits_{s\geq 200}\sum\limits_{j\geq k}H_{j}*B_{j-s}\Big\}_{k\in\Z}\Big)^{1/2}(x)\Big).
	\end{array}\eqno(5.3)$$
	Now we shall establish the conclusion that there exists $\delta_{3}>0$ such that
	$$\Big|\Big\{x\in\mathbb{R}^{d}:\mathcal{S}_{2}\Big(\Big\{\sum\limits_{j\in\mathbb{Z}}(H_{j}\phi_{\varepsilon})*B_{j-s}\Big\}_{\varepsilon\in\mathbb{R}^{+}}\Big)>\epsilon\alpha\Big\}\Big|
	\lesssim_d\displaystyle\frac{2^{-\delta_{3}s}}{\alpha\epsilon^{2}}\sum\limits_{j\in\mathbb{Z}}\sum_{Q\in\mathcal{Q}_{j-s}^{\alpha}}\|b_{Q}\|_{1}.\eqno(5.4)$$
	Let $H_{k,t,e}(r,\theta)=H_{k}(r\theta)(\phi({2^{-k+1-e}}t^{-1}r)-
	\phi(2^{-k-e}r))$. Then we have
	$$\begin{array}{ll}
		&\displaystyle\mathcal{S}_{2}\Big(\Big\{\sum\limits_{j\in\mathbb{Z}}(H_{j}\phi_{\varepsilon})*B_{j-s}\Big\}_{\varepsilon\in\mathbb{R}^{+}}\Big)(x)\\
		&\quad=\displaystyle\Big(\sum\limits_{k\in\mathbb{Z}}\Big(V_2\Big(\Big\{\sum\limits_{j\in\mathbb{Z}}(H_{j}\phi_\varepsilon)*B_{j-s}\Big\}_{\varepsilon\in[2^k,2^{k+1}]}\Big)(x)\Big)^{2}\Big)^{1/2}\\
		&\quad\quad\leq\displaystyle\Big(\sum\limits_{k\in\mathbb{Z}}\Big(V_2\Big(\Big\{\sum\limits_{j\in\mathbb{Z}}(H_{j}\phi_{2^{k}t}-H_{j}\phi_{2^{k+1}})*B_{j-s}\Big\}_{\varepsilon\in[1,2]}\Big)(x)\Big)^{2}\Big)^{1/2}\\
		&\quad\quad\quad=\displaystyle\Big(\sum\limits_{k\in\mathbb{Z}}\Big(V_2\Big(\Big\{\sum\limits_{j= k}^{k+2}(H_{j}\phi_{2^{k}t}-H_{j}\phi_{2^{k+1}})*B_{j-s}\Big\}_{\varepsilon\in[1,2]}\Big)(x)\Big)^{2}\Big)^{1/2}\\
		&\quad\quad\quad\quad\leq\displaystyle\sum\limits_{e=-1}^{1}\Big(\sum\limits_{k\in\mathbb{Z}}\Big(V_2\Big(\Big\{H_{k,t,e}*B_{k+e-s}\Big\}_{t\in[1,2]}\Big)(x)\Big)^{2}\Big)^{1/2}\\
		&=\displaystyle:\sum\limits_{e=-1}^{1}\mathrm{I}_{e}.
	\end{array}\eqno(5.5)$$
	We will consider the case $e=0$ and other cases are similar. From 
	\cite[(39)]{JSW} we see that
	$$\|a\|_{V_{\varrho}}\lesssim \|a\|_{\varrho}^{1-{1}/{\varrho}}\|a'\|_{\varrho}^{{1}/{\varrho}},\ \ \varrho>1.$$
	Taking $\varrho=2$, the above inequality gives that
	$$V_2(\{H_{k,t,0}*B_{k-s}\}_{t\in[1,2]})\lesssim\Big\|H_{k,t,0}*B_{k-s}\Big\|^{1/2}_{L^2([1,2])}\Big\|\frac{d}{dt}H_{k,t,0}*B_{k-s}\Big\|^{1/2}_{L^2([1,2])}.$$
	Therefore, by Cauchy--Schwarz inequality, we have
	$$\|\mathrm{I}_{0}\|_{2}\lesssim\Big\|\Big(\int_{1}^{2}\sum\limits_{k\in\mathbb{Z}}|H_{k,t,0}*B_{k-s}|^{2}
	\frac{dt}{t}\Big)^{1/2}\Big\|^{1/2}_{2}\Big\|\Big(\int_{1}^{2}\sum\limits_{k\in\mathbb{Z}}|{G}_{k,t,0}*B_{k-s}|^{2}
	\frac{dt}{t}\Big)^{1/2}\Big\|^{1/2}_{2},\eqno(5.6)$$
	where the kernel $G_{k,t,0}$ is defined by
	$$G_{k,t,0}(r,\theta)=-t\frac{d}{dt}H_{k,t,0}(r,\theta)=\Omega(\theta)r^{-d+1}\bar{\gamma}_{k}(r)(2^{k-1}t)^{-1}\phi'((2^{k-1}t)^{-1}r).$$
	Now we write
	$${H}_{j,t,0}=\sum\limits_{\nu\in \Lambda_{s}}P_{\nu}^{s}*H^{\nu,s}_{j,t,0}+\Big({H}_{j,t,0}-\sum_{\nu\in \Lambda_{s}}P_{\nu}^{s}*{H}^{\nu,s}_{j,t,0}\Big),$$
	where ${H}^{\nu,s}_{j,t,0}(x)={H}_{j,t,0}(x)\Gamma_{\nu}^{s}(x)$. Applying
	\cite[Lemma 3.3]{Lai1}, one gets that for each $\gamma\in(0,1)$,
	$$\Big\|\sum\limits_{\nu\in \Lambda_{s}}\sum\limits_{j\in\mathbb{Z}}\epsilon_{j}P_{\nu}^{s}*H^{\nu,s}_{j,t,0}*B_{j-s}\Big\|^{2}_{2}
	\lesssim_d\mathfrak{M}_{0,t,0}^{2}2^{-s\gamma}\alpha\sum\limits_{j\in\mathbb{Z}}\sum_{Q\in\mathcal{Q}_{j-s}^{\alpha}}\|b_{Q}\|_{1},$$
	where $\epsilon_{j}\in\{-1,1\}$ and
	$$\mathfrak{M}_{N,t,0}=\sup\limits_{j\in\mathbb{Z}}\sup\limits_{r>0}\sup\limits_{\theta\in\mathbb{S}^{d-1}}\sup_{0\leq l\leq N} r^{d+l}\big|\partial_{r}^{l}{H}_{j,t,0}(\cdot,\theta)\big|.$$
	By \cite[Lemma 3.4]{Lai1}, we have that for each $\varepsilon_0\in(0,1)$ and 
	$N_{1}\geq d+1$,
	$$\Big\|\sum\limits_{\nu\in \Lambda_{s}}\sum\limits_{j\in\mathbb{Z}}\epsilon_{j}(H^{\nu,s}_{j,t,0}-P_{\nu}^{s}*H^{\nu,s}_{j,t,0})*B_{j-s}\Big\|_{1}
	\lesssim_d(\mathfrak{M}_{0,t,0}2^{-s\eta_{1}}+\mathfrak{M}_{N_{1},t,0}2^{-s\eta_{2}})\sum\limits_{j\in\mathbb{Z}}\sum_{Q\in\mathcal{Q}_{j-s}^{\alpha}}\|b_{Q}\|_{1},$$
	where $\eta_{1}=1-\varepsilon_{0}$, $\eta_{2}=(\varepsilon_{0}-\gamma)N_{1}-
	d\varepsilon_{0}-d\gamma$. Applying \cite[Lemma 4.6]{Lai}, we get the trivial 
	bound
	$$\Big\|\sum\limits_{\nu\in\Lambda_{s}}\sum\limits_{j\in\mathbb{Z}}\epsilon_{j}(H^{\nu,s}_{j,t,0}-P_{\nu}^{s}*H^{\nu,s}_{j,t,0})*B_{j-s}\Big\|^{3}_{3}
	\lesssim_{d} 2^{2s\gamma(d-1)+3([\frac{d}{2}]+1)}\alpha^{2}\sum\limits_{j\in\mathbb{Z}}\sum_{Q\in\mathcal{Q}_{j-s}^{\alpha}}\|b_{Q}\|_{1}.$$
	Therefore, by the arguments of interpolation and the fact that $\mathfrak{M}_{N,t,0}
	\lesssim_{N} 1$,
	$$\Big\|\Big(\sum\limits_{j\in\mathbb{Z}}|H_{j,t,0}*B_{j-s}|^{2}\Big)^{1/2}\Big\|^{2}_{2}
	\lesssim_d(2^{-\gamma s}+2^{-{2\vartheta}_{1}s}+2^{-2\vartheta_{2} s})\alpha \sum\limits_{j\in\Z}\sum\limits_{Q\in \mathcal{Q}_{\alpha}}\|b_{Q}\|_{1},\eqno(5.7)$$
	where
	$$\vartheta_{1}=\frac{1}{4}(1-\varepsilon_{0})-\frac{3}{4}\gamma\Big(\frac{2}{3}(d-1)+[\frac{d}{2}]+1\Big),$$
	$$\vartheta_{2}=\frac{1}{4}\Big((\varepsilon_{0}-\gamma)N_{1}-(\gamma+\varepsilon_{0})d\Big)-\frac{3}{4}\gamma\Big(\frac{2}{3}(d-1)+[\frac{d}{2}]+1\Big).$$
	Now we choose $\iota\ll\gamma\ll \varepsilon_{0}\ll1$ and $N_{1}\geq d+1$ 
	such that $\vartheta_{1}$, $\vartheta_{2}>0$. Observe that
	$$\begin{array}{ll}
		&\displaystyle\Big\|\Big(\sum\limits_{j\in\mathbb{Z}}|{G}_{j,t,0}*B_{j-s}|^{2}\Big)^{1/2}\Big\|^{2}_{2}\\
		&\qquad\leq\displaystyle\sum\limits_{j\in\mathbb{Z}}\int_{\mathbb{R}^{d}}\int_{\mathbb{R}^{d}}|G_{j,t,0}*\widetilde{G}_{j,t,0}(z-y)||B_{j-s}(y)|dy|B_{j-s}(z)|dz\\
		&\qquad\qquad\lesssim_d\displaystyle (2^{j-2}t)^{-d}\sum\limits_{j\in\mathbb{Z}}\int_{\mathbb{R}^{d}}\int_{B(z,2^{j+2 }t)}|B_{j-s}(y)|dy|B_{j-s}(z)|dz\\
		&\qquad\qquad\qquad\lesssim_{d}\displaystyle(2^{j-2}t)^{-d}\sum\limits_{j\in\mathbb{Z}}\int_{\mathbb{R}^{d}}\Big(\sum\limits_{\substack{Q\in \mathcal{Q}_{\alpha}, L(Q)=j-s\\ Q\cap B(z, 2^{j+2}t)\neq\emptyset}}\int_{Q}|b_{Q}(y)|dy\Big)|B_{j-s}(z)|dz\\
		&\lesssim_{d}\displaystyle\alpha\sum\limits_{j\in\mathbb{Z}}\|B_{j-s}\|_{1}.
	\end{array}$$
	This together with (5.6) and (5.7) implies that
	$$\|\mathrm{I}_{0}\|^{2}_{2}\lesssim_d2^{-\delta_{4}s}\alpha\sum\limits_{j\in\Z}\sum\limits_{Q\in \mathcal{Q}_{j-s}^{\alpha}}\|b_{Q}\|_{1},\eqno(5.8)$$
	for some $\delta_4>0$. Similarly, one gets
	$$\|\mathrm{I}_{e}\|^{2}_{2}\lesssim_d2^{-\delta_{5}s}\alpha\sum\limits_{j\in\Z}\sum\limits_{Q\in \mathcal{Q}_{j-s}^{\alpha}}\|b_{Q}\|_{1},\ \ e=1,\,-1.\eqno(5.9)$$
	for some $\delta_5>0$. Now set $\delta_{3}=\min\{\delta, \delta_{4},\delta_{5}\}$. Then (5.4) follows from (5.5), (5.8), (5.9) and 
	Chebyshev inequality.
	
	In addition, it is not difficult to verify that for each non-negative 
	function $v$,
	$$\Big\|\mathcal{S}_{2}\Big(\Big\{\sum\limits_{j\in\mathbb{Z}}(H_{j}\phi_{\varepsilon})*B_{j-s}\Big\}_{\varepsilon\in\mathbb{R}^{+}}\Big)\Big\|_{L^{1}(v)}
	\lesssim_d\sum\limits_{j\in\mathbb{Z}}\sum_{Q\in\mathcal{Q}_{j-s}^{\alpha}}\|b_{Q}\|_{1}\inf_{z\in Q}Mv(z).\eqno(5.10)$$
	Let $\mathcal{E}^{s,1}_{\epsilon\alpha}$ be given in the proof of Theorem 
	\ref{thm1.2}. Set
	$$\mathcal{E}_{\epsilon\alpha}^{s,3}=\Big\{x\in\mathbb{R}^{d}:\mathcal{S}_{2}\Big(\Big\{\sum\limits_{j\in\mathbb{Z}}(H_{j}\phi_{\varepsilon})*B_{j-s}\Big\}_{\varepsilon\in\mathbb{R}^{+}}\Big)(x)>\epsilon\alpha\Big\},$$
	$$\mathcal{E}_{\epsilon\alpha/2}^{\imath,u,s,3}=\Big\{x\in\mathbb{R}^{d}:\mathcal{S}_{2}\Big(\Big\{\sum\limits_{j\in\mathbb{Z}}(H_{j}\phi_{\varepsilon})
	*B^{\imath,\alpha,s}_{j-s,u}\Big\}_{\varepsilon\in\mathbb{R}^{+}}\Big)(x)>\epsilon\alpha/2\Big\},\ \imath=1,2,$$
	where $B^{1,\alpha,s}_{j-s,u}$, $B^{2,\alpha,s}_{j-s,u}$, $\mathcal{Q}^{1,\alpha,
		s}_{j-s,u}$ and $\mathcal{Q}^{2,\alpha,s}_{j-s,u}$ are defined analogously to that in the proof of Theorem \ref{thm1.1} with $\delta$ replaced by $\delta_3$. Hence 
	  $\mathcal{E}_{\epsilon
		\alpha}^{s,3}\subset \mathcal{E}_{\epsilon\alpha/2}^{1,u,s,3}\cup
	\mathcal{E}_{\epsilon\alpha/2}^{2,u,s,3}$. In view of (5.4) and (5.10), one gets
	$$\begin{array}{ll}
		&\displaystyle\int_{\mathcal{E}_{\epsilon\alpha}^{s,3}}\min\{v(x),u\} dx\lesssim\displaystyle\int_{\mathcal{E}_{\epsilon\alpha/2}^{1,u,s,3}}v(x)dx+u\int_{\mathcal{E}_{\epsilon\alpha/2}^{2,u,s,3}}dx\\
		&\qquad\qquad\qquad\qquad\lesssim_d\displaystyle\epsilon^{-2}\sum\limits_{Q\in \mathcal{Q}_{\alpha}}|Q|\min\Big\{u2^{-\delta_{3} s},\inf_{z\in Q}Mv(z)\Big\}.
	\end{array}$$
	Applying (3.6), we have
	$$\int_{\mathcal{E}_{\epsilon\alpha}^{s,3}}v(x)^{\theta}dx=\frac{1}{\epsilon^{2}}\sum_{Q\in \mathcal{Q}_{\alpha}}|Q|2^{-\delta_{3}(1-\theta)s}(\inf_{z\in Q}Mv(z))^{\theta}.\eqno(5.11)$$
	 By (5.11) with $v=w^{1/\theta}$, we obtain
	$$w(\mathcal{E}^{s,3}_{\epsilon\alpha})\lesssim_d\displaystyle\frac{2^{-\delta_{3}(1-\theta)s}}{\epsilon^{2}}\sum\limits_{\substack{Q\in \mathcal{Q}_{\alpha}}}|Q|\inf_{z\in Q}M_{\frac{1}{\theta}}w(z)
	\lesssim_d \frac{2^{-\delta_{3}(1-\theta)s}}{\epsilon^{2}\alpha}  \|f\|_{L^1(M_{\frac{1}{\theta}}w)}.\eqno(5.12)$$
	On the other hand, one gets
	$$\lambda N_{\lambda/3}\Big(\Big\{\sum\limits_{s\geq 200}\sum\limits_{j\geq k}H_{j}*B_{j-s}\Big\}_{k\in\Z}\Big)^{1/2}(x)
	\lesssim \sum\limits_{s\geq 200}V_2\Big(\Big\{\sum\limits_{j\geq k}H_{j}*B_{j-s}\Big\}_{k\in\mathbb{Z}}\Big)(x).$$
	Let $\theta$, $s_0(\delta_{3})$ and $\epsilon_{s}$ be given in the proof of (1.14) with $\delta$ replaced by $\delta_{3}$. By the piegonhole principle, we have the inclusion that
	$$\begin{array}{ll}
		&\displaystyle\Big\{x\notin E:\lambda N_{\lambda/3}\Big(\Big\{\sum\limits_{s\geq 200}\sum\limits_{j\geq k}H_{j}*B_{j-s}\Big\}_{k \in\Z}\Big)^{1/2}(x)>12\alpha\Big\}\\
		&\qquad\displaystyle\subset\Big\{x\in\mathbb{R}^{d}:\sum\limits_{s=200}^{s_{0}(\delta_{3})}V_2\Big(\Big\{\sum\limits_{j\geq k}H_{j}*B_{j-s}\Big\}_{k\in\mathbb{Z}}\Big)(x)>\alpha/2\Big\}\\
		&\displaystyle\qquad\qquad\cup\bigcup_{s\geq s_{0}(\delta_{3})+1}\Big\{x\in\mathbb{R}^{d}:V_2\Big(\Big\{\sum\limits_{j\geq k}H_{j}*B_{j-s}\Big\}_{k\in\mathbb{Z}}\Big)(x)>\epsilon_{s}\alpha\Big\}\\
		&=:\displaystyle\mathcal{G}_{\alpha/2}^{s,1}\cup \bigcup_{s\geq s_{0}(\delta_{3})+1} \mathcal{E}^{s,1}_{\epsilon_{s}\alpha}.
	\end{array}\eqno(5.13)$$
Similarly, one gets
	$$\begin{array}{ll}
		&\displaystyle\Big\{x\notin E :\mathcal{S}_2\Big(\Big\{\sum\limits_{s\geq 200}\sum\limits_{j\in\mathbb{Z}}(H_{j}\phi_{\varepsilon})*B_{j-s}\Big\}_{\varepsilon\in\mathbb{R}^{+}}\Big)(x)>\alpha\Big\}\\
		&\qquad\displaystyle\subset\Big\{x\in\mathbb{R}^{d}:\sum\limits_{s=200}^{s_{0}(\delta_{3})}\mathcal{S}_2\Big(\Big\{\sum\limits_{j\in\mathbb{Z}}(H_{j}\phi_{\varepsilon})*B_{j-s}\Big\}_{\varepsilon\in\mathbb{R}^{+}}\Big)(x)>\alpha/2\Big\}\\
		&\qquad\qquad\displaystyle\cup\bigcup_{s\geq s_{0}(\delta_{3})+1}\Big\{x\in\mathbb{R}^{d}:\mathcal{S}_2\Big(\Big\{\sum\limits_{j\in\mathbb{Z}}(H_{j}\phi_{\varepsilon})*B_{j-s}\Big\}_{\varepsilon\in\mathbb{R}^{+}}\Big)(x)>\epsilon_{s}\alpha\Big\}\\
		&=:\displaystyle\mathcal{G}_{\alpha/2}^{s,3}\cup \bigcup_{s\geq s_{0}(\delta_{3})+1} \mathcal{E}^{s,3}_{\epsilon_{s}\alpha}.
	\end{array}\eqno(5.14)$$
	
	Therefore, by (4.11), (5.3), (5.12), (5.13), (5.14), the Chebyshev inequality 
	and the fact that $M_{{\frac{1}{\theta}}}w\leq 2Mw\lesssim [w]_{A_{1}}w$, we 
obtain
	$$\begin{array}{ll}
		&\displaystyle w\Big(\Big\{x\notin E:\lambda \mathcal{N}_{1.001\lambda}\Big(\Big\{\sum\limits_{j\in\mathbb{Z}}(H_{j}\phi_{\varepsilon})*b\Big\}_{\varepsilon\in\mathbb{R}^{+}}\Big)^{1/2}(x)>200\alpha\Big\}\Big)\\
		&\quad\leq\displaystyle w\Big(\Big\{x\notin E:\mathcal{S}_{2}\Big(\Big\{\sum\limits_{j\in\mathbb{Z}}(H_{j}\phi_{\varepsilon})*B_{j-s}\Big\}_{\varepsilon\in\mathbb{R}^{+}}\Big)(x)>\alpha\Big\}\Big)\\
		&\quad\quad+\displaystyle w\Big(\Big\{x\notin E:\lambda N_{\lambda/3}\Big(\Big\{\sum\limits_{s\geq 200}\sum\limits_{j\geq k}H_{j}*B_{j-s}\Big\}_{\varepsilon\in\mathbb{R}^{+}}\Big)^{1/2}(x)>12\alpha\Big\}\Big)\\
		&\quad\quad\quad\leq\displaystyle w(\mathcal{G}_{\alpha/2}^{s,1})+w(\mathcal{G}_{\alpha/2}^{s,3})+\sum\limits_{s\geq s_{0}+1}w(\mathcal{E}^{s,1}_{\epsilon_{s}\alpha})
		+\sum\limits_{s\geq s_{0}+1}w(\mathcal{E}^{s,3}_{\epsilon_{s}\alpha})\\
		&\quad\quad\quad\quad\lesssim_d\displaystyle\alpha^{-1}\Big(s_{0}(\delta_{2})+\sum\limits_{s\geq s_{0}(\delta_{3})+1}(\epsilon_{s})^{-2}2^{-\delta_{3}(1-\theta)s}\Big)\|f\|_{L^1(M_{{\frac{1}{\theta}}}w)}\\
		&\lesssim_d[w]_{A_{1}}[w]_{A_{\infty}}\log_{2}([w]_{A_{\infty}}+1)\alpha^{-1}\|f\|_{L^1(w)},
	\end{array}$$
	which gives (5.2) and completes the proof of Theorem \ref{thm1.3}. $\hfill\Box$
	
	\medskip

	\quad\hspace{-20pt}{\bf Acknowledgements.}
	Ting Chen was supported by the National Natural Science Foundation of China
	(Grant No. 12271267) and the Fundamental Research Funds for the Central
	Universities. Feng Liu was supported by the Natural Science Foundation of
	Shandong Province (Grant No. ZR2023MA022) and the National Natural Science
	Foundation of China (Grant No. 12326371).
	
	\medskip

	\quad\hspace{-20pt}{\bf Data Availability}
	Data sharing is not applicable to this article as no datasets were generated
	or analysed during the current study.
	
	\bigskip
	
	\quad\hspace{-20pt}{\bf Declarations}
	
	\bigskip
	\quad\hspace{-20pt}{\bf Conflict of interest} The authors declare no conflict of interest.
	
	\medskip
	\quad\hspace{-20pt}{\bf Ethical Approval} The declaration for ethical approval is not applicable.

\end{document}